\def\sideremark#1{\ifvmode\leavevmode\fi\vadjust{\vbox to0pt{\vss
 \hbox to 0pt{\hskip\hsize\hskip1em
\vbox{\hsize2cm\small\raggedright\pretolerance10000
 \noindent #1\hfill}\hss}\vbox to8pt{\vfil}\vss}}}
\setlist[itemize]{leftmargin=*}
\DeclareMathAlphabet{\mathpzc}{OT1}{pzc}{m}{it}
\newcommand{\p}{\mathrm{\bf p}}
\newcommand{\R}{\mathbb{R}}
\newcommand{\PP}{\mathbb{P}}
\newcommand{\N}{\mathbb{N}}
\newcommand{\A}{\mathcal{A}}
\newcommand{\D}{\mathcal{D}}
\newcommand{\One}{\mathds{1}}
\newcommand{\Lp}{L_{u}}
\newcommand{\Lse}{\mathcal{L}_{s,\p}^\epsilon[u]}
\newcommand{\Ls}{\mathcal{L}_{s,\p}[u]}
\newcommand{\tLs}{\tilde{\mathcal{L}}_{s,\p}[u]}
\newcommand{\dmN}{\;\mathrm{d}\mu_s^N(z)}
\def\endproof{\hspace*{\fill}\mbox{\ \rule{.1in}{.1in}}\medskip }
\newcommand{\one}{\mathds{1}}
\renewcommand{\epsilon}{\varepsilon}
\numberwithin{equation}{section}
\theoremstyle{plain}
\newtheorem{theorem}{Theorem}[section]
\newtheorem{lemma}[theorem]{Lemma}
\newtheorem{corollary}[theorem]{Corollary}
\newtheorem{proposition}[theorem]{Proposition}
\theoremstyle{definition}
\newtheorem{remark}[theorem]{Remark}
\newtheorem{definition}[theorem]{Definition}
\begin{document}
\title[Tug-of-War for fractional $\p$-Laplacian]
{Non-local tug-of-war with noise for the geometric fractional $\p$-Laplacian} 
\author{Marta Lewicka}
\address{M. Lewicka: University of Pittsburgh, Department of
  Mathematics, 139 University Place, Pittsburgh, PA 15260} 
\email{lewicka@pitt.edu} 
	
\thanks{Marta Lewicka was supported by NSF grants DMS-1613153 and DMS-2006439.}

\begin{abstract}
This paper concerns the fractional $\p$-Laplace
operator $\Delta_\p^s$ in non-divergence form, which has been introduced in \cite{BCF2}. 
For any $\p\in [2,\infty)$ and $s\in (\frac{1}{2},1)$ we first define
two families of non-local, non-linear averaging operators,
parametrised by $\epsilon$ and defined for all
bounded, Borel functions $u:\R^N\to \R$. We prove that $\Delta_\p^s u(x)$
emerges as the $\epsilon^{2s}$-order coefficient in the expansion of the deviation
of each $\epsilon$-average from the value $u(x)$, in the limit of the domain of
averaging exhausting an appropriate cone in $\R^N$ at the rate $\epsilon\to 0$.

Second, we consider the $\epsilon$-dynamic
programming principles modeled on the first average,  
and show that their solutions converge uniformly as $\epsilon\to 0$, to viscosity
solutions of the homogeneous non-local Dirichlet problem for
$\Delta_\p^s$, when posed in a domain $\D$ that satisfies the external cone condition and
subject to bounded, uniformly continuous data on $\R^N\setminus \D$.

Finally, we interpret such $\epsilon$-approximating solutions as values to the 
non-local Tug-of-War game with noise. In this game, players choose
directions while the game position is updated randomly within the
infinite cone that aligns with the specified direction, whose
aperture angle depends on $\p$ and $N$, and whose $\epsilon$-tip has been removed.
\end{abstract}

\maketitle

\section{Introduction}

This paper concerns a version of the fractional $\p$-Laplace
operator, which has been introduced in \cite{BCF2}. More precisely, for
$\p\geq 2$, $s\in (\frac{1}{2},1)$, and for a given bounded function
$u:\R^N\to\R$ that is of regularity $C^{1,1}(x)$ with $\nabla u(x)\neq 0$, one defines:
\begin{equation}\label{dps}
\Delta_\p^s u(x) \doteq C_{N,\p,s} \int_{T_\p^{0,\infty}(\frac{\nabla u(x)}{|\nabla
  u(x)|})}\frac{u(x+z)+u(x-z) - 2u(x)}{|z|^{N+2s}}\;\mbox{d}z.
\end{equation}
Above, $C_{N,\p,s}$ is a specific constant depending on $N,\p,s$, whereas the integration occurs on the infinite cone
$T_\p^{0,\infty}(\frac{\nabla u(x)}{|\nabla u(x)|})\subset\R^N$ whose
centerline is aligned with the vector $\frac{\nabla u(x)}{|\nabla u(x)|}$ and
whose aperture angle $\alpha$ depends on $N,\p$. In particular, for $\p=2$
we have $\alpha=\frac{\pi}{2}$ so that the said cone becomes the
half-space and (\ref{dps}) is consistent with the familiar formula: $-(-\Delta)^s u(x) = C_{N,s}\int_{\R^N}
\frac{u(z)- u(x)}{|x-z|^{N+2s}}\;\mbox{d}z$.
On the other hand, when $\p\to\infty$  then $\alpha\to 0$ and the
cone reduces to a line, consistently with the parallel
definition for fractional infinity Laplacian $\Delta_\infty^su(x)$ in \cite{BCF}.

As pointed out in \cite{BCF2}, definition (\ref{dps}) arises naturally
when extending the game-theoretical interpretation to the non-local, non-divergence
version of the classical $\p$-Laplace operator $\Delta_\p$. The
interpretation for $\Delta_\p$ 
has been originally put forward in \cite{PS} and it is based on the Tug-of-War game with random
noise, which in its turn can be seen as the interpolation between the pure Tug-of-War developed for the
$\infty$-Laplacian $\Delta_\infty$ in \cite{PSSW}, and the 
random walk description of the linear harmonic operator $\Delta$, which is classical.
In order to emphasise the importance of the choice of the integration cone $T_\p^{0,\infty}$ and
to distinguish the formula (\ref{dps}) from the divergence form of the
fractional $\p$-Laplacian arising through the Euler-Lagrange equations
of an appropriate non-local energy \cite{CLM}, we call the operator
$\Delta_\p^s$ above the ``geometric'' $\p$-$s$-Laplacian.

\smallskip

The purpose of this paper is to rigorously define the non-local
version of the noisy Tug-of-War game and prove that its
values converge to viscosity solutions of the
Dirichlet problem for $\Delta_\p^s$, posed on a sufficiently regular domain $\D\subset\R^N$:
\begin{equation}\label{ndci}
\Delta_\p^su = 0 ~\mbox{ in }\; \D,\qquad u=F ~\mbox{ in }\; \R^N\setminus\D.
\end{equation}
We remark that condition $\p\geq 2$ which we assume throughout, can be relaxed to
cover the full range $\p\in (1,\infty)$, by replacing the cone $T_\p$
with the complement of its doubled version for $\p\in (1,2)$. This
construction has been proposed in \cite[Remark 4.5]{BCF2}.
We now describe our main results. The said game will be modeled on
the dynamic programming principle that involves an appropriate
average, in whose asymptotic expansion the operator $\Delta_\p^s$
arises as the highest order term, in the vanishing limit of the expansion
parameter $\epsilon$. Hence, our first set of results develops such asymptotic
expansions, reminiscent of the well known local and linear formula:
\begin{equation}\label{aed2}
\fint_{B_\epsilon(x)}u(y)\;\mbox{d}y = u(x) +
\frac{\epsilon^2}{2(N+2)}\Delta u(x) + o(\epsilon^2) \qquad \mbox{ as }\; \epsilon\to 0+. 
\end{equation}

\smallskip

\subsection{Asymptotic expansions.} 
More precisely, we define the following non-local and non-linear averaging operator:
$$\A_\epsilon u(x) \doteq \frac{1}{2} \Big(\sup_{|y|=1}\fint_{T_\p^{\epsilon, \infty}(y)}
\frac{u(x+z)}{|z|^{N+2s}}\;\mbox{d}z + \inf_{|y|=1}\fint_{T_\p^{\epsilon, \infty}(y)}
\frac{u(x+z)}{|z|^{N+2s}} \;\mbox{d}z \Big),$$
where the integration takes place on the truncated infinite cones
$T_\p^{\epsilon, \infty}(y)=T_\p^{0, \infty}(y)\setminus
B_\epsilon(0)$, each oriented along its indicated unit direction vector $y$ and
having the aperture angle $\alpha$ as in (\ref{dps}). The integral averages $\fint$ are taken with respect to the
singular measure $|z|^{-N-2s}\,\mbox{d}z$. 
Note that $\mathcal{A}_\epsilon u$ is well defined for any bounded, Borel
function $u$, and in particular it does not necessitate the existence or the knowledge
of $\nabla u(x)$, which was essential in (\ref{dps}). 
The form of $\mathcal{A}_\epsilon$ is justified by the following expansion,
which we prove to be valid for functions $u$ that are $C^2$ in the vicinity of
a given $x\in\R^N$ with $\nabla u(x)\neq 0$, and uniformly continuous away from $x$:
\begin{equation}\label{raz}
\begin{split}
\A_\epsilon u(x) = u(x) + \frac{s}{(2-2s)(N+\p-2)} \epsilon^{2s}\cdot \Delta_\p^s u (x)
+ o(\epsilon^{2s}) \qquad \mbox{ as }\; \epsilon\to 0+. 
\end{split}
\end{equation}

We also propose another nonlinear average of a combined local - non-local nature:
\begin{equation*}
\begin{split}
\bar\A_\epsilon u(x)\doteq &\; \frac{(1-s)(N+\p-2)}{N+\p-2+2s}\cdot
\A_\epsilon u(x) \\ & + \frac{s(N+2)}{N+\p-2+2s}\cdot \fint_{B_\epsilon (x)} u(y)\;\mathrm{d}y
+ \frac{s(\p-2)}{N+\p-2+2s}\cdot\frac{1}{2} \Big(\sup_{B_\epsilon (x)}u + \inf_{B_\epsilon (x)}u \Big).
\end{split}
\end{equation*}
Note that the three positive multiplication factors above add up to
$1$. We prove that the result as in (\ref{raz}) similarly holds for $\bar{\mathcal{A}}_\epsilon$:
\begin{equation}\label{dwa}
\bar\A_\epsilon u(x) = u(x) + \frac{s}{2(N+\p-2+2s)}\epsilon^{2s}\cdot
\Delta_\p^s u(x) + o(\epsilon^{2s})\qquad \mbox{ as }\; \epsilon\to 0+.
\end{equation}
Expansion (\ref{dwa})  is superior to (\ref{raz}), because
the error quantity $o(\epsilon^{2s})$ in (\ref{raz}), which we make precise in the paper,
blows up to $\infty$ as $s\to 1-$, whereas $o(\epsilon^{2s})$ in (\ref{dwa}) is
uniform in the whole considered range $s\in
(\frac{1}{2},1)$. When $s\to 1$, the expansion (\ref{dwa}) becomes:
\begin{equation}\label{aed3}
\begin{split}
\frac{N+2}{N+\p}\cdot \fint_{B_\epsilon (x)}  u(y)\;\mathrm{d}y & + \frac{\p-2}{N+\p}\cdot\frac{1}{2}
\big(\sup_{B_\epsilon (x)}u +  \inf_{B_\epsilon (x)}u \big) \\ & = u(x) + \frac{\epsilon^2}{2(N+\p)}
|\nabla u(x)|^{2-\p}\Delta_\p u(x) + o(\epsilon^{2}),
\end{split}
\end{equation}
which in turn yields  (\ref{aed2}) for $\p=2$. We recall in passing that
(\ref{aed3}) is a convex combination of (\ref{aed2}) and the asymptotic
expansion for the infinity Laplacian in:
\begin{equation*}
\frac{1}{2} \big(\sup_{B_\epsilon (x)}u  + \inf_{B_\epsilon (x)}u \big)
= u(x) + \frac{\epsilon^2}{2}\Delta_\infty u(x) + o(\epsilon^2) \qquad \mbox{ as }\; \epsilon\to 0+, 
\end{equation*}
with the weights corresponding to the following identity for the classical
$\p$-Laplacian in non-divergence form: $|\nabla u|^{2-\p}\Delta_\p u = \Delta u + (\p-2)\Delta_\infty u$.

Asymptotic expansions for gradient-dependent operators have been
recently discussed in \cite{BS1, BS}. However, the averages in there depended on
$\nabla u(x)$, which is a drawback in the context of our further
applications, based on solutions to the truncated expansions
\ref{dppe}. We seek these solutions among the natural class of Borel 
functions. Indeed, they are at most continuous and become
higher regular only generically and in the limit as $\epsilon\to 0$,
so no notion of pointwise gradient may be feasible in the definition of an average. 

The idea of the local correction in the average $\bar {\mathcal{A}}_\epsilon$, and
of the expansion (\ref{raz}) with no reference to the gradient, first
appeared in \cite{EDL} in the context of the fractional
$\infty$-Laplacian $\Delta^s_\infty$. We also observe that the case
$\p=\infty$ where $\alpha=0$ in $T_\p^{0,
\infty}$, is independent and cannot be deduced from the present work. 
Expansion (\ref{aed3}) when $\Delta_\p u=0$, and the related
characterisation of $\p$-harmonic functions in the viscosity sense,
have been studied  in \cite{MPR}. This expansion informs a game-theoretical interpretation
of the $\p$-Laplacian (alternative to the one originally
carried out in \cite{PS}) only for $\p\geq 2$, when the weight coefficients
are nonnegative. Another expansion, yielding a family of Tug-of-War games in the
whole range $\p\in (1,\infty)$, was proposed in \cite{L1}. 

\smallskip

\subsection{Dynamic programming and Tug-of-War.} 
The second set of results in this paper concerns the operator $\mathcal{A}_\epsilon$
and the truncated version of the expansion (\ref{raz}), 
aiming at an approximation scheme for solutions to (\ref{ndci}). More
precisely, given an open bounded domain $\D\subset \R^N$ and a bounded
Borel data function $F:\R^N\setminus \D\to \R$, we consider the
following family of non-local averaging problems:
\begin{equation*}\tag*{${\mathrm{(DPP)}}_\epsilon$}
u_\epsilon(x) = \left\{\begin{array}{ll} \A_\epsilon u_\epsilon(x) &
    ~~\mbox{for } x\in\D\\ F(x) &  ~~\mbox{for } x\in\R^N\setminus \D.
\end{array}\right.
\end{equation*}
We prove that for every $\epsilon>0$ there exists exactly one
$u_\epsilon$ satisfying the above, which is bounded Borel on $\R^N$ (and continuous in $\D$).
We then show, for $\D$ satisfying the exterior cone condition
and for uniformly continuous $F$, that any sequence
$\{u_\epsilon\}_{\epsilon\to 0}$ has a further subsequence  converging
uniformly in $\R^N$ to a continuous limit $u$ that is a viscosity solution to (\ref{ndci}).
To this end, each $u_\epsilon(x)$ is shown to be the value of
the following zero-sum two-players game, which is a non-local version
of the Tug-of-War with noise introduced in \cite{PS}. 

In this game, each Player chooses a unit direction vector
according to their own strategy, based on the knowledge of all prior moves and
random outcomes.  With equal probabilities, direction from Player 1 or
Player 2 is picked; this resulting direction is called $y$. The
current game position $x_n$ is then updated to a next position 
$x_{n+1}$ within the shifted and truncated cone $x_n+T_\p^{\epsilon,\infty}(y)$,
randomly according to the probability-normalisation of the measure 
$|z|^{-N-2s}\mathrm{d}z$ on $T_\p^{\epsilon,\infty}(y)$. Such
process, started at a point $x_0\in\R^N$ is stopped the first time
when $x_n\not\in\D$, whereas Player 1 collects from their opponent the
payoff given by the value $F(x_n)$. We show that the expected value of the
payoff, under condition that both Players play optimally, has the
min-max property, yielding the solution $u_\epsilon$ to the
dynamic programming principle \ref{dppe}. 

Convergence as $\epsilon\to 0$ is obtained by showing the approximate
equicontinuity of the family $\{u_\epsilon\}_{\epsilon\to 0}$, for
which the sufficient condition is expressed via ``game-regularity'' of
the boundary points. This general condition, implied in particular 
by the exterior cone condition on $\partial\D$,
is similar in spirit to the celebrated Doob's boundary regularity criterion for Brownian motion.
The order of arguments follows then a general program in the context
of Tug-of-War games (see a recent textbook \cite{L}), which has been put forward in
\cite{PSSW} and which has so far yielded results for
$\p$-Laplacian, obstacle problems, subriemannian geometries and
time-dependent problems. The fact that this program can be carried out
in the present non-local setting, is not obvious, and it is another
main result of this work. 

\smallskip

\subsection{Outline of the paper.} 
We set the notation and introduce the main integral operators in
section \ref{sec2}. The non-local asymptotic expansions (\ref{raz}) and
(\ref{dwa}), together with precise bounds on their error terms,
are  proved in sections \ref{sec3} and \ref{sec4}, respectively. The
dynamic programming principles \ref{dppe} are discussed in
section \ref{sec_dppe}. The fact that the uniform
limits of their solutions $\{u_\epsilon\}_{\epsilon\to 0}$ are automatically
viscosity solutions to (\ref{ndci}), is shown in section
\ref{sec_visc}. The non-local Tug-of-War game is defined and proved to
yield solutions $u_\epsilon$ in section
\ref{sec_game}. Proofs of the asymptotic equicontinuity and game-regularity are carried out in sections
\ref{sec_ft} and \ref{sec_convergence}, where we rely on further
analysis of a barrier function from \cite{BCF2}. Finally, in Appendix
\ref{appendix} we prove uniqueness of viscosity solutions to
(\ref{ndci}) under a more restrictive assumption which necessitates
extending $\Delta_\p^s$ to include the case $\nabla u(x)=0$.
It is not clear if solutions to (\ref{ndci}) as posed originally, are unique.

\section{The fractional quotients and the fractional $\p$-Laplacian}\label{sec2}
 
We consider the following measure on the Borel subsets of $\R^N$:
$$\mathrm{d}\mu_s^N(z) \doteq \frac{C(N,s)}{|z|^{N+2s}}\;\mbox{d}z \quad \mbox{ where }\;
C(N,s)=\frac{4^s
  s\Gamma\big(\frac{N}{2}+s\big)}{\pi^{N/2}\Gamma\big(1-s\big)} =
\Big(\int_{\R^N}\frac{1-\cos \langle z, e_1\rangle}{|z|^{N+2s}}\;\mbox{d}z\Big)^{-1},$$
where the exponent $s$ in this paper is assumed to belong to the range:
$$s\in \big(\frac{1}{2},1\big).$$
One can show \cite{hitch} that $C(N,s) = s(1-s) c_{N,s}$ where $c_{N,s}$ is bounded and positive uniformly in $s$.
The role of the normalizing constant $C(N,s)$ is to ensure that the 
operator $-(-\Delta)^s$, given by: $-(-\Delta)^su(x) \doteq
-\int_{\R^N} u(x+z) + u(x-z) -2u(x) \dmN$, 
is a pseudo-differential operator with symbol $|\xi|^{2s}$. 

\begin{definition}\label{def_cone}
Fix $\p\in [2,\infty)$. We define the infinite cone $T_\p$ and the spherical cup $A_\p$:
$$T_\p\doteq \big\{z\in\R^N;~   \angle (e_1,z)<\alpha_\p \big\}, \qquad A_\p \doteq T_\p\cap \{|z|=1\},$$ 
where $\alpha_\p\in (0,\frac{\pi}{2}]$ is the angle such that:
\begin{equation}\label{Ap}
\p-1 = \frac{\int_{A_\p} \langle z,e_1\rangle^2
  \;\mbox{d}\sigma(z)}{\int_{A_\p} \langle z,e_2\rangle^2 \;\mbox{d}\sigma(z)}.
\end{equation}
For every $0\leq a \leq b\leq\infty$ and $|y|=1$,  we also have the
truncated cones:
$$T_\p^{a,b}(y)\doteq \big\{z\in\R^N;~ \angle (y,z)<\alpha_\p~~\mbox{
  and }~~ a<|z|<b\big\}, \qquad T_\p^{a,b}\doteq T_\p^{a,b}(e_1).$$  
Further, for two unit vectors $y\neq\tilde y$ we define the
rotation $R_{\tilde y, y}\in SO(N)$ as the unique orientation
preserving rotation, in plane spanned by $y, \tilde y$, and
such that $R_{\tilde y, y}y=\tilde y$. When $y=\tilde y$, we set: $R_{y,y}=Id_N$.
Note that: $T_\p^{a,b}(y) = R_{y,e_1} T_\p^{a,b}$ and $T_\p = T_\p^{0,\infty} =T_\p^{0,\infty}(e_1)$.
\end{definition}

The well posedness of this definition and its rationale will be explained
in Lemma \ref{lem_Tp}.

\smallskip

\begin{definition}\label{def_operators}
Fix an exponent $\p\in [2,\infty)$. Given a bounded, Borel function $u:\R^N\to\R$,
we define the following family of integral operators, parametrised by $\epsilon>0$:
\begin{equation*}
\Lse (x)  \doteq \sup_{|y|=1}\int_{T_\p^{\epsilon, \infty}(y)} u(x+z) - u(x) \dmN + 
\inf_{| y|=1} \int_{T_\p^{\epsilon, \infty}(y)} u(x+z) - u(x) \dmN.
\end{equation*}
When additionally $u\in C^{1,1}(x)$ and the corresponding gradient-like  vector $p_x\neq 0$, we have:
$$\Ls (x)  \doteq\int_{T_\p^{0,\infty}(\frac{p_x}{|p_x|})} \Lp(x,z,z)\dmN,$$
where for each $x, z, \tilde z\in\R^N$ we set:
$$\Lp(x, z, \tilde z) \doteq u(x+z) + u(x-\tilde z) - 2u(x).$$ 
\end{definition}

\smallskip

\begin{remark}
Recall that $u\in C^{1,1}(x)$ provided that there are $p_x\in \R^N$ and $C_x,r_x>0$ with:
\begin{equation}\label{C11}
\big|u(x+z) - u(x) - \langle p_x, z\rangle\big|\leq C_x |z|^2
\qquad\mbox{for all }\; |z|<r_x.
\end{equation}
One immediate consequence of (\ref{C11}) is that:
\begin{equation}\label{C2}
\big|\Lp (x,z,\tilde z) - \langle p_x, z-\tilde z\rangle \big|\leq C_x
\big(|z|^2 + |\tilde z|^2\big) \qquad\mbox{for all }\; |z|, |\tilde z|<r_x.
\end{equation}
Also, when $u\in C^2(\bar B_{r_x})$, where 
$B_{r_x}$ denotes the open ball centered at $x$ and with 
radius $r_x$, then condition (\ref{C11}) holds automatically with
$p_x = \nabla u(x)$ and $C_x = \frac{1}{2}\|\nabla^2u\|_{L^\infty(B_{r_x})}$. 
\end{remark}

\medskip

Since $\mu_s^N(T_\p^{\epsilon, \infty})<\infty$, each integral
$\int_{T_\p^{\epsilon, \infty}(y)}u(x+z)\dmN$ and consequently 
$\Lse (x)$ are well defined and finite for any bounded, Borel $u$. On the other hand,
$\mu_s^N(T_\p)=\infty$, so a corresponding formulation $\mathcal{L}_{s,\p}^0$
is in general not valid. We now observe:

\begin{proposition}\label{lem1}
Let $u:\R^N\to \R$ be a bounded, Borel function. Then: 
$$|\Lse (x)|\leq 2  C(N,s) |A_\p|\cdot \frac{\|u\|_{L^\infty}}{s \epsilon^{2s}} \quad\mbox{for all } x\in\R^N.$$
If moreover $u\in C^{1,1}(x)$, then $\Lse (x)$ are uniformly bounded in $\epsilon$:
\begin{equation*}
|\Lse (x)| \leq   C(N,s) |A_\p|\cdot \Big(\frac{C_x r_x^{2-2s}}{1-s} 
+ \frac{2\|u\|_{L^\infty}}{s r_x^{2s}}\Big).
\end{equation*}
When $p_x\neq 0$ then the same bound above holds for $\Ls(x)$.
\end{proposition}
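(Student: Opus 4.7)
The plan is to prove the three bounds in order. The first bound is immediate: in polar coordinates $z=r\omega$ we compute
\[
\mu_s^N\bigl(T_\p^{\epsilon,\infty}(y)\bigr) = C(N,s)|A_\p|\int_\epsilon^\infty r^{-2s-1}\,\mathrm{d}r = \frac{C(N,s)|A_\p|}{2s\epsilon^{2s}},
\]
and inserting the crude estimate $|u(x+z)-u(x)|\le 2\|u\|_{L^\infty}$ into each of the two integrals comprising $\Lse(x)$ yields the claimed $\epsilon^{-2s}$ bound.

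The second bound is substantially more delicate, and here lies the main obstacle. For a single fixed $y$ the integrand in
\[
I(y)\;\doteq\;\int_{T_\p^{\epsilon,\infty}(y)} \bigl(u(x+z)-u(x)\bigr)\dmN
\]
carries, via the $C^{1,1}(x)$ expansion (\ref{C11}), a linear term $\langle p_x,z\rangle$ whose integral against the singular measure behaves like $\epsilon^{1-2s}$ and hence diverges as $\epsilon\to 0+$ (since $s>\frac12$). Any bound uniform in $\epsilon$ must therefore rely on a cancellation between the sup-piece and the inf-piece of the defining expression. The key trick I would use is the sandwich
\[
\inf_{|y|=1}\bigl(I(y)+I(-y)\bigr)\;\le\;\Lse(x)\;\le\;\sup_{|y|=1}\bigl(I(y)+I(-y)\bigr),
\]
obtained by picking near-extremisers $y^\ast, y_\ast$ of $I$ and using $\inf_y I(y)\le I(-y^\ast)$ and $\sup_y I(y)\ge I(-y_\ast)$. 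After the change of variables $z\mapsto -z$, which maps $T_\p^{\epsilon,\infty}(-y)$ onto $T_\p^{\epsilon,\infty}(y)$ and preserves $\dmN$, the symmetrised integrand becomes precisely $\Lp(x,z,z)=u(x+z)+u(x-z)-2u(x)$:
\[
I(y)+I(-y) = \int_{T_\p^{\epsilon,\infty}(y)} \Lp(x,z,z)\dmN.
\]
This eliminates the gradient-type contribution and unlocks the improved second-order bound $|\Lp(x,z,z)|\le 2C_x|z|^2$ for $|z|<r_x$ provided by (\ref{C2}) with $\tilde z=z$.

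With this in hand the remaining work is routine bookkeeping: split the cone at radius $r_x$ (with the inner piece empty when $\epsilon\ge r_x$), apply $|\Lp(x,z,z)|\le 2C_x|z|^2$ on $T_\p^{\epsilon,r_x}(y)$ and the trivial $|\Lp(x,z,z)|\le 4\|u\|_{L^\infty}$ on $T_\p^{r_x,\infty}(y)$, and evaluate the two polar integrals $\int_\epsilon^{r_x}r^{1-2s}\,\mathrm{d}r\le\frac{r_x^{2-2s}}{2(1-s)}$ (finite thanks to $s<1$) and $\int_{r_x}^\infty r^{-2s-1}\,\mathrm{d}r=\frac{1}{2sr_x^{2s}}$; these produce exactly the two terms in the claimed estimate. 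For the statement about $\Ls(x)$ no symmetrisation is needed at all, since by Definition \ref{def_operators} the integrand is already the symmetric quantity $\Lp(x,z,z)$; the identical split at $r=r_x$ on the cone $T_\p^{0,\infty}(\tfrac{p_x}{|p_x|})$ yields the same bound, the only novelty being that the inner integral now runs from $0$, which is still convergent under the standing assumption $s<1$.
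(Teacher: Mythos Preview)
Your proof is correct and reaches the same bound by a slightly different, more direct route. The paper first rewrites $\Lse(x)$ via a change of variables as
\[
\Lse(x)=\sup_{|y|=1}\inf_{|\tilde y|=1}\int_{T_\p^{\epsilon,\infty}(y)}\Lp(x,z,R_{\tilde y,y}z)\dmN,
\]
then applies the two-point estimate (\ref{C2}) with $\tilde z=R_{\tilde y,y}z$ to control each such integral up to a residual linear term $\int\langle p_x,\,z-R_{\tilde y,y}z\rangle\dmN$, and finally shows separately that the sup--inf over $y,\tilde y$ of this linear piece vanishes. Your antipodal sandwich $\inf_y\bigl(I(y)+I(-y)\bigr)\le\Lse(x)\le\sup_y\bigl(I(y)+I(-y)\bigr)$ shortcuts this: it produces the symmetric second difference $\Lp(x,z,z)$ directly, so the linear term never appears and (\ref{C2}) is invoked only in the diagonal case $\tilde z=z$. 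For this proposition your argument is cleaner; the paper's more general rotation identity is not wasted, however, since it is precisely the device used later to track non-antipodal near-extremisers in Lemma~\ref{prop3} and in the viscosity-solution argument of Theorem~\ref{th_viscosity}.
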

\begin{proof}
The first claim is self-evident, because: $\mu_s^N(T_\p^{\epsilon,
  \infty})=  C(N,s) \int_\epsilon^\infty \frac{t^{N-1}
  |A_\p|}{t^{N+2s}}\;\mbox{d}t =  C(N,s)\frac{|A_\p|}{2s\epsilon^{2s}}$.
For the second claim, by changing variables we deduce that:
$$\Lse (x) = \sup_{|y|=1} \inf_{|\tilde y|=1} \int_{T_\p^{\epsilon,
    \infty}(y)} u(x+z) + u(x-R_{\tilde y, y}z) - 2u(x) \dmN.$$
Then, by (\ref{C2}) we get, for any $|y|=|\tilde y|=1$:
\begin{equation*}
\begin{split}
\Big|\int_{T_\p^{\epsilon, \infty}(y)}  u(x+z) + & \;u(x-R_{\tilde y, y}z) - 2u(x)  \dmN -
\int_{T_\p^{\epsilon, r_x}(y)} \langle p_x, z-R_{\tilde y, y}z\rangle
\dmN\Big| \\ & \leq \int_{T_\p^{\epsilon, r_x}(y)}
2C_x |z|^2\dmN + \int_{T_\p^{r_x, \infty}(y)}  4\|u\|_{L^\infty}\dmN
\\ & = C(N,s) |A_\p|\cdot \Big(\frac{C_x r_x^{2-2s}}{1-s}  + \frac{2\|u\|_{L^\infty}}{s r_x^{2s}}\Big).
\end{split}
\end{equation*}
On the other hand:
\begin{equation*}
\begin{split}
\sup_{|y|=1}\inf_{|\tilde y| =1} & \int_{T_\p^{\epsilon, r_x}(y)} \langle p_x, z-R_{\tilde y,
  y}z\rangle \dmN \\ & = \sup_{|y|=1}\inf_{|\tilde y| =1} \Big\langle p_x, 
\int_{T_\p^{\epsilon, r_x}(y)}z \dmN - \int_{T_\p^{\epsilon, r_x}(\tilde y)}z \dmN\Big\rangle = 0.
\end{split}
\end{equation*}
This results in:
\begin{equation*}
\begin{split}
& |\Lse (x) |  = \Big|\Lse (x) - \sup_{|y|=1}\inf_{|\tilde y| =1} \int_{T_\p^{\epsilon, r_x}(y)} \langle p_x, z-R_{\tilde y,
  y}z\rangle \dmN \Big | \\ & \leq
\sup_{|y|=|\tilde y|=1} \Big|\int_{T_\p^{\epsilon, \infty}(y)} u(x+z) + u(x-R_{\tilde y, y}z) - 2u(x) \dmN -
\int_{T_\p^{\epsilon, r_x}(y)} \langle p_x, z-R_{\tilde y, y}z\rangle \dmN\Big|,
\end{split}
\end{equation*}
ending the proof of the bound for $|\Lse (x) |$. The statement for $|\Ls (x)|$ follows similarly.
\end{proof}

We close this section by noting some useful identities:

\begin{lemma}\label{lem_Tp}
For every $\p\in [2, \infty)$ there exists $\alpha_\p\in (0, \frac{\pi}{2}]$ such that (\ref{Ap}) holds. 
Moreover:
\begin{enumerate}[leftmargin=7mm]
\item[(i)] $\displaystyle \int_{A_\p}\langle z,
  e_2\rangle^2\;\mathrm{d}\sigma(z)=\frac{|A_\p|}{N+\p-2}, ~$ 
and $~\displaystyle \int_{A_\p}\langle z,
  e_1\rangle^2\;\mathrm{d}\sigma(z)=\frac{\p-1}{N+\p-2} |A_\p|.$ 
We also have:  $~\displaystyle \int_{T_\p^{0,\epsilon}}\langle z,
  e_2\rangle^2\dmN = \frac{C(N,s) |A_\p|}{(N+\p-2) (2-2s)}\epsilon^{2-2s}$.

\item[(ii)] When $\nabla^2u(x)$ and $p_x\doteq \nabla u(x)\neq 0$ are well defined, then:
$$\int_{T_\p^{0,\epsilon}(\frac{p_x}{|p_x|})} \big\langle \nabla^2u(x) :
z\otimes z\big\rangle\dmN = \frac{C(N,s) |A_\p|}{(N+\p-2)(2-2s)}
\epsilon^{2-2s}\cdot |p_x|^{2-\p} \Delta_\p u(x).$$
\end{enumerate}
\end{lemma}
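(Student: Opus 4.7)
The plan is to handle the three assertions separately. For the existence of $\alpha_\p$, I would parametrise by the half-aperture $\alpha\in(0,\pi/2]$ and consider the ratio
$$f(\alpha) = \frac{\int_{A(\alpha)}\langle z,e_1\rangle^2\,\mathrm{d}\sigma(z)}{\int_{A(\alpha)}\langle z,e_2\rangle^2\,\mathrm{d}\sigma(z)}, \qquad A(\alpha)=\{z\in S^{N-1};~\angle(z,e_1)<\alpha\}.$$
In spherical coordinates one checks that $f$ is continuous and strictly decreasing, with limiting values $\lim_{\alpha\to 0^+} f(\alpha)=\infty$ (the cap concentrates on the $e_1$ axis) and $f(\pi/2)=1$ (by the symmetry of the half-sphere, where both numerator and denominator equal $|A_{\pi/2}|/N$). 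Since $\p-1\in[1,\infty)$ for $\p\in[2,\infty)$, the intermediate value theorem produces a unique $\alpha_\p\in(0,\pi/2]$ satisfying (\ref{Ap}). The main obstacle here is making the monotonicity of $f$ rigorous; while geometrically intuitive, it requires a careful bookkeeping of how the latitude-weighted integrands behave as $\alpha$ varies.

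For (i), the key identity is $\sum_{i=1}^N \langle z,e_i\rangle^2 \equiv 1$ on $S^{N-1}$, together with the rotational symmetry of $A_\p$ about the $e_1$ axis, which yields $\int_{A_\p}\langle z,e_i\rangle^2\,\mathrm{d}\sigma(z) = \int_{A_\p}\langle z,e_2\rangle^2\,\mathrm{d}\sigma(z)$ for every $i\geq 2$. Combining this with (\ref{Ap}) produces the linear relation
$$(\p-1+N-1)\int_{A_\p}\langle z,e_2\rangle^2\,\mathrm{d}\sigma(z) = |A_\p|,$$
from which both spherical identities follow immediately. The integral over $T_\p^{0,\epsilon}$ reduces via polar coordinates $z=r\omega$, $r\in(0,\epsilon)$, $\omega\in A_\p$, to
$$C(N,s)\Big(\int_0^\epsilon r^{1-2s}\,\mathrm{d}r\Big) \int_{A_\p}\langle \omega,e_2\rangle^2\,\mathrm{d}\sigma(\omega) = \frac{C(N,s)|A_\p|}{(N+\p-2)(2-2s)}\epsilon^{2-2s}.$$

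For (ii), I would rotate to align $p_x$ with $e_1$ by the change of variable $z=R_{e,e_1}\tilde z$, where $e=p_x/|p_x|$. Setting $\tilde H=R_{e,e_1}^\top \nabla^2u(x)R_{e,e_1}$, one rewrites the integrand as $\langle \tilde H\tilde z,\tilde z\rangle$. The reflection symmetries $\tilde z_i\mapsto -\tilde z_i$ for $i\geq 2$ annihilate all off-diagonal contributions, while rotational symmetry about $e_1$ forces the moments $\int_{T_\p^{0,\epsilon}}\tilde z_i^2\,\mathrm{d}\mu_s^N$ to coincide for $i\geq 2$. Hence the integral reduces to
$$\tilde H_{11}\int_{T_\p^{0,\epsilon}}\langle z,e_1\rangle^2\,\mathrm{d}\mu_s^N(z) + \big(\mathrm{tr}(\tilde H)-\tilde H_{11}\big)\int_{T_\p^{0,\epsilon}}\langle z,e_2\rangle^2\,\mathrm{d}\mu_s^N(z),$$
where $\tilde H_{11}=\langle \nabla^2 u(x)e,e\rangle$ and $\mathrm{tr}(\tilde H)=\Delta u(x)$. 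Using (i) to replace $\int\langle z,e_1\rangle^2\,\mathrm{d}\mu_s^N$ by $(\p-1)\int\langle z,e_2\rangle^2\,\mathrm{d}\mu_s^N$, the expression collapses to
$$\frac{C(N,s)|A_\p|\,\epsilon^{2-2s}}{(N+\p-2)(2-2s)}\Big((\p-2)\langle\nabla^2u(x)e,e\rangle+\Delta u(x)\Big),$$
and the bracket is precisely $|p_x|^{2-\p}\Delta_\p u(x)$ by the classical identity $\Delta_\p u = |\nabla u|^{\p-2}\Delta u+(\p-2)|\nabla u|^{\p-4}\langle \nabla^2 u\,\nabla u,\nabla u\rangle$.
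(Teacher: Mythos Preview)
Your argument is correct and follows essentially the same route as the paper: the same ratio function $Q=f$ with the same limiting values, the same use of $\sum_i\langle z,e_i\rangle^2=1$ plus rotational symmetry about $e_1$ for (i), and for (ii) the same reduction to the diagonal moments $\int\langle z,e_1\rangle^2$ and $\int\langle z,e_2\rangle^2$ (the paper phrases this via the matrix identity $\int z\otimes z\,\mathrm{d}\mu_s^N = \int\langle z,e_2\rangle^2\,\mathrm{d}\mu_s^N\cdot(Id_N+(\p-2)e_1\otimes e_1)$, which is exactly your coordinate computation).

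One remark: you flag the strict monotonicity of $f$ as the ``main obstacle'' in order to obtain \emph{uniqueness} of $\alpha_\p$, but the lemma only asserts existence, and the paper accordingly invokes nothing beyond continuity and the intermediate value theorem. You can drop the monotonicity claim entirely; the obstacle you identify is not present for the statement as written.
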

\begin{proof}
We consider the following function, which is continuous on $(0,\pi)$:
$$\alpha\mapsto Q(\alpha) \doteq \frac{\int_{A(\alpha)} \langle z,e_1\rangle^2
  \;\mbox{d}\sigma(z)}{\int_{A(\alpha)}\langle z,e_2\rangle^2
  \;\mbox{d}\sigma(z)}, \quad \mbox{ where }\; A(\alpha) = \{|z|=1;~\angle (e_1,z)<\alpha\}.$$ 
Since $Q(\frac{\pi}{2}) = {\frac{1}{2}\int_{\{|z|=1\} } \langle z,e_1\rangle^2
  \;\mbox{d}\sigma(z)}/\big({\frac{1}{2}\int_{\{|z|=1\} }\langle z,e_2\rangle^2 \;\mbox{d}\sigma(z)}\big)=1$,
while $\lim_{\alpha\to 0} Q(\alpha) = \infty$, it follows that for each $\p-1
\in [1,\infty)$ there indeed exists $\alpha_\p\in (0,\frac{\pi}{2}]$ satisfying $Q(\alpha_\p) = \p-1$.

To prove (i), we compute, putting $A_\p = A({\alpha_\p})$:
$$Q(\alpha_\p) = \frac{\int_{A_\p} 1\;\mbox{d}\sigma(z) - (N-1)\int_{A_\p}\langle z,
  e_2\rangle^2\;\mbox{d}\sigma(z)}{\int_{A_\p}\langle z,
  e_2\rangle^2\;\mbox{d}\sigma(z)} = \frac{1}{\fint_{A_\p}\langle z,
  e_2\rangle^2\;\mbox{d}\sigma(z)} - (N-1),$$
which implies that: $\fint_{A_\p}\langle z,
e_2\rangle^2\;\mbox{d}\sigma(z)=\frac{1}{N+\p-2}$, and consequently:  $\fint_{A_\p}\langle z,
e_1\rangle^2\;\mbox{d}\sigma(z)= 1- (N-1) \fint_{A_\p}\langle z,
e_2\rangle^2\;\mbox{d}\sigma(z)=\frac{\p-1}{N+\p-2}$.
On the other hand:
\begin{equation*}
\begin{split}
\int_{T_\p^{0,\epsilon}} \langle z, e_2\rangle^2\dmN & =
C(N,s)\int_0^\epsilon \frac{t^2 t^{N-1}}{t^{N+2s}} \int_{A_\p}\langle z,
  e_2\rangle^2\;\mbox{d}\sigma(z)\; \mbox{d}t \\ & = C(N,s) \int_{A_\p}\langle z,
  e_2\rangle^2\;\mbox{d}\sigma(z)\cdot \frac{\epsilon^{2-2s}}{2-2s}.
\end{split}
\end{equation*}

To prove (ii), observe that:
\begin{equation*}
\begin{split}
& \int_{T_\p^{0,\epsilon}} z\otimes z\dmN  = diag
\Big(\int_{T_\p^{0,\epsilon}} \langle z, e_1\rangle^2\dmN,
\int_{T_\p^{0,\epsilon}} \langle z, e_2\rangle^2\dmN, \ldots\Big) \\ & =
\int_{T_\p^{0,\epsilon}} \langle z, e_2\rangle^2\dmN\cdot Id_N +
\Big( \int_{T_\p^{0,\epsilon}} \langle z, e_1\rangle^2\dmN -
\int_{T_\p^{0,\epsilon}} \langle z, e_2\rangle^2\dmN\Big)
e_1\otimes e_1 \\ & = \int_{T_\p^{0,\epsilon}} \langle z,
e_2\rangle^2\dmN\cdot \Big( Id_N +(\p-2) \,e_1\otimes e_1\Big)
\\ & = \frac{C(N,s) |A_\p|}{(N+\p-2) (2-2s)}\epsilon^{2-2s}\cdot \Big( Id_N +(\p-2) \,e_1\otimes e_1\Big),
\end{split}
\end{equation*}
where we used:
$$\frac{ \int_{T_\p^{0,\epsilon}} \langle z, e_1\rangle^2\dmN}{
\int_{T_\p^{0,\epsilon}} \langle z, e_2\rangle^2\dmN}= Q(\alpha_\p)=\p-1.$$
It thus follows that:
\begin{equation*}
\begin{split}
& \Big\langle \nabla^2u(x) : \int_{T_\p^{0,\epsilon}(\frac{p_x}{|p_x|})} 
z\otimes z \dmN \Big\rangle \\ & = \frac{C(N,s) |A_\p|}{(N+\p-2) (2-2s)}\epsilon^{2-2s}
\Big\langle \nabla^2u(x) :  Id_N +(\p-2) \frac{p_x}{|p_x|}\otimes \frac{p_x}{|p_x|}\Big\rangle,
\end{split}
\end{equation*}
which completes the argument, upon recalling the formula: $\Delta_\p u =
|\nabla u|^{\p-2}\big(\Delta u + (\p-2)\Delta_\infty u)$ and the
definition of the $\infty$-Laplacian:
$\Delta_\infty u = \langle \nabla^2 u : \frac{\nabla u}{|\nabla u|}\otimes \frac{\nabla u}{|\nabla u|}\rangle$.
\end{proof}

\begin{remark}\label{rem2.5}
In \cite{BCF2}[Section 4.1.2],  the fractional $\p$-Laplacian
$\Delta_\p^su$ has been introduced by means of a scaled version 
of the operator $\Ls$. In particular, when $p_x\neq 0$ it follows that:
$$\Delta_\p^s u(x)= \frac{2-2s}{C(N,s)} \cdot\frac{N+\p-2}{|A_\p|}\Ls (x).$$
\end{remark}

\section{A non-local asymptotic expansion}\label{sec3}

In this section we prove the formula (\ref{raz}) with a precise form
of the error term. In what follows, we denote $B_r=B_r(x)$
for a fixed referential point $x\in\R^N$.
Given a function $u:\R^N\to\R$ that is uniformly continuous on 
$\R^N\setminus \bar B_r$, we denote its modulus of continuity by:
$$\omega_u (a)=\sup\big\{|u(z)-u(\bar z)|;~ z, \bar z \in \R^N\setminus
\bar B_{r_x},~ |z-\bar z|\leq a\big\}.$$

\begin{theorem}\label{thm4.7}
Let $u\in C^2(\bar B_{r_x})$ satisfy $p_x\doteq \nabla
u(x)\neq 0$, and denote $C_x\doteq \frac{1}{2}\|\nabla^2u\|_{L^\infty(B_{r_x})}$.
Assume that $u$ is uniformly continuous on
$\R^N\setminus \bar B_{r_x}$ with modulus of continuity $\omega_u$. Recall that:
$$\A_\epsilon u(x) \doteq
\frac{1}{2}\Big(\sup_{|y|=1}\fint_{T_\p^{\epsilon, \infty}(y)} u(x+z)
\dmN + \inf_{|y|=1}\fint_{T_\p^{\epsilon, \infty}(y)} u(x+z)\dmN \Big).$$
Then there holds:
\begin{equation}\label{sec3_m}
\begin{split}
\Big|\A_\epsilon&u(x) - u(x) -
\frac{s}{C(N,s) |A_\p|}\epsilon^{2s}\Ls (x) \Big| \\ & \leq  \frac{s}{1-s}\cdot
C_x \epsilon^2 + \epsilon^{2s}\Big(4
sC_x\frac{r_x^{2-2s}-\epsilon^{2-2s}}{1-s}\cdot m_\epsilon
+ \big(r_x^{-2s} + \frac{2s}{2s-1}r_x^{1-2s}\big)\cdot \omega_u(m_\epsilon)\Big),
\end{split}
\end{equation}
where we define:
\begin{equation}\label{meke}
\begin{split}
&  m_\epsilon = \max\Big\{
\frac{16 (N+\p-2)}{\p-1} \cdot \frac{C_x}{|p_x|} \cdot\frac{2s-1}{1-s}\cdot 
\frac{r_x^{2-2s}-\epsilon^{2-2s}}{\epsilon^{1-2s} -{r_x}^{1-2s}}, ~  ~\kappa_\epsilon\Big\}, \\ &
\kappa_\epsilon = \sup\Big\{ m;~ m\in [0,2] \; \mbox{ and } \; m^2
\leq  \frac{N+\p-2}{\p-1}\cdot\frac{8\omega_u(m)}{|p_x|} \cdot \frac{\frac{2s-1}{2s} 
  r_x^{-2s} +r_x^{1-2s}}{\epsilon^{1-2s} - r_x^{1-2s}}\Big\}.
\end{split}
\end{equation}
\end{theorem}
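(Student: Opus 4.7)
The plan is to reduce (\ref{sec3_m}) to an estimate on $|\Lse(x) - \Ls(x)|$. Since $\mu_s^N(T_\p^{\epsilon,\infty}) = \frac{C(N,s)|A_\p|}{2s\epsilon^{2s}}$ by Lemma \ref{lem_Tp}, one has $\A_\epsilon u(x) - u(x) = \frac{s\epsilon^{2s}}{C(N,s)|A_\p|}\Lse(x)$, so (\ref{sec3_m}) is equivalent to a corresponding inequality on $|\Lse(x) - \Ls(x)|$ inflated by the factor $\frac{C(N,s)|A_\p|}{s\epsilon^{2s}}$.

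The first structural step is the symmetry $z \mapsto -z$, which maps $T_\p^{\epsilon,\infty}(\frac{p_x}{|p_x|})$ onto $T_\p^{\epsilon,\infty}(-\frac{p_x}{|p_x|})$ isometrically for $\mu_s^N$. Consequently
\begin{equation*}
\int_{T_\p^{\epsilon,\infty}(\frac{p_x}{|p_x|})}(u(x+z)-u(x))\dmN + \int_{T_\p^{\epsilon,\infty}(-\frac{p_x}{|p_x|})}(u(x+z)-u(x))\dmN = \int_{T_\p^{\epsilon,\infty}(\frac{p_x}{|p_x|})}\Lp(x,z,z)\dmN,
\end{equation*}
which differs from $\Ls(x)$ only by $\int_{T_\p^{0,\epsilon}(\frac{p_x}{|p_x|})}\Lp(x,z,z)\dmN$. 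Using $|\Lp(x,z,z)|\leq 2C_x|z|^2$ from (\ref{C2}) (with $\tilde z = z$) and Lemma \ref{lem_Tp}(i), this near-origin piece is at most $\frac{C(N,s)|A_\p|C_x\epsilon^{2-2s}}{1-s}$; after the rescaling it yields exactly the first term $\frac{s}{1-s}C_x\epsilon^2$ in (\ref{sec3_m}).

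The main step is to show that the maximizer $y^+$ of $g_\epsilon(y) \doteq \int_{T_\p^{\epsilon,\infty}(y)}(u(x+z)-u(x))\dmN$ lies within distance $m_\epsilon$ of $\frac{p_x}{|p_x|}$ (and similarly for the minimizer $y^-$ near $-\frac{p_x}{|p_x|}$). Splitting along $|z|=r_x$, applying the rotation $z=R_{y,e_1}z'$ (an isometry of $\mu_s^N$), and using the Taylor expansion (\ref{C11}) yields
\begin{equation*}
g_\epsilon(y) = M(\epsilon,r_x)\langle p_x, y\rangle + E_1(y) + F(y),
\end{equation*}
with $M(\epsilon,r_x) = \int_{T_\p^{\epsilon,r_x}}\langle z,e_1\rangle\dmN$ (of order $\epsilon^{1-2s}$), $|E_1(y)|\leq C_x\int_{T_\p^{\epsilon,r_x}}|z|^2\dmN$, with $E_1$ also Lipschitz in $y$ with the same constant, and $F(y)=\int_{T_\p^{r_x,\infty}(y)}(u(x+z)-u(x))\dmN$. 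The identity $\langle\frac{p_x}{|p_x|},y\rangle=1-\frac{1}{2}|y-\frac{p_x}{|p_x|}|^2$ and the optimality $g_\epsilon(y^+)\geq g_\epsilon(\frac{p_x}{|p_x|})$ collapse to the quadratic inequality
\begin{equation*}
\tfrac{1}{2}M|p_x|\,\delta^2 \leq \big|E_1(y^+)-E_1(\tfrac{p_x}{|p_x|})\big| + \big|F(y^+)-F(\tfrac{p_x}{|p_x|})\big|, \qquad \delta = |y^+-\tfrac{p_x}{|p_x|}|,
\end{equation*}
evaluated via Lemma \ref{lem_Tp}(i). For the $F$-difference, the rotation $z=R_{y^+,p_x/|p_x|}\tilde z$ combined with $|u(x+R\tilde z)-u(x+\tilde z)|\leq \omega_u(\delta|\tilde z|)$ (both points lying outside $\bar B_{r_x}$) reduces the estimate to an integral of $\omega_u(\delta|\tilde z|)$ over $T_\p^{r_x,\infty}$; passing to a subadditive envelope so that $\omega_u(\delta|\tilde z|)\leq (1+|\tilde z|)\omega_u(\delta)$ and integrating in polar coordinates produces coefficients proportional to $r_x^{-2s}$ and $r_x^{1-2s}$. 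Substituting back gives a mixed bound $\delta^2 \leq A\delta + B\omega_u(\delta)$, whose largest root is exactly $m_\epsilon = \max\{\text{pure-}C_x\text{ piece},\kappa_\epsilon\}$ from (\ref{meke}): the $E_1$-Lipschitz part produces the first entry of $m_\epsilon$, while the self-referential inequality $\kappa^2 \leq \mathrm{const}\cdot\omega_u(\kappa)/|p_x|\cdot(\ldots)$ defines $\kappa_\epsilon$. An analogous argument governs $y^-$.

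Finally, feeding $\delta\leq m_\epsilon$ back into the decomposition $g_\epsilon(y^+)-g_\epsilon(\frac{p_x}{|p_x|})\leq 2C_x V\delta + |F(y^+)-F(\frac{p_x}{|p_x|})|$ (with $V=\int_{T_\p^{\epsilon,r_x}}|z|^2\dmN$ and $|F(y^+)-F(\frac{p_x}{|p_x|})|\leq \omega_u(m_\epsilon)\cdot\text{far-field factor}$), together with the symmetric estimate for $y^-$, bounds the two supremum/infimum errors and produces, after the overall rescaling and Lemma \ref{lem_Tp}(i), precisely the second term $4sC_x\frac{r_x^{2-2s}-\epsilon^{2-2s}}{1-s}\epsilon^{2s} m_\epsilon$ and the third term $\epsilon^{2s}(r_x^{-2s}+\frac{2s}{2s-1}r_x^{1-2s})\omega_u(m_\epsilon)$ in (\ref{sec3_m}). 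The main obstacle will be this self-consistent bookkeeping on the far field: since only a modulus of continuity (rather than pointwise smoothness) is available on $\R^N\setminus\bar B_{r_x}$, the subadditive-envelope trick and its precise balance against the polynomial growth of $\mu_s^N$ on $T_\p^{r_x,\infty}$ are what dictate the specific weights $r_x^{-2s}$ and $\frac{2s}{2s-1}r_x^{1-2s}$, and what forces $\kappa_\epsilon$ to be defined implicitly rather than explicitly.
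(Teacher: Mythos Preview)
Your proposal is correct and follows essentially the same route as the paper. The paper organizes the argument as a separate Lemma (estimating $\big|\Lse(x)-\int_{T_\p^{\epsilon,\infty}(p_x/|p_x|)}L_u(x,z,z)\dmN\big|$), then adds the near-origin piece $\int_{T_\p^{0,\epsilon}}L_u$ and rescales, exactly as in your first paragraph. Two minor organizational differences are worth flagging. First, the paper works with $\delta$-almost-maximizers $y_\delta^\epsilon$ (and passes $\delta\to 0$ at the end) rather than assuming an exact maximizer $y^+$; under the stated hypotheses $y\mapsto g_\epsilon(y)$ is indeed continuous on the sphere, so your shortcut is legitimate, but you should say one word about this. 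Second, rather than pre-decomposing $g_\epsilon(y)=M\langle p_x,y\rangle+E_1(y)+F(y)$ and invoking a Lipschitz bound on $E_1$, the paper compares $g_\epsilon(p_x/|p_x|)$ and $g_\epsilon(y_\delta^\epsilon)$ directly via the single rotation $R_{y_\delta^\epsilon,\,p_x/|p_x|}$ and Taylor-expands $u(x+z)-u(x+Rz)$ about $x+z$; this sidesteps any question about the size of $|R_{y,e_1}-R_{\tilde y,e_1}|$ and makes the near-field constant transparent. For the final error, the paper sandwiches $\Lse(x)$ between the two symmetric integrals $\int_{T_\p^{\epsilon,\infty}(y_\delta^\epsilon)}L_u$ and $\int_{T_\p^{\epsilon,\infty}(\tilde y_\delta^\epsilon)}L_u$ and bounds each $J(y)$; your direct sum $[g_\epsilon(y^+)-g_\epsilon(p_x/|p_x|)]+[g_\epsilon(y^-)-g_\epsilon(-p_x/|p_x|)]$ works as well and in fact yields a slightly smaller near-field constant than the stated $4s$, so your claim to hit the coefficients ``precisely'' is a harmless overstatement.
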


\medskip

Before giving the proof, a few observations are in order:

\begin{remark}\label{rem4}
\begin{enumerate}[leftmargin=7mm]
\item [(i)] Using the crude bound $\omega_u\leq 2\|u\|_{L^\infty}$, it
  follows that for all $\epsilon<\frac{r_x}{2}$ we have:
\begin{equation*}
\begin{split}
\kappa_\epsilon & \leq
\Big(\frac{16\|u\|_{L^\infty}}{|p_x|}\cdot \frac{N+\p-2}{\p-1} \cdot
\frac{\frac{2s-1}{2s} r_x^{-2s} + r_x^{1-2s}}{\epsilon^{1-2s} - r_x^{1-2s}}\Big)^{1/2}
\\ & \leq 8 \Big(\frac{\|u\|_{L^\infty}}{|p_x|}\cdot \frac{N+\p-2}{\p-1}\cdot
\frac{r_x^{-2s} + r_x^{1-2s}}{2s-1}\Big)^{1/2}\epsilon^{s-1/2}.
\end{split}
\end{equation*}
In the second inequality above
we used that $\epsilon<\frac{r_x}{2}$ implies: $\epsilon^{1-2s} -
r_x^{1-2s}> \epsilon^{1-2s} (1-2^{1-2s})$, and that $1-2^{1-2s}\geq (2s-1)\ln\sqrt{2}
>\frac{2s-1}{4}$ in the range $s\in (\frac{1}{2},1)$.
Since the first quantity in $m_\epsilon$ is of order $\epsilon^{2s-1}$, the
bounding coefficient of order $\epsilon^{2s}$ in (\ref{sec3_m}) becomes: 
$$ C_{N,\p, s} C(r_x)\cdot C\big(\frac{\|u\|_{L^\infty}}{|p_x|}\big)\cdot 
\big( C_x \epsilon^{s-1/2} + \omega_u(\epsilon^{s-1/2})\big), $$
where $C(\alpha)$ depends only on the indicated quantity $\alpha$.
\item[(ii)] When $u\in C^{0,\alpha}(\R^N\setminus \bar B_{r_x})$ with
$\alpha\in (0,1)$, then $\omega_u(m)=[u]_\alpha m^{\alpha}$ and
we similarly obtain:
$$\kappa_\epsilon \leq
\Big(\frac{32\;[u]_{\alpha}}{|p_x|}\cdot \frac{N+\p-2}{\p-1} \cdot
\frac{r_x^{-2s} +r_x^{1-2s}}{2s-1}\Big)^{\frac{1}{2-\alpha}}\epsilon^{\frac{2s-1}{2-\alpha}},$$
resulting in the following bounding constant:
$$ C_{N,\p, s} C(r_x)\cdot C\big(\frac{[u]_{\alpha}}{|p_x|}\big)\epsilon^{\alpha\cdot\frac{2s-1}{2-\alpha}}.$$
\item[(iii)] When $u$ is  Lipschitz on $\R^N\setminus \bar
  B_{r_x}$ with the Lipschitz constant $\mbox{Lip}_u$, we get: 
$$\kappa_\epsilon \leq \frac{32 \; \mbox{Lip}_u}{|p_x|}\cdot \frac{N+\p-2}{\p-1} \cdot
\frac{r_x^{-2s} + r_x^{1-2s}}{2s-1}\epsilon^{2s-1},$$
so both quantities in $m_\epsilon$ are of the same order $\epsilon^{2s-1}$ and:
$$ m_\epsilon\leq \frac{32}{|p_x|}\cdot \frac{N+\p-2}{\p-1} \cdot\max\Big\{\frac{2C_x r_x^{2-2s}}{1-s},
\frac{\mbox{Lip}_u\cdot (r_x^{-2s}+r_x^{1-2s})}{2s-1}\Big\}\epsilon^{2s-1}.$$
Consequently, the discussed bounding expression becomes:
$$ C_{N,\p,s} C(r_x)\cdot C\big(C_x,\frac{1}{|p_x|}, \mbox{Lip}_u\big)\epsilon^{2s-1}. $$
\end{enumerate}
\end{remark}

\medskip

In order to estimate the difference of $\Lse$ and $\Ls$ when $p_x\neq 0$, we will
analyze the behaviour of approximations to the extremizers $y, \tilde y$ in Definition \ref{def_operators}.
The proof below follows the outline of the proof of \cite[Theorem
1]{EDL} in the fractional $\infty$-Laplacian setting.

\begin{lemma}\label{prop3}
Under the same assumptions and notation as in Theorem \ref{thm4.7}, 
for every $\epsilon< r_x$ there holds, with  the quantity $m_\epsilon$ is as (\ref{meke}).
\begin{equation}\label{tre}
\begin{split}
\Big|\Lse (x)- & \int_{T_\p^{\epsilon,
    \infty}(\frac{p_x}{|p_x|})}\Lp(x, z, z)\dmN\Big|\\ & \leq C(N,s)|A_\p|\cdot\Big(
4m_\epsilon C_x\frac{r_x^{2-2s}-\epsilon^{2-2s}}{1-s} + 
2\omega_u(m_\epsilon)\Big( \frac{r_x^{-2s}}{2s} + \frac{r_x^{1-2s}}{2s-1}\Big)\Big),
\end{split}
\end{equation}
\end{lemma}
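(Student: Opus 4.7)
I would compare the sup-inf functional defining $\Lse(x)$ with its evaluation at the distinguished direction $y_0 := p_x/|p_x|$. Using the reformulation established in the proof of Proposition \ref{lem1}, together with the substitution $z\mapsto -z$ exploiting the symmetry of $\mathrm{d}\mu_s^N$, both sides rewrite as
\begin{equation*}
\Lse(x) = \sup_{|y|=1} G(y) + \inf_{|\tilde y|=1} G(\tilde y) - 2u(x)\mu_s^N(T_\p^{\epsilon,\infty}),
\end{equation*}
\begin{equation*}
\int_{T_\p^{\epsilon,\infty}(y_0)} \Lp(x,z,z)\dmN = G(y_0) + G(-y_0) - 2u(x)\mu_s^N(T_\p^{\epsilon,\infty}),
\end{equation*}
where $G(y) := \int_{T_\p^{\epsilon,\infty}(y)} u(x+z)\dmN$. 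The error in (\ref{tre}) thus reduces to the two nonnegative defects $\mathcal{D}_+ := \sup_y G(y) - G(y_0)$ and $\mathcal{D}_- := G(-y_0) - \inf_{\tilde y} G(\tilde y)$, each of which must be controlled.

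The central decomposition is $G(y) = (\text{const}) + c_{\epsilon,r_x}\langle p_x, y\rangle + E_1(y) + G_{far}(y)$, obtained by splitting the integration at $|z| = r_x$ and Taylor-expanding $u$ on the near cone. Here $c_{\epsilon,r_x} := \int_{T_\p^{\epsilon,r_x}}\langle w, e_1\rangle\dmN$ scales as $(\epsilon^{1-2s} - r_x^{1-2s})/(2s-1)$ (finite thanks to $s > 1/2$), $E_1(y) := \int_{T_\p^{\epsilon,r_x}(y)}(u(x+z) - u(x) - \langle p_x, z\rangle)\dmN$, and $G_{far}(y) := \int_{T_\p^{r_x,\infty}(y)} u(x+z)\dmN$. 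Since $y, y_0$ are unit vectors, $\langle p_x, y-y_0\rangle = -\tfrac12|p_x||y - y_0|^2$, and sup-optimality of $y^*$ yields the quadratic-drop inequality
\begin{equation*}
\tfrac12 c_{\epsilon, r_x}|p_x||y^* - y_0|^2 \leq |E_1(y^*) - E_1(y_0)| + |G_{far}(y^*) - G_{far}(y_0)|,
\end{equation*}
with the analogous statement at $-y_0$ for the infimizer $\tilde y^*$.

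To estimate the two variation terms, change variables $z = R_{y,e_1}w$. For $E_1$: since $\tilde u(z) := u(x+z) - u(x) - \langle p_x, z\rangle$ satisfies $|\nabla\tilde u(z)| \leq 2C_x|z|$ on $B_{r_x}$, and $\|R_{y,e_1} - R_{y_0,e_1}\|_{op} \leq |y - y_0|$, the mean value theorem along the connecting segment (which remains in $B_{r_x}$) gives the pointwise bound $2C_x|y-y_0||w|^2$; integration via Lemma \ref{lem_Tp}(i) contributes a term of order $C_x|y-y_0|(r_x^{2-2s}-\epsilon^{2-2s})/(1-s)$. For $G_{far}$: both $x + R_{y,e_1}w$ and $x + R_{y_0,e_1}w$ lie in $\R^N\setminus\bar B_{r_x}$ when $|w| > r_x$, so the pointwise bound $|u(x+R_{y,e_1}w) - u(x+R_{y_0,e_1}w)| \leq \omega_u(|y-y_0|\cdot|w|)$ applies; combining subadditivity of $\omega_u$ (giving $\omega_u(\lambda t) \leq (1+\lambda)\omega_u(t)$) with the crude bound $\omega_u \leq 2\|u\|_\infty$ and the integrability of $|w|^{-1-2s}$ on $[r_x,\infty)$ produces an estimate of the form $\omega_u(|y-y_0|)\cdot(r_x^{-2s}/(2s) + r_x^{1-2s}/(2s-1))$ times $C(N,s)|A_\p|$. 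Using the sharp lower bound $\fint_{A_\p}\langle z, e_1\rangle\;\mathrm{d}\sigma(z) \geq \fint_{A_\p}\langle z, e_1\rangle^2\;\mathrm{d}\sigma(z) = (\p-1)/(N+\p-2)$ from Lemma \ref{lem_Tp}(i) (valid since $\langle z, e_1\rangle \in [0,1]$ on $A_\p$), the quadratic-drop inequality becomes $\tfrac12 c_{\epsilon, r_x}|p_x| m^2 \leq A m + B\omega_u(m)$, whose maximal solution $m_\epsilon = \max\{m_\epsilon^{(1)}, \kappa_\epsilon\}$ matches (\ref{meke}): the first component $m_\epsilon^{(1)}$ balances the linear term $Am$, and $\kappa_\epsilon$ balances the modulus term $B\omega_u(m)$.

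Substituting $|y^* - y_0|, |\tilde y^* + y_0| \leq m_\epsilon$ back into the variation estimates and summing $\mathcal{D}_+ + \mathcal{D}_-$ produces (\ref{tre}). \emph{Main obstacle:} the subtlest step is the far-field variation estimate on $|w| > r_x$, where the naive bound $\omega_u(|y-y_0|\cdot|w|)$ grows with $|w|$; extracting a clean dependence on $\omega_u(|y-y_0|)$ requires exploiting subadditivity of $\omega_u$ together with the singular weight $|z|^{-N-2s}$ and the integrability gain from $s > 1/2$. The exact matching of prefactors in (\ref{meke}) through the cone-moment identities of Lemma \ref{lem_Tp}(i) is the main bookkeeping task.
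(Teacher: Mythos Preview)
Your approach is essentially the same as the paper's: both arguments show that any (almost-)maximizer $y^*$ of $y\mapsto G(y)$ must lie within $m_\epsilon$ of $y_0=p_x/|p_x|$ via the quadratic-drop identity $\langle p_x,y-y_0\rangle=-\tfrac12|p_x||y-y_0|^2$, then plug this into near-field and far-field variation bounds to control the defect. Your reformulation through $G$ and the decomposition $\mathcal D_+ +\mathcal D_-$ is a clean repackaging of the paper's estimate (\ref{sette}) with its $J_1,J_2$ split, and your invocation of subadditivity of $\omega_u$ to handle the growth $\omega_u(|y-y_0|\cdot|w|)$ on the far cone is exactly what the paper uses (implicitly) to produce the factor $(1+|z|)$ in its bound $\omega_u(m)\int_{T_\p^{r_x,\infty}}(1+|z|)\dmN$.

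One technical point deserves care: you assert $\|R_{y,e_1}-R_{y_0,e_1}\|_{op}\le |y-y_0|$, but when $y$ and $y_0$ span different planes with $e_1$ this is not obvious (the two rotations act in different $2$-planes). The paper sidesteps this by parametrizing over the \emph{fixed} cone $T_\p^{\epsilon,r_x}(y_0)$ and using the single rotation $R_{y^*,y_0}$, for which $|Id_N-R_{y^*,y_0}|=|y^*-y_0|$ holds by definition. You can make the same move: write $G(y^*)-G(y_0)=\int_{T_\p^{\epsilon,\infty}(y_0)}\big(u(x+R_{y^*,y_0}z)-u(x+z)\big)\dmN$ and bound the integrand via $|R_{y^*,y_0}z-z|\le |y^*-y_0|\,|z|$; this recovers your estimates without the extra operator-norm claim. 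With that adjustment your constants come out at least as good as the paper's (in fact slightly sharper, since you avoid the $\delta$-bookkeeping of (\ref{tre.5})), which is harmless because the statement only requires the bound with the specific $m_\epsilon$ in (\ref{meke}).
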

\begin{proof}
{\bf 1.} For every $\epsilon<\eta_x$ and every $\delta>0$ satisfying:
\begin{equation}\label{tre.5}
\delta\leq \frac{\big(16 C_x \int_{T_\p^{\epsilon,
      r_x}}|z|^2\dmN\big)^2}{|p_x|\int_{T_\p^{\epsilon, r_x}}\langle z,e_1\rangle\dmN},
\end{equation}
let $|y_\delta^\epsilon|=1$ be such that
$\displaystyle \sup_{|y|=1}\int_{T_\p^{\epsilon, \infty}(y)}u(x+z)\dmN
\leq \int_{T_\p^{\epsilon, \infty}(y_\delta^\epsilon)}u(x+z)\dmN
+\delta$. Then:
\begin{equation*}
\begin{split}
\delta &\geq \int_{T_\p^{\epsilon,
    \infty}(\frac{p_x}{|p_x|})}u(x+z)\dmN - \int_{T_\p^{\epsilon, \infty}(y_\delta^\epsilon)}u(x+z)\dmN 
\\ & \geq \int_{T_\p^{\epsilon, r_x}(\frac{p_x}{|p_x|})}u(x+z)-
u(x+R_{y_\delta^\epsilon, \frac{p_x}{|p_x|}}z)\dmN -
\omega_u \big (|Id_N - R_{y_\delta^\epsilon, \frac{p_x}{|p_x|}}|\big) \cdot \int_{T_\p^{r_x, \infty}}(1+|z|)\dmN, 
\end{split}
\end{equation*}
because:
\begin{equation*}
\begin{split}
&\int_{T_\p^{r_x, \infty}(\frac{p_x}{|p_x|})} \big|u(x+z) - u(x+
R_{y_\delta^\epsilon, \frac{p_x}{|p_x|}}z)\big| \dmN \\ & \quad \leq 
\int_{T_\p^{r_x, \infty}(\frac{p_x}{|p_x|})} (1+|z|) \cdot
\sup\Big\{|u(y_1) - u(y_2)|; ~ y_1, y_2\not\in\bar B_{r_x}, ~
|y_1-y_2|\leq |Id_N - R_{y_\delta^\epsilon, \frac{p_x}{|p_x|}}|\Big\} \dmN
\\ & \quad \leq \omega_u(|Id_N - R_{y_\delta^\epsilon, \frac{p_x}{|p_x|}}|)\cdot 
\int_{T_\p^{r_x, \infty}} (1+|z|)  \dmN.
\end{split}
\end{equation*}
Call $m\doteq \big|\frac{p_x}{|p_x|}-y_\delta^\epsilon\big|$ and note that
$|Id_N - R_{y_\delta^\epsilon, \frac{p_x}{|p_x|}}|=m$. Recalling (\ref{C11}) further leads to:
\begin{equation}\label{tre.6}
\begin{split}
\delta &\geq 
\int_{T_\p^{\epsilon, r_x}(\frac{p_x}{|p_x|})}\Big\langle\nabla u(x+z), 
(Id_N - R_{y_\delta^\epsilon, \frac{p_x}{|p_x|}})z\Big\rangle\dmN \\ &\quad
- C_x \int_{T_\p^{\epsilon, r_x}} m^2 |z|^2 \dmN - 
\omega_u(m) \int_{T_\p^{r_x, \infty}} (1+|z|)  \dmN 
\\ & \geq \Big\langle p_x, (Id_N - R_{y_\delta^\epsilon, \frac{p_x}{|p_x|}})
\int_{T_\p^{\epsilon, r_x}(\frac{p_x}{|p_x|})} z  \dmN\Big\rangle  \\ & \quad
- C_x m (2+m) \cdot \int_{T_\p^{\epsilon, r_x}} |z|^2  \dmN -
\omega_u(m)\cdot \int_{T_\p^{r_x, \infty}} (1+|z|)  \dmN,
\end{split}
\end{equation}
since $\big|\nabla u(x+z) - \nabla u(x)\big|\leq 2C_x|z|$ for all $z\in B_{r_x}$.

We now observe that:
$ \int_{T_\p^{\epsilon, r_x}(\frac{p_x}{|p_x|})} z  \dmN=
\int_{T_\p^{\epsilon, r_x}} \langle z, e_1\rangle  \dmN\cdot
\frac{p_x}{|p_x|}$, which yields that the first term in the right
hand side of (\ref{tre.6}) equals:
$$\int_{T_\p^{\epsilon, r_x}} \langle z, e_1\rangle \dmN\cdot
\Big\langle p_x, (Id_N - R_{y_\delta^\epsilon, \frac{p_x}{|p_x|}})
\frac{p_x}{|p_x|} \Big\rangle  = \int_{T_\p^{\epsilon, r_x}} \langle z, e_1\rangle  \dmN\cdot
\big\langle p_x, \frac{p_x}{|p_x|} - y_\delta^\epsilon\big\rangle.$$
Finally, noting the identity:
\begin{equation*}
\begin{split}
m^2=\Big|\frac{p_x}{|p_x|} - y_\delta^\epsilon\Big|^2 = 2 - 2 \Big\langle
\frac{p_x}{|p_x|},  y_\delta^\epsilon\Big\rangle = \frac{2}{|p_x|}
\Big\langle p_x,\frac{p_x}{|p_x|} - y_\delta^\epsilon\Big\rangle,
\end{split}
\end{equation*}
the discussed term becomes:
$$\int_{T_\p^{\epsilon, r_x}} \langle z, e_1\rangle  \dmN \cdot \frac{m^2|p_x|}{2}.$$

Consequently, it follows by (\ref{tre.6}) that:
\begin{equation}\label{quattro}
\begin{split}
m^2 \leq  2\cdot \frac{\delta+ 4C_xm
\int_{T_\p^{\epsilon, r_x}} |z|^2  \dmN +\omega_u(m)
\int_{T_\p^{r_x, \infty}} (1+ |z|) \dmN }{|p_x|\int_{T_\p^{\epsilon, r_x}} \langle z, e_1\rangle  \dmN}.
\end{split}
\end{equation}

\smallskip

{\bf 2.} We now analyze the bound (\ref{quattro}) in the following distinct cases. In
the first case:
\begin{equation}\label{cinque}
m^2\leq \frac{4\delta}{|p_x|\int_{T_\p^{\epsilon, r_x}} \langle z, e_1\rangle  \dmN}\leq
\Big(\frac{32 C_x \int_{T_\p^{\epsilon, r_x}} |z|^2 \dmN}{|p_x|\int_{T_\p^{\epsilon, r_x}} \langle z, e_1\rangle  \dmN}\Big)^2,
\end{equation}
where we used (\ref{tre.5}) in the second inequality. In the reverse
case, we 
get:
\begin{equation*}
m^2 \leq  \frac{16C_xm
\int_{T_\p^{\epsilon, r_x}}|z|^2\dmN +4\omega_u(m) \int_{T_\p^{r_x, \infty}}
(1+|z|)\dmN}{|p_x| \int_{T_\p^{\epsilon, r_x}}\langle z, e_1\rangle \dmN}\doteq I_1 + I_2.
\end{equation*}

When $I_2\leq I_1$, then the above yields the same bound as in (\ref{cinque}), namely:
\begin{equation*}
m\leq \frac{32 C_x \int_{T_\p^{\epsilon, r_x}} |z|^2
  \dmN}{|p_x|\int_{T_\p^{\epsilon, r_x}} \langle z, e_1\rangle  \dmN}
=  \frac{16 C_x}{|p_x|\fint_{A_\p}\langle z, e_1\rangle \;\mbox{d}\sigma(z)}\cdot\frac{2s-1}{1-s}\cdot 
\frac{r_x^{2-2s}-\epsilon^{2-2s}}{\epsilon^{1-2s} -{r_x}^{1-2s}}. 
\end{equation*}
When $I_1<I_2$, then:
\begin{equation*}
m^2 \leq  \frac{8\omega_u(m) \int_{T_\p^{r_x, \infty}}
(1+|z|)\dmN}{|p_x| \int_{T_\p^{\epsilon, r_x}}\langle z, e_1\rangle \dmN}
= \frac{8\omega_u(m)}{|p_x|\fint_{A_\p}\langle z, e_1\rangle\;\mbox{d}\sigma(z)}\cdot \frac{\frac{2s-1}{2s}
  r_x^{-2s} +r_x^{1-2s}}{\epsilon^{1-2s} - r_x^{1-2s}},
\end{equation*}
so that $ m\leq \kappa_\epsilon$ as $\fint_{A_\p}\langle z, e_1\rangle\;\mbox{d}\sigma(z)\geq
\fint_{A_\p}\langle z, e_1\rangle^2\;\mbox{d}\sigma(z)=\frac{\p-1}{N+\p-2}.$ 
Hence $m\leq m_\epsilon$ in both cases.

\smallskip

{\bf 3.} By the same analysis as in step 1, we see that the unit
vector  $\tilde y_\delta^\epsilon$ satisfying:
$$\inf_{|y|=1}\int_{T_\p^{\epsilon, \infty}(y)} u(x+z) - u(x)\dmN \geq
\int_{T_\p^{\epsilon, \infty}(-\tilde y_\delta^\epsilon)} u(x+z) - u(x) \dmN -\delta,$$
differs from the unit vector $\frac{p_x}{|p_x|}$ at most by $m_\epsilon$. Note that:
\begin{equation*}
\begin{split}
&\int_{T_\p^{\epsilon, \infty}(\tilde y_\delta^\epsilon)} L_u(x,z,z) \dmN -\delta
\\ & = \int_{T_\p^{\epsilon, \infty}(\tilde y_\delta^\epsilon)} u(x+z) -
u(x) \dmN + \int_{T_\p^{\epsilon, \infty}(-\tilde y_\delta^\epsilon)} u(x+z) - u(x) \dmN 
-\delta \leq \Lse (x) \\ & \leq \int_{T_\p^{\epsilon, \infty}( y_\delta^\epsilon)} u(x+z) - u(x) \dmN + \int_{T_\p^{\epsilon,
\infty}(-y_\delta^\epsilon)} u(x+z) - u(x) \dmN  +\delta \\ & =
\int_{T_\p^{\epsilon, \infty}( y_\delta^\epsilon)} L_u(x,z,z) \dmN +\delta,
\end{split}
\end{equation*}
which yields:
\begin{eqnarray}\label{sette}
\begin{split}
& \Big|\Lse (x) - \int_{T_\p^{\epsilon, \infty}(\frac{p_x}{|p_x|})} L_u(x,z,z) \dmN 
\Big|\leq \delta + \max \big\{ |J(y_\delta^\epsilon)|, |J(\tilde y_\delta^\epsilon|)\big\},\\ &
\mbox{where: } J(y)\doteq \int_{T_\p^{\epsilon,
    \infty}(\frac{p_x}{|p_x|})} u(x+z) - u(x+R_{y,\frac{p_x}{|p_x|}}z) + u(x-z) - u(x - R_{y,\frac{p_x}{|p_x|}}z) \dmN.
\end{split}
\end{eqnarray}

We now estimate the two terms $ J(y_\delta^\epsilon)$, $ J(\tilde
y_\delta^\epsilon)$ and show that they are bounded independently of
$\delta$. This will allow to pass $\delta\to 0$ in (\ref{sette}) and directly
conclude the claimed estimate (\ref{tre}).
We start by splitting the integral in $J(y_\delta^\epsilon)$ in two
terms: $|J(y_\delta^\epsilon)|\leq J_1+ J_2$, where:
\begin{equation*}
\begin{split}
J_1 & = \Big|\int_{T_\p^{\epsilon, r_x}(\frac{p_x}{|p_x|})} u(x+z) -
u(x+R_{y_\delta^\epsilon,\frac{p_x}{|p_x|}}z) + u(x-z) - u(x -
R_{y_\delta^\epsilon,\frac{p_x}{|p_x|}}z) \dmN\Big|
\\ & \leq \int_{T_\p^{\epsilon, r_x}(\frac{p_x}{|p_x|})} \Big|\big\langle
\nabla u(x+z), z-R_{y_\delta^\epsilon, \frac{p_x}{|p_x|}}z\big\rangle
- \big\langle \nabla u(x-z), z-R_{y_\delta^\epsilon,
  \frac{p_x}{|p_x|}}z\big\rangle\Big|\dmN \\ & \quad + \int_{T_\p^{\epsilon,
    r_x}} 2C_x m^2|z|^2\dmN \\ & \leq (4m+2m^2)\int_{T_\p^{\epsilon, r_x}}
C_x |z|^2\dmN\leq 8mC_x \int_{T_\p^{\epsilon, r_x}} |z|^2\dmN.
\end{split}
\end{equation*}
The remaining estimate is:
\begin{equation*}
\begin{split}
J_2 & = \Big|\int_{T_\p^{r_x, \infty}(\frac{p_x}{|p_x|})} u(x+z) -
u(x+R_{y_\delta^\epsilon,\frac{p_x}{|p_x|}}z) + u(x-z) - u(x - R_{y_\delta^\epsilon,\frac{p_x}{|p_x|}}z) \dmN\Big| 
\\ & \leq 2\omega_u(m) \int_{T_\p^{ r_x, \infty}}(1+|z|)\dmN 
\end{split}
\end{equation*}

In conclusion, we obtain:
$$ |J(y_\delta^\epsilon)| \leq 8m_\epsilon C_x \int_{T_\p^{\epsilon, r_x}}
|z|^2\dmN + 2\omega_u(m_\epsilon) \int_{T_\p^{ r_x, \infty}}(1+|z|)\dmN.$$
Clearly, $|J(\tilde y_\delta^\epsilon)| $ enjoys the same
bound. The result in Lemma now follows by (\ref{sette}).
\end{proof}

\medskip

By Remark \ref{rem4} (iii) we see that in case of $u$ which is
Lipschitz on $\R^N\setminus \bar B_{r_x}$, the order of the error
bounding quantity in Theorem \ref{thm4.7} is
$C(s)\cdot(\epsilon^{4s-1}+\epsilon^2)$ as $\epsilon\to 0+$, where
$C(s)$ blows up as $s\to 1-$. This drawback will be remedied by means
of another asymptotic expansion in Theorem \ref{thm6}, proved in
section \ref{sec4}. We are now ready to give:

\noindent {\bf Proof of Theorem \ref{thm4.7}}

In view of (\ref{C2}) we get:
\begin{equation*}
\begin{split}
\Big|\Ls (x) - \int_{T_\p^{\epsilon, \infty}(\frac{p_x}{|p_x|})} \Lp(x,z,z)\dmN\Big|
& \leq \int_{T_\p^{0,\epsilon}(\frac{p_x}{|p_x|})} |\Lp(x,z,z)|\dmN \\
& \leq C(N,s) |A_\p| \cdot C_x\frac{\epsilon^{2-2s}}{1-s}.
\end{split}
\end{equation*}
Consequently, Lemma \ref{prop3} yields:
\begin{equation*}
\begin{split}
\Big|\Lse (x&) - \Ls (x)\Big| \leq \frac{4 C(N,s) |A_\p|}{1-s} \cdot C_x \big(r_x^{2-2s} -
  \epsilon^{2-2s}\big) \cdot m_\epsilon \\ & + \frac{C(N,s) |A_\p|}{s}\cdot \Big(r_x^{-2s} +
\frac{2s}{2s-1}r_x^{1-2s}\Big)\cdot \omega_u(m_\epsilon) +
\frac{C(N,s) |A_\p|}{1-s} \cdot C_x\epsilon^{2-2s}.
\end{split}
\end{equation*}
The result follows by collecting terms and scaling by the factor: $\frac{s}{C(N,s)|A_\p|}\epsilon^{2s}$.
\endproof

\section{A local - non-local asymptotic expansion}\label{sec4}

In this section we present a refined version of the argument in
Theorem \ref{thm4.7}. We need one more estimate before giving the
proof of expansion (\ref{dwa}) in Theorem \ref{thm6}.

\begin{proposition}\label{prop5}
Let $u\in C^2(\bar B_{r_x})$ satisfy: $p_x\doteq \nabla
u(x)\neq 0$. For every $\epsilon< r_x$ such that $\epsilon
|\nabla^2u(x)|\leq |p_x|$, denote $B_\epsilon = B_\epsilon(x)$. Then there holds:
\begin{equation*}
\begin{split}
\Big| &\frac{C(N,s) |A_\p|}{(N+p-2)(1-s)}\cdot \epsilon^{-2s}\Big(\frac{\p-2}{2}
\big(\sup_{B_\epsilon} u + \inf_{B_\epsilon} u \big) +
(N+2)\fint_{B_\epsilon} u(y)\;\mathrm{d}y - (N+\p) u(x)\Big)
\\ & \qquad - \int_{T_\p^{0,\epsilon}(\frac{p_x}{|p_x|})} L_u(x,z,z)\dmN\Big| \leq 
\frac{C(N,s)|A_\p|}{1-s}\cdot\epsilon^{2-2s}
\sup_{y\in B_\epsilon}|\nabla^2u(y) - \nabla^2u(x)| \\ &
\qquad\qquad\qquad \qquad\qquad\qquad \qquad\qquad \quad + \frac{2C(N,s) |A_\p|}{1-s}
\cdot\frac{\p-2}{N+\p-2} \cdot \epsilon^{3-2s} \frac{|\nabla^2u(x)|^2}{|p_x|}.
\end{split}
\end{equation*}
\end{proposition}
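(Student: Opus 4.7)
\medskip

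The plan is to Taylor-expand $u\in C^2(\bar B_{r_x})$ around $x$ to second order on both sides of the proposed inequality and show that, modulo controlled errors, each reduces to the same leading quantity $\tfrac{C(N,s)|A_\p|}{(N+\p-2)(2-2s)}\epsilon^{2-2s}|p_x|^{2-\p}\Delta_\p u(x)$. The matching rests on two algebraic ingredients: Lemma \ref{lem_Tp}(ii) on the non-local side, and the $\p$-Laplace identity $(\p-2)\Delta_\infty u(x) + \Delta u(x) = |p_x|^{2-\p}\Delta_\p u(x)$ combined with $\fint_{B_\epsilon}(y-x)\otimes(y-x)\,\mathrm{d}y = \tfrac{\epsilon^2}{N+2}Id_N$ on the local side.

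On the non-local side, Taylor's formula with remainder yields $u(x\pm z) = u(x) \pm \langle p_x, z\rangle + \tfrac{1}{2}\langle \nabla^2 u(x):z\otimes z\rangle + R(x,\pm z)$ with $|R(x,\pm z)|\leq \tfrac{|z|^2}{2}\omega$ for $z\in B_\epsilon$, where $\omega \doteq \sup_{y\in B_\epsilon}|\nabla^2 u(y) - \nabla^2 u(x)|$. The odd terms cancel in $L_u(x,z,z) = \langle \nabla^2 u(x):z\otimes z\rangle + R(x,z) + R(x,-z)$; integrating against $\mathrm{d}\mu_s^N$ on $T_\p^{0,\epsilon}(p_x/|p_x|)$ and invoking Lemma \ref{lem_Tp}(ii) produces the quadratic leading term above, while the remainder is bounded by $\omega\int_{T_\p^{0,\epsilon}}|z|^2\dmN = \tfrac{C(N,s)|A_\p|}{2(1-s)}\epsilon^{2-2s}\omega$.

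On the local side, the isotropy identity directly gives $\fint_{B_\epsilon}u - u(x) = \tfrac{\epsilon^2}{2(N+2)}\Delta u(x) + O(\epsilon^2\omega)$. The delicate piece is the sup/inf. I would first replace $u$ by its quadratic Taylor polynomial $\ell(y) = u(x) + \langle p_x, y-x\rangle + \tfrac{1}{2}\langle \nabla^2 u(x)(y-x), y-x\rangle$, paying $\tfrac{\epsilon^2}{2}\omega$ per term. The hypothesis $\epsilon|\nabla^2 u(x)|\leq |p_x|$ guarantees that any interior critical point $y = x - (\nabla^2 u(x))^{-1}p_x$ of $\ell$ satisfies $|y-x|\geq |p_x|/|\nabla^2 u(x)|\geq \epsilon$, so the extrema on $\bar B_\epsilon$ are attained on $\partial B_\epsilon$. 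Writing $y^* = x + \epsilon v^*$ with $v^*\in S^{N-1}$ and solving the boundary Lagrange condition $\epsilon p_x + \epsilon^2 \nabla^2 u(x) v^*_\pm = \mu_\pm v^*_\pm$ by perturbation off the zeroth-order candidates $\pm p_x/|p_x|$ yields $\bigl|v^*_\pm \mp \tfrac{p_x}{|p_x|}\bigr|\lesssim \epsilon|\nabla^2 u(x)|/|p_x|$. Substituting and expanding to next order gives
$$\sup_{\bar B_\epsilon}\ell = u(x) + \epsilon|p_x| + \tfrac{\epsilon^2}{2}\Delta_\infty u(x) + O\!\Bigl(\tfrac{\epsilon^3|\nabla^2 u(x)|^2}{|p_x|}\Bigr),$$
and symmetrically for the infimum, so $\sup_{\bar B_\epsilon}\ell + \inf_{\bar B_\epsilon}\ell = 2u(x) + \epsilon^2\Delta_\infty u(x) + O(\epsilon^3|\nabla^2 u(x)|^2/|p_x|)$.

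Combining the three local pieces with weights $\tfrac{\p-2}{2}, N+2, -(N+\p)$ and applying the $\p$-Laplace identity yields $\tfrac{\p-2}{2}(\sup + \inf) + (N+2)\fint u - (N+\p)u(x) = \tfrac{\epsilon^2}{2}|p_x|^{2-\p}\Delta_\p u(x) + \text{(pooled }\omega\text{-error)} + O(\epsilon^3|\nabla^2 u(x)|^2/|p_x|)$. Multiplication by $\tfrac{C(N,s)|A_\p|}{(N+\p-2)(1-s)}\epsilon^{-2s}$ converts the leading term, via Lemma \ref{lem_Tp}(ii), into $\int_{T_\p^{0,\epsilon}(p_x/|p_x|)}\langle \nabla^2 u(x):z\otimes z\rangle\dmN$, which cancels against the leading non-local term, and rescales the errors into exactly the two stated contributions. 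The main obstacle is the sup/inf analysis: a crude pointwise bound on $\sup\ell$ would yield an $O(\epsilon^2|\nabla^2 u(x)|)$ correction, fatal after the $\epsilon^{-2s}$ rescaling; the refined cubic estimate requires the Lagrange-multiplier perturbation, exploiting that the optimizers shift from $\pm p_x/|p_x|$ by only $O(\epsilon|\nabla^2 u(x)|/|p_x|)$, which promotes the correction from quadratic to cubic in $\epsilon$.
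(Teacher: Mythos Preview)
Your proposal is correct and follows essentially the same strategy as the paper: Taylor-expand the non-local integral via Lemma~\ref{lem_Tp}(ii), Taylor-expand the local combination via $|\nabla u|^{2-\p}\Delta_\p u=\Delta u+(\p-2)\Delta_\infty u$, match the common leading term $\frac{C(N,s)|A_\p|}{(N+\p-2)(2-2s)}\epsilon^{2-2s}|p_x|^{2-\p}\Delta_\p u(x)$, and collect the two error contributions. The only technical divergence is in controlling $\sup_{B_\epsilon}\ell+\inf_{B_\epsilon}\ell$: where you invoke a Lagrange-multiplier perturbation, the paper (after normalizing to $\epsilon=1$, $|p|=1$, $|B|\le 1$) uses the direct comparison $\ell(z_{\max})\ge\ell(p)$ to read off $|z_{\max}-p|\le 2|B|$ and hence $|\sup\ell+\inf\ell-\langle B:p\otimes p\rangle|\le 4|B|^2$ with the explicit constant that propagates to the stated bound, bypassing any perturbation machinery; note also that your interior-critical-point argument tacitly assumes $\nabla^2u(x)$ invertible, whereas the simpler observation $|\nabla\ell(y)|\ge|p_x|-\epsilon|\nabla^2u(x)|>0$ on the open ball $B_\epsilon$ covers the general case.
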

\begin{proof}
An application of Taylor's expansion and the identity in Lemma \ref{lem_Tp} (iii)
results in the following estimate:
\begin{equation*}
\begin{split}
\Big| & \int_{T_\p^{0,\epsilon}(\frac{p_x}{|p_x|})} L_u(x,z,z)\dmN
- \frac{C(N,s) |A_\p|}{(N+\p-2) (2-2s)}\cdot \epsilon^{2-2s}|p_x|^{2-\p}\Delta_\p u(x)\Big| \\ & \leq 
\int_{T_\p^{0,\epsilon}(\frac{p_x}{|p_x|})} |L_u(x,z,z) - \langle
\nabla^2 u(x) : z\otimes z\rangle|\dmN \\ & \leq \int_{T_\p^{0,\epsilon}}
|z|^2\dmN\cdot \sup_{y\in B_\epsilon}|\nabla^2u(y) - \nabla^2u(x)| 
= \frac{C(N,s)|A_\p|}{2-2s}\cdot \epsilon^{2-2s}\sup_{y\in B_\epsilon}|\nabla^2u(y) - \nabla^2u(x)|.
\end{split}
\end{equation*}
We now invoke the following folklore claim (whose proof we recall below):
\begin{equation}\label{otto}
\begin{split}
\Big|&(\p-2)\big( \sup_{B_\epsilon} u + \inf_{B_\epsilon} u\big) +2(N+2)\fint_{B_\epsilon}u(y)\;\mathrm{d}y
-2(N+\p)u(x) - \epsilon^2|p_x|^{2-\p}\Delta_\p u(x)\Big| \\ & \leq
4\epsilon^3(\p-2) \frac{|\nabla^2u(x)|^2}{|p_x|}
+ \epsilon^2 (N+\p-2)\cdot \sup_{y\in B_\epsilon}|\nabla^2u(y) - \nabla^2u(x)|.
\end{split}
\end{equation}
Summing up the scaled versions of the last two displayed formulas completes the argument.
\end{proof}

\medskip

\noindent {\bf Proof of claim (\ref{otto}).}

{\bf 1.} We first deduce the bound in the particular case when
$\epsilon=1$, $x=0$ and $u$ is a quadratic polynomial
with gradient given by a unit vector $p\in\R^N$ and Hessian given by a
symmetric matrix $B\in \R^{N\times N}$ satisfying $|B|\leq 1$:
$$u(z)=\langle p, z\rangle + \frac{1}{2}\langle B: z\otimes z\rangle.$$
It is straightforward that:
\begin{equation}\label{one}
\fint_{B_1} u(z)\;\mbox{d}z=\frac{1}{2}\Big\langle
B:\fint_{B_1}z\otimes z\;\mbox{d}z \Big\rangle = \frac{1}{2(N+2)}\big\langle Id_N:B\big\rangle
= \frac{1}{2(N+2)} \Delta u(0).
\end{equation}
In order to address the nonlinear averaging quantity $\sup_{B_1} u +
\inf_{B_1} u$, consider $z_{max}\in \bar B_1$ that is a maximizer of $u$. Note that 
$|z_{max}|=1$, because $|\nabla u(z)|= |p+Bz|> 1 -|B|\geq 0$
for all $|z|\leq 1$. Further, since $u(z_{max})\geq u(p)$, there holds:
$$ \langle p, z_{max}\rangle\geq 1+ \frac{1}{2}\langle B: p\otimes p -
z_{max}\otimes z_{max}\rangle \geq 1 - |z_{max}-p|\cdot |B|.$$
Consequently: $|z_{max}-p|^2 = 2- 2\langle z_{max}, p\rangle \leq 2 |z_{max}-p|\cdot |B|$, so that:
$$|z_{max}-p|\leq 2|B|.$$
Noting that $\langle p, z_{max}-p\rangle\leq 0$, we hence arrive at:
$$0\leq u(z_{max})-u(p) = \langle p, z_{max}-p\rangle +\frac{1}{2}\langle B:
z_{max}\otimes z_{max} - p\otimes p \rangle\leq |z_{max}-p|\cdot
|B|\leq 2 |B|^2.$$

An entirely similar argument yields the bound for a minimizer
$z_{min}$ of $u$ on $\bar B_1$:
$$0\geq u(z_{min})-u(-p) \geq -2 |B|^2,$$
which results in the bound:
\begin{equation}\label{two}
\begin{split}
\big| \sup_{B_1} u + \inf_{B_1} u - \Delta_\infty u(0)\big| & =
\big|u(z_{max})+u(z_{min}) - \langle B: p\otimes p\rangle\big|
\\ & = \big|u(z_{max}) - u(p) + u(z_{min}) - u(-p)\big| \leq 4|B|^2.
\end{split}
\end{equation}
The estimate (\ref{otto}) follows in the present case summing
(\ref{one}) scaled by $2(N+2)$ and (\ref{two}) scaled by $\p-2$.

\smallskip

{\bf 2.} For the general case, we may still assume $x=0$ and
$u(0)=0$. Define $\bar u(y)= \langle p_x, y\rangle +
\frac{1}{2}\langle \nabla^2 u(0):y\otimes y\rangle$ and note that:
\begin{equation}\label{nove}
\begin{split}
 \frac{2(N+2)}{N}\cdot\Big |\fint_{B_\epsilon} u(y)\;\mbox{d}y - \fint_{B_\epsilon} & \bar
u(y)\;\mbox{d}y\Big|,\quad 
\Big|\big(\sup_{B_\epsilon} u + \inf_{B_\epsilon} u\big) -
\big(\sup_{B_\epsilon}\bar u + \inf_{B_\epsilon}\bar u\big) \Big|
\\ & \leq \epsilon^2\sup_{y\in B_\epsilon}|\nabla^2 u(y) -
\nabla^2 u(0)|,
\end{split}
\end{equation}
by means of Taylor's expansion. We now apply the conclusion of step 1 to the rescaled function:
$$\frac{1}{\epsilon |p_x|} \bar u(\epsilon z)=
\big\langle\frac{p_x}{|p_x|}, z\big\rangle + \frac{1}{2}\Big\langle \epsilon
\frac{\nabla^2 u(0)}{|p_x|}:z\otimes z\Big\rangle\quad \mbox{ for
}\; z\in \bar B_1.$$ 
It follows that for all $\epsilon$ satisfying $\epsilon |\nabla^2 u(0)|\leq |p_x|$, we get:
\begin{equation*}
\begin{split}
\Big|& \frac{\p-2}{\epsilon |p_x|}\big(\sup_{B_\epsilon}\bar u +
\inf_{B_\epsilon}\bar u\big) + \frac{2(N+2)}{\epsilon |p_x|}\fint_{B_\epsilon}
\bar u(y)\;\mbox{d}y
- \Big( \epsilon \frac{\Delta u(0)}{|p_x|} + (\p-2)\Big\langle
\epsilon\frac{\nabla^2 u(0)}{|p_x|}:\frac{p_x}{|p_x|}
\otimes \frac{p_x}{|p_x|}\Big\rangle \Big)\Big| \\ & \leq
4\epsilon^2\cdot (\p-2) \frac{|\nabla^2 u(0)|^2}{|p_x|^2},
\end{split}
\end{equation*}
which yields:
$$\big|(\p-2)\big(\sup_{B_\epsilon}\bar u +
\inf_{B_\epsilon}\bar u\big) +2(N+2) \fint_{B_\epsilon}\bar
u(y)\;\mbox{d}y - \epsilon^2 |p_x|^{2-\p}\Delta_\p u(0) \big| \leq
4\epsilon^3 (\p-2)\frac{|\nabla^2u(0)|^2}{|p_x|}.$$
Combined together with (\ref{nove}), the above estimate ends the proof of (\ref{otto}).
\endproof

\medskip

\begin{theorem}\label{thm6}
Under the same assumptions and notation as in Theorem \ref{thm4.7},
for every $\epsilon< r_x$ such that $\epsilon
|\nabla^2\phi(x)|\leq |p_x|$,  denote $B_\epsilon=B_\epsilon(x)$ and 
consider the average:
\begin{equation*}
\begin{split}
\bar\A_\epsilon u(x) = &\; \frac{(1-s)(N+\p-2)}{N+\p-2+2s}\cdot\frac{1}{2}\Big(\sup_{|y|=1}
\fint_{T_\p^{\epsilon, \infty}(y)} u(x+z)\dmN + \inf_{|y|=1}\fint_{T_\p^{\epsilon, \infty}(y)}
u(x+z)\dmN \Big) \\ & + \frac{s(\p-2)}{N+\p-2+2s}\cdot\frac{1}{2}
\big(\sup_{B_\epsilon}u + \inf_{B_\epsilon}u \big) + \frac{s(N+2)}{N+\p-2+2s}\fint_{B_\epsilon}
u(y)\;\mathrm{d}y.
\end{split}
\end{equation*}
Then there holds:
\begin{equation*}
\begin{split}
\Big|& \bar\A_\epsilon u(x) - u(x) - \frac{(1-s)s}{C(N,s)
  |A_\p|}\cdot\frac{N+\p-2}{N+\p-2+2s}\cdot \epsilon^{2s} \Ls (x) \Big|\\ & \leq 
\frac{4s (N+\p-2)}{N+\p-2+2s} C_x (r_x^{2-2s}-\epsilon^{2-2s})\cdot \epsilon^{2s}m_\epsilon
\\ & \quad + \frac{(1-s)(N+\p-2)}{N+\p-2+2s}\big(r_x^{-2s}+\frac{2s}{2s-1}r_x^{1-2s}\big)
\cdot \epsilon^{2s}\omega_u(m_\epsilon) \\ & \quad 
+ \frac{s(N+\p-2)}{N+\p-2+2s}\cdot \epsilon^2 \sup_{y\in B_\epsilon}\big|\nabla^2u(y)-\nabla^2u(x)\big|
+\frac{2s(\p-2)}{N+\p-2+2s}\cdot\epsilon^3\frac{|\nabla^2 u(x)|^2}{|p_x|},
\end{split}
\end{equation*}
where the quantity $m_\epsilon$ is as in the statement of Lemma \ref{prop3}.
\end{theorem}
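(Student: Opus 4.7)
The strategy is to decompose $\bar{\A}_\epsilon u(x) - u(x)$ into its non-local and local constituents according to the definition of $\bar{\A}_\epsilon$, and then invoke Lemma~\ref{prop3} for the non-local piece and Proposition~\ref{prop5} for the local combination. The key design feature of $\bar{\A}_\epsilon$ is that, upon doing so, the leading-order parts of the two pieces express themselves as integrals of $\Lp(x,z,z)$ against $\mathrm{d}\mu_s^N$ over the complementary truncated cones $T_\p^{\epsilon,\infty}(\tfrac{p_x}{|p_x|})$ and $T_\p^{0,\epsilon}(\tfrac{p_x}{|p_x|})$, with matching coefficients that assemble them into the full $\Ls(x)$.

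More specifically, a direct computation based on the normalization $\mu_s^N(T_\p^{\epsilon,\infty}) = \frac{C(N,s)|A_\p|}{2s\epsilon^{2s}}$ recorded in Proposition~\ref{lem1} yields the identity $\A_\epsilon u(x) - u(x) = \frac{s\epsilon^{2s}}{C(N,s)|A_\p|}\Lse(x)$. Multiplying by the weight $\frac{(1-s)(N+\p-2)}{N+\p-2+2s}$ and inserting the approximation from Lemma~\ref{prop3} produces a contribution equal, modulo an error, to $\frac{(1-s)s(N+\p-2)}{(N+\p-2+2s)C(N,s)|A_\p|}\epsilon^{2s}\int_{T_\p^{\epsilon,\infty}(p_x/|p_x|)}\Lp(x,z,z)\dmN$. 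In parallel, rewriting the local piece of $\bar{\A}_\epsilon u(x) - u(x)$ as $\frac{s}{N+\p-2+2s}\bigl[(N+2)(\fint_{B_\epsilon}u\,\mathrm{d}y - u(x)) + \tfrac{\p-2}{2}(\sup_{B_\epsilon}u + \inf_{B_\epsilon}u - 2u(x))\bigr]$ and applying Proposition~\ref{prop5} yields a complementary contribution equal, modulo a second error, to $\frac{(1-s)s(N+\p-2)}{(N+\p-2+2s)C(N,s)|A_\p|}\epsilon^{2s}\int_{T_\p^{0,\epsilon}(p_x/|p_x|)}\Lp(x,z,z)\dmN$. The two prefactors coincide exactly, so adding these contributions merges the integrals into a single integral over $T_\p^{0,\infty}(p_x/|p_x|) = T_\p^{\epsilon,\infty}(p_x/|p_x|) \cup T_\p^{0,\epsilon}(p_x/|p_x|)$, which by Definition~\ref{def_operators} equals $\Ls(x)$. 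This is precisely the stated main term.

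What remains is error bookkeeping. Multiplying the bound from Lemma~\ref{prop3} by $\frac{(1-s)s(N+\p-2)}{(N+\p-2+2s)C(N,s)|A_\p|}\epsilon^{2s}$ produces the first two error terms in the statement, those involving $m_\epsilon$ and $\omega_u(m_\epsilon)$. Multiplying the bound from Proposition~\ref{prop5} by $\frac{s(N+\p-2)(1-s)}{(N+\p-2+2s)C(N,s)|A_\p|}\epsilon^{2s}$ produces the last two, those involving $\sup_{y\in B_\epsilon}|\nabla^2u(y)-\nabla^2u(x)|$ and $\epsilon^3|\nabla^2u(x)|^2/|p_x|$. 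The main obstacle is not analytic but algebraic: the precise weights $(1-s)(N+\p-2)$, $s(N+2)$, $s(\p-2)$, all normalized by $N+\p-2+2s$, are dictated by the requirement that the coefficients in front of the two complementary cone integrals agree, and without this cancellation the argument would not close. Once the weights are verified to produce this alignment, no new analytic input beyond Lemma~\ref{prop3} and Proposition~\ref{prop5} is required, and the hypotheses $\epsilon < r_x$ and $\epsilon|\nabla^2 u(x)| \leq |p_x|$ are exactly what those two results demand.
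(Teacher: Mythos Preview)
Your proposal is correct and follows essentially the same approach as the paper's proof, which simply states that one sums the formulas in Lemma~\ref{prop3} and Proposition~\ref{prop5} and multiplies by the factor $\frac{(1-s)s}{C(N,s)|A_\p|}\cdot\frac{N+\p-2}{N+\p-2+2s}\cdot \epsilon^{2s}$. Your write-up is in fact more detailed than the paper's, explicitly spelling out the complementary-cone mechanism and the role of the weight choices, and the error bookkeeping you describe matches the stated bounds exactly.
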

\begin{proof}
We sum up formulas in Lemma \ref{prop3} and Proposition \ref{prop5}, and multiply the
result by the factor:
$\frac{(1-s)s}{C(N,s)|A_\p|}\cdot\frac{N+\p-2}{N+\p-2+2s}\cdot \epsilon^{2s}$. 
\end{proof}

\begin{remark}\label{rem7}
\begin{enumerate}[leftmargin=7mm]
\item[(i)] Analysis similar to Remark \ref{rem4} allows for computing
  the order of the error bound in Theorem \ref{thm6} when $u\in
  C^{0,\alpha}\big(\R^N\setminus \bar B_{r_x}\big)$. In 
  particular, when $\alpha=1$ then the bounding quantity becomes:
\begin{equation*}
\begin{split}
&C_{N,\p,s} \cdot \Big(C\big(C_x, \frac{1}{|p_x|}, \mbox{Lip}_u\big)\cdot C(r_x)
\epsilon^{4s-1} + C\big(|\nabla^2 u(x)|,
\frac{1}{|p_x|}\big)\epsilon^3 +  o(\epsilon^2)\Big).
\end{split}
\end{equation*}
When additionally $u\in C^{2,1}(B_{r_x})$,
the above quantity has order $\epsilon^{4s-1}+\epsilon^3$, which
further reduces to $\epsilon^3$ when $s=1$.

\item[(ii)] For a more precise analysis of the asymptotic expansion when $s\to 1-$, note that:
\begin{equation*}
\begin{split}
& \kappa_\epsilon\leq \sup\Big\{ m;~ m\in [0,2] \; \mbox{ and } \; m^2\leq
 \frac{32\;\omega_u(m)}{|p_x|}\cdot\frac{N+\p-2}{\p-1}\cdot\frac{
  r_x^{-2s} + r_x^{1-2s}}{2s-1}\epsilon^{2s-1}\Big\}, \\ &
\frac{8 \|\nabla^2 u\|_{L^\infty(B_{r_x})}}{|p_x|}\cdot\frac{2s-1}{1-s}\cdot 
\frac{r_x^{2-2s}-\epsilon^{2-2s}}{\epsilon^{1-2s} -{r_x}^{1-2s}}
\leq \frac{8 \|\nabla^2u\|_{L^\infty(B_{r_x})}}{|p_x|}\cdot
16 r_x^{2-2s}|\ln\epsilon|\epsilon^{2s-1}.
\end{split}
\end{equation*}
The first bound above is valid when $\epsilon<\frac{r_x}{2}$ (see
Remark \ref{rem4} (i)), while for the second bound we used that:
$r_x^{2-2s}-\epsilon^{2-2s}\leq (2-2s) (\ln r_x - \ln\epsilon)
r_x^{2-2s}\leq 4(1-s)|\ln\epsilon| r_x^{2-2s}$, valid for all $\epsilon<e^{-|\ln r_x|}$.
It is thus clear that $m_\epsilon \leq o(1)$ as $\epsilon\to 0+$, independently
of $s\in (\frac{1}{2},1)$ bounded away from $\frac{1}{2}$.
In particular, for each fixed $\epsilon$, the bounding quantity in
Theorem \ref{thm6} converges to: 
$$2\frac{\p-2}{N+\p}\cdot \epsilon^3\frac{|\nabla^2 u(x)|^2}{|p_x|}
+ \frac{N+\p-2}{N+\p} \cdot \epsilon^2\sup_{y\in B_\epsilon}\big|\nabla^2
u(y)-\nabla^2 u(x)\big|$$
as $s\to 1-$, which is consistent with (\ref{otto}).
\end{enumerate}
\end{remark}

\section{The averaging operator $\A_\epsilon$ and its dynamic
  programming principle}\label{sec_dppe}

Let $\D\subset\R^N$ be open, bounded and let $F:\R^N\to \R$ be
bounded, Borel. In this section we discuss the non-local Dirichlet-type problem in:
\begin{equation}\label{dppe}\tag*{${\mathrm{(DPP)}}_\epsilon$}
u(x) = \left\{\begin{array}{ll} \A_\epsilon u(x) &
    ~~\mbox{for } x\in\D\\ F(x) &  ~~\mbox{for } x\in\R^N\setminus \D.
\end{array}\right.
\end{equation}

Equivalently, the above equation can be written as $u=S_\epsilon u$, where
the operator $S_\epsilon$ applied on a bounded Borel function
$v:\R^N\to\R$ returns the bounded Borel function:
\begin{equation}\label{Se}
S_\epsilon v = \One_{\D} \cdot \A_\epsilon v + \One_{\R^N\setminus \D} \cdot F 
\end{equation}

\smallskip

The main result of this section is the following observation:

\begin{theorem}\label{th_exists}
For any bounded Borel data $F:\R^N\to \R$, the problem \ref{dppe}
has the unique bounded Borel solution $u_\epsilon^F:\R^N\to\R$ and there
holds: $\|u_\epsilon^F\|_{L^\infty}\leq \|F\|_{L^\infty}$. Moreover, the
solution operator to \ref{dppe} is monotone, that is $F\leq \bar F$
implies $u_\epsilon^F\leq u_\epsilon^{\bar F}$.
\end{theorem}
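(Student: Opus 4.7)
The plan is to apply Banach's fixed-point theorem to $S_\epsilon$ on the set $X_F$ of bounded Borel functions $v:\R^N\to\R$ with $v|_{\R^N\setminus\D}=F$, equipped with the uniform norm; this is a complete metric space, since a uniform limit of bounded Borel functions matching $F$ outside $\D$ is again bounded Borel with the same boundary condition. First I would check that $S_\epsilon$ maps $X_F$ into itself: boundedness is immediate from the averaging property $|\A_\epsilon v|\le\|v\|_{L^\infty}$, and for Borel measurability of $\A_\epsilon v$ I would set $g(y,x)=\fint_{T_\p^{\epsilon,\infty}(y)}v(x+z)\dmN$ and observe that (i) for each fixed $y$, $x\mapsto g(y,x)$ is Borel by Fubini applied to the jointly Borel integrand $(x,z)\mapsto v(x+z)|z|^{-N-2s}\one_{T_\p^{\epsilon,\infty}(y)}(z)$, and (ii) for each fixed $x$, $y\mapsto g(y,x)$ is continuous by dominated convergence, since $\one_{T_\p^{\epsilon,\infty}(y_n)}(z)\to\one_{T_\p^{\epsilon,\infty}(y)}(z)$ for Lebesgue-a.e.\ $z$ as $y_n\to y$. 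These combined imply that $\sup_{|y|=1}g(y,\cdot)$ and $\inf_{|y|=1}g(y,\cdot)$ agree with the respective extrema over a countable dense subset of $S^{N-1}$, and are therefore Borel functions of $x$.

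Next I would prove the strict contraction on $X_F$. Fix $R>0$ with $\D\subset B_R(0)$ and set $\beta=(\epsilon/(2R))^{2s}\in(0,1)$ (if $\epsilon\ge 2R$ the claim is trivial). For $v,\bar v\in X_F$ the difference $v-\bar v$ vanishes outside $\D$, and for $x\in\D$ one has $\D-x\subset B_{2R}(0)$, so for every $|y|=1$,
\[
\Big|\fint_{T_\p^{\epsilon,\infty}(y)}(v-\bar v)(x+z)\dmN\Big|
\le \|v-\bar v\|_{L^\infty}\cdot\frac{\mu_s^N(T_\p^{\epsilon,2R}(y))}{\mu_s^N(T_\p^{\epsilon,\infty})}
=(1-\beta)\|v-\bar v\|_{L^\infty},
\]
using the explicit formula $\mu_s^N(T_\p^{a,b})=\frac{C(N,s)|A_\p|}{2s}(a^{-2s}-b^{-2s})$. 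Taking sup and inf in $y$ and using the elementary bounds $|\sup f-\sup g|,|\inf f-\inf g|\le\sup|f-g|$ yields $\|S_\epsilon v-S_\epsilon\bar v\|_{L^\infty}\le(1-\beta)\|v-\bar v\|_{L^\infty}$.

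Banach's theorem then delivers the unique fixed point $u_\epsilon^F\in X_F$. The bound $\|u_\epsilon^F\|_{L^\infty}\le\|F\|_{L^\infty}$ follows because the closed set $\{v\in X_F:\|v\|_{L^\infty}\le\|F\|_{L^\infty}\}$ is non-empty (containing $F\one_{\R^N\setminus\D}$) and $S_\epsilon$-invariant by the averaging property, so must contain the fixed point. For monotonicity, given $F\le\bar F$, I would iterate $S_\epsilon$ starting from $v_0=F\one_{\R^N\setminus\D}-M\one_\D$ and $\bar v_0=\bar F\one_{\R^N\setminus\D}-M\one_\D$ where $M=\max(\|F\|_{L^\infty},\|\bar F\|_{L^\infty})$; then $v_0\le\bar v_0$, and $S_\epsilon$ preserves order since sup, inf, and integrals against positive measures are all monotone. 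Passing to the uniform limit in $S_\epsilon^n v_0\le S_\epsilon^n\bar v_0$ yields $u_\epsilon^F\le u_\epsilon^{\bar F}$. The main obstacle is the Borel-measurability step, since an uncountable supremum of Borel functions need not be Borel; it is resolved via the continuity-in-$y$ observation, which reduces the extremum over $|y|=1$ to one over a countable set.
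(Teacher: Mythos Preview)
Your proof is correct and cleaner in some respects than the paper's, but it proceeds by a different mechanism. The paper establishes existence by monotone iteration: it starts from $v_0\equiv\inf F$, shows that the iterates $(S_\epsilon)^n v_0$ form a bounded nondecreasing sequence, takes the pointwise limit $v$, and verifies $v=S_\epsilon v$ via the monotone convergence theorem. Uniqueness is then proved separately by exactly the contraction estimate you use (with $\mathrm{diam}\,\D$ in place of your $2R$), but the paper does not invoke Banach's theorem. Your route packages existence and uniqueness into a single fixed-point argument, which is tidier for this statement in isolation.

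The other substantive difference is in the measurability step. You reduce the supremum over $|y|=1$ to a countable one via continuity of $y\mapsto g(y,x)$, which suffices to make $\A_\epsilon v$ Borel. The paper instead proves the stronger Lemma~\ref{conti}, that $\A_\epsilon u$ is in fact uniformly continuous on $\R^N$ for any bounded Borel $u$; this is overkill for Theorem~\ref{th_exists} itself but is reused later (in constructing Borel-measurable near-optimal strategies for the game, Lemma~\ref{lem_ue}). So the paper's detour through Lemma~\ref{conti} pays off downstream, whereas your argument is self-contained but would need to be supplemented if one later needs the continuity of $\A_\epsilon u$.
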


\smallskip

Before the proof, we derive another useful property:

\begin{lemma}\label{conti}
Let $u:\R^N\to \R$ be bounded Borel. Then the following functions of $x$:
$$\inf_{|y|=1} \fint_{T_\p^{\epsilon, \infty}(y)}  u(x+z)\dmN,\qquad
    \sup_{|y|=1} \fint_{T_\p^{\epsilon, \infty}(y)}  u(x+z)\dmN, \qquad \A_\epsilon u,$$
are uniformly continuous on $\R^N$.
\end{lemma}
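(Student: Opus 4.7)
\textbf{Proof plan for Lemma \ref{conti}.} The plan is to recognize each integral $\int_{T_\p^{\epsilon, \infty}(y)} u(x+z)\dmN$ as a convolution with an $L^1$ kernel, and then exploit continuity of translation in $L^1$ together with rotational invariance of the family of cones $\{T_\p^{\epsilon,\infty}(y)\}_{|y|=1}$.

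First I would introduce, for each unit vector $y$, the kernel
\[
K_y(z) \doteq C(N,s)\frac{\mathds{1}_{T_\p^{\epsilon,\infty}(y)}(z)}{|z|^{N+2s}},
\]
which lies in $L^1(\R^N)$ with $\|K_y\|_{L^1}=\mu_s^N(T_\p^{\epsilon,\infty}(y)) = C(N,s)|A_\p|/(2s\epsilon^{2s})$ independent of $y$ by the rotational invariance (Proposition \ref{lem1}). Substituting $w=x+z$ in the defining integral produces $F_y(x)\doteq \int u(w) K_y(w-x)\,\mathrm{d}w$, whose increment between $x$ and $\bar x$ satisfies, with $h=x-\bar x$,
\[
|F_y(x)-F_y(\bar x)|\leq \|u\|_{L^\infty}\,\|\tau_h K_y - K_y\|_{L^1},
\]
where $\tau_h$ denotes translation by $h$. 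Because $T_\p^{\epsilon,\infty}(y) = R_{y,e_1}T_\p^{\epsilon,\infty}$, one has $K_y(z)=K_{e_1}(R_{e_1,y}z)$, and a change of variables together with $|R_{e_1,y}h|=|h|$ yields
\[
\|\tau_h K_y - K_y\|_{L^1} = \|\tau_{R_{e_1,y}h} K_{e_1} - K_{e_1}\|_{L^1}\leq \omega(|h|),\qquad \omega(\delta)\doteq \sup_{|h'|\leq \delta}\|\tau_{h'}K_{e_1}-K_{e_1}\|_{L^1}.
\]
Since $K_{e_1}\in L^1(\R^N)$, density of $C_c(\R^N)$ in $L^1$ gives $\omega(\delta)\to 0$ as $\delta\to 0$, and the key point is that this modulus is independent of $y$.

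Dividing $F_y$ by the $y$-independent normalizer shows that the map $x\mapsto\fint_{T_\p^{\epsilon,\infty}(y)}u(x+z)\dmN$ is uniformly continuous on $\R^N$ with a modulus uniform in $|y|=1$. The pointwise inequalities $|\sup_y g(x,y)-\sup_y g(\bar x,y)|\leq \sup_y|g(x,y)-g(\bar x,y)|$ and the analogous one for $\inf$ then transfer this property to the supremum and infimum. Finally, $\A_\epsilon u$ inherits uniform continuity as the half-sum of these two. The only non-obvious step is the uniformity of the modulus in the direction $y$: without it, one obtains only pointwise-in-$y$ continuity, which is insufficient after taking sup/inf. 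This is precisely where rotational invariance of Lebesgue measure, hence of the $L^1$ norm, is essential, reducing every $\|\tau_h K_y-K_y\|_{L^1}$ to a single quantity depending only on $|h|$.
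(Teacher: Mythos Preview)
Your proof is correct and takes a genuinely different route from the paper's. The paper argues by direct estimation: it writes the difference of the two integrals as an integral over the translated cones $x+T_\p^{\epsilon,\infty}(y)$ and $\bar x+T_\p^{\epsilon,\infty}(y)$, splits this into the intersection of the cones (where the difference of the weights $|x-z|^{-N-2s}-|\bar x-z|^{-N-2s}$ is controlled) and the two symmetric differences (where each weight is bounded by $\epsilon^{-N-2s}$), and within each piece further separates a far region $\{|x-z|>r\}$ from a near one. This yields an explicit bound of the form $C_1 r^{-2s}+C_2(r)\,|x-\bar x|$, and uniform continuity follows by first taking $r$ large and then $|x-\bar x|$ small.

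Your argument replaces all of this by the single soft fact that translation is continuous on $L^1$, together with the observation that rotational invariance collapses the family $\{K_y\}_{|y|=1}$ to a single kernel $K_{e_1}$ up to rotation, so that $\|\tau_h K_y-K_y\|_{L^1}$ depends only on $|h|$. This is shorter and more conceptual; what the paper's hands-on decomposition buys is an explicit modulus of continuity, but since only the qualitative uniform continuity is used downstream (in Theorem~\ref{th_exists} and Lemma~\ref{lem_ue}), your approach loses nothing for the purposes of this paper.
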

\begin{proof}
Denote any of the three listed functions by $f$ and observe that, for
a fixed $x,\bar x\in\R^N$ satisfying $|x-\bar x| <1$ there holds:
\begin{equation}\label{dieci}
|f(x) - f(\bar x)| \leq \sup_{|y|=1}
\Big|\fint_{T_\p^{\epsilon, \infty}(y)}  u(x+z)\dmN -
\fint_{T_\p^{\epsilon, \infty}(y)}  u(\bar x+z)\dmN\Big|
\end{equation}
For any $|y|=1$, we  may write:
\begin{equation*}
\begin{split}
& \Big|\fint_{T_\p^{\epsilon, \infty}(y)}  u(x+z)\dmN - \fint_{T_\p^{\epsilon, \infty}(y)}  u(\bar x+z)\dmN\Big|
\\ &\qquad =\frac{C(N,s)}{\mu_s^N(T_\p^{\epsilon, \infty})}\cdot
\Big|\int_{x+T_\p^{\epsilon, \infty}(y)} \frac{u(z)}{|x-z|^{N+2s}}\;\mathrm{d}z  -
\int_{\bar x+T_\p^{\epsilon, \infty}(y)}  \frac{u(z)}{|\bar
  x-z|^{N+2s}}\;\mathrm{d}z \Big|\\ & \qquad \leq  \frac{C(N,s) \|u\|_{L^\infty}}{\mu_s^N(T_\p^{\epsilon, \infty})}\cdot 
\Big( \int_{(x+T_\p^{\epsilon, \infty}(y))\cap (\bar x+T_\p^{\epsilon,
      \infty}(y)) } \Big|\frac{1}{|x-z|^{N+2s}} - \frac{1}{|\bar x-z|^{N+2s}} \Big|\;\mbox{d}z
\\ & \qquad\quad  + \int_{(x+T_\p^{\epsilon, \infty}(y))\setminus (\bar x+T_\p^{\epsilon,
      \infty}(y)) } \frac{1}{|x-z|^{N+2s}} \;\mbox{d}z
+ \int_{(\bar x+T_\p^{\epsilon, \infty}(y))\setminus (x+T_\p^{\epsilon,
      \infty}(y)) } \frac{1}{|\bar x-z|^{N+2s}} \;\mbox{d}z\Big).
\end{split}
\end{equation*}
We now estimate the three last integral terms above. Since $|\bar
x-z|>\frac{1}{2}|x-z|$ whenever $|x-z|>2$, it follows that for any
$r\geq 2$ there holds:
\begin{equation}\label{undici}
\begin{split}
&\int_{(x+T_\p^{\epsilon, \infty}(y))\cap (\bar x+T_\p^{\epsilon,
      \infty}(y)) } \Big|\frac{1}{|x-z|^{N+2s}} - \frac{1}{|\bar x-z|^{N+2s}} \Big|\;\mbox{d}z \\ & \qquad \qquad
\leq \int_{x+T_\p^{r, \infty} }\frac{1+2^{N+2s}}{|x-z|^{N+2s}} \;\mbox{d}z
+ \int_{x+T_\p^{\epsilon, r} }\frac{N+2s}{\epsilon^{N+1+2s}} |x-\bar x| \;\mbox{d}z
\\ & \qquad \qquad \leq \frac{1+2^{N+2s}}{2s}|A_\p|\cdot r^{-2s} + \frac{(N+2s)
  r^N}{N\epsilon^{N+1+2s}}|A_\p|\cdot |x-\bar x|,
\end{split}
\end{equation}
where we also used that: $\big||x-z|^{-N-2s} - |\bar
x-z|^{-N-2s} \big|\leq \frac{N+2s}{\epsilon^{N+1+2s}} |x-\bar x|$ for
$z\in (x+T_\p^{\epsilon, \infty}(y))\cap (\bar x+T_\p^{\epsilon, \infty}(y))$.
On the other hand, we have:
\begin{equation}\label{dodici}
\begin{split}
&\int_{(x+T_\p^{\epsilon, \infty}(y))\setminus (\bar x+T_\p^{\epsilon,
      \infty}(y)) } \frac{1}{|x-z|^{N+2s}} \;\mbox{d}z
\\ & \qquad \qquad \leq \frac{|A_\p|}{2s}r^{-2s} + \big|T_\p^{\epsilon, r}(y)\setminus
((\bar x - x)+T_\p^{\epsilon, \infty}(y))\big|\cdot\frac{1}{\epsilon^{N+2s}}
\end{split}
\end{equation}
Further, it easily follows that:
$$\sup_{|y|=1} \big|T_\p^{\epsilon, r}(y)\setminus ((\bar x - x)+T_\p^{\epsilon, \infty}(y))\big|
\leq \sup_{|z|<|x-\bar x|} \big|T_\p^{\epsilon, r}\setminus
(z+T_\p^{\epsilon, \infty})\big| \leq \big|(\partial T_\p^{\epsilon,
  r}) + B_{|x-\bar x|}\big|.$$

In conclusion (\ref{dieci}) becomes, in view of (\ref{undici}) and (\ref{dodici}):
\begin{equation*}
\begin{split}
\big|f(x) - f(\bar x)\big|  \leq \frac{C(N,s) \|u\|_{L^\infty}}{\mu_s^N(T_\p^{\epsilon, \infty})}\cdot 
\Big(& \frac{3+2^{N+2s}}{2s}|A_\p|\cdot r^{-2s} + \frac{(N+2s)
  r^N}{N\epsilon^{N+1+2s}}|A_\p|\cdot |x-\bar x| \\ & +  \big|(\partial T_\p^{\epsilon,
  r}) + B_{|x-\bar x|}\big|\cdot\frac{2}{\epsilon^{N+2s}} \Big).
\end{split}
\end{equation*}
It is clear that by taking $r$ large and then $|x-\bar x|$
appropriately small, the right hand side above can be bounded by any
$\delta>0$. This proves the claimed uniform continuity.
\end{proof}

\medskip

\noindent {\bf Proof of Theorem \ref{th_exists}}

Define $v_0\equiv \inf F$ and set $v_n\doteq (S_\epsilon)^n
v_0$, where the operator $S_\epsilon$ is as in (\ref{Se}).
Each function $v_n:\R^N\to \R$ is continuous in $\D$ by Lemma
\ref{conti} and hence Borel in $\R^N$. The sequence  
$\{v_n\}_{n=1}^\infty$ is uniformly bounded: $\|v_n\|_{L^\infty }\leq
\|F\|_{L^\infty}$ and nondecreasing because $v_0\leq v_1$, and
$S_\epsilon$ is order-preserving. Thus, $\{v_n\}_{n=1}^\infty$ has a
pointwise limit $v:\R^N\to \R$ which is bounded Borel and obeys the
same bound: $\|v\|_{L^\infty }\leq \|F\|_{L^\infty}$.

We now show that one can take $u_\epsilon^F=v$. Indeed, for every $x\in\D$ there holds:
\begin{equation*}
\begin{split}
|v_{n+1}(x) - S_\epsilon v(x) | & = |S_\epsilon v_n(x) - S_\epsilon
v(x)|\leq \sup_{|y|=1} \fint_{T_\p^{\epsilon, \infty}(y)}|(v_n-v)(x+z)|\dmN
\\ & \leq \frac{1}{\mu_s^N(T_\p^{\epsilon, \infty})}\int_{\R^N\setminus B_\epsilon}|(v_n-v)(x+z)|\dmN
~~ \to 0 \quad\mbox{ as } n\to\infty,
\end{split}
\end{equation*}
by the monotone convergence theorem. We thus obtain: $v=S_\epsilon v$
on $\D$, as claimed.

To prove uniqueness, assume that $v, \bar v$ are two bounded, Borel
solutions of \ref{dppe}. Clearly $v=\bar v$ on $\R^N\setminus \D$
and denote $M\doteq \sup_\D |v-\bar v|$.  For every $x\in \D$ we
observe that:
$$|(v-\bar v)(x)| = |\A_\epsilon v(x) - \A_\epsilon\bar v(x)|\leq
\sup_{|y|=1}\fint_{x+T_\p^{\epsilon, \infty}(y)} |(v - \bar v)(z)|\dmN
\leq M\cdot \frac{\mu_s^N(T_\p^{\epsilon, \mathrm{diam}\D})}{\mu_s^N(T_\p^{\epsilon, \infty})}.$$
Hence: $M\leq M \cdot \frac{\mu_s^N(T_\p^{\epsilon,
    \mathrm{diam}\D})}{\mu_s^N(T_\p^{\epsilon, \infty})}$, so there
must be $M=0$ and thus $v=\bar v$ in $\D$.

Finally, the claimed monotonicity of the solution operator to
\ref{dppe} follows from the monotonicity of $S_\epsilon$. The proof is done.
\endproof

\section{Convergence to viscosity solutions of $\Delta_\p^s u = 0$.}\label{sec_visc}

In this section we will identify the limits of solutions to the
non-local dynamic programming principle \ref{dppe} in the vanishing removed
singularity radius $\epsilon\to 0$, as viscosity solutions to the
homogeneous Dirichlet problem for $\Delta_\p^s$.

\begin{theorem}\label{th_viscosity}
Let $F:\R^N\to\R$ be uniformly continuous and bounded, and let $\D$ be
an open, bounded subset of $\R^N$. Assume that $\{u_\epsilon\}_{\epsilon\in J}$ is a sequence of solutions 
to \ref{dppe} which converges as $\epsilon\to 0$, $\epsilon \in J$
uniformly,  to some continuous limit function $u\in C(\R^N)$. Then for every
$x\in \D$, $r>0$ and every $\phi\in C^2(\R^N)$ such that $\phi(x) = u(x)$ and
$\nabla \phi(x)\neq 0$, we have:

\begin{enumerate}[leftmargin=7mm]
\item[(i)] if $\phi > u$ on $\bar B_r(x)\setminus \{x\}$, then
  $\mathcal{L}_{s,\p}[\tilde \phi](x)\geq 0$, 
\item[(ii)] if $\phi < u$ on $\bar B_r(x)\setminus \{x\}$, then
  $\mathcal{L}_{s,\p}[\tilde \phi](x)\leq 0$, 
\end{enumerate}
where we denoted: $\tilde\phi = \One_{\bar B_r(x)}\cdot\phi +\One_{\R^N\setminus \bar B_r(x)}\cdot u$.
\end{theorem}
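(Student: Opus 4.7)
I would treat case (i); case (ii) is analogous, with $\phi$ touching $u$ from below and $\tilde\phi_\epsilon$ built as a lower barrier. Since $\D$ is open and $x\in\D$, I may shrink $r$ so that $\bar B_r(x)\subset\D$ (the equivalence of the condition across admissible radii is standard for nonlocal viscosity formulations); by Lemma \ref{conti}, each $u_\epsilon$ is then continuous on $\bar B_r(x)$. Let $x_\epsilon\in\bar B_r(x)$ realise $\max_{\bar B_r(x)}(u_\epsilon-\phi)$, and set $\eta_\epsilon\doteq (u_\epsilon-\phi)(x_\epsilon)$. The uniform convergence $u_\epsilon\to u$ together with the strict ordering $\phi>u$ on $\bar B_r(x)\setminus\{x\}$ and $\phi(x)=u(x)$ gives $x_\epsilon\to x$ and $\eta_\epsilon\to 0$; for $\epsilon$ small, $x_\epsilon\in\D$ and $\nabla\phi(x_\epsilon)\neq 0$ by continuity.

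\textbf{Global test function and DPP.} Setting $\rho_\epsilon\doteq\|u_\epsilon-u\|_{L^\infty(\R^N)}\to 0$, I define the global upper barrier
\[
\tilde\phi_\epsilon \doteq \One_{\bar B_r(x)}\,(\phi+\eta_\epsilon) + \One_{\R^N\setminus\bar B_r(x)}\,(u+\rho_\epsilon),
\]
so that $u_\epsilon\leq\tilde\phi_\epsilon$ on all of $\R^N$, with equality at $x_\epsilon$. The DPP at $x_\epsilon\in\D$ combined with the obvious monotonicity of $\A_\epsilon$ yields
\[
\tilde\phi_\epsilon(x_\epsilon) = u_\epsilon(x_\epsilon) = \A_\epsilon u_\epsilon(x_\epsilon) \leq \A_\epsilon\tilde\phi_\epsilon(x_\epsilon),
\]
that is, $\A_\epsilon\tilde\phi_\epsilon(x_\epsilon)-\tilde\phi_\epsilon(x_\epsilon)\geq 0$. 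On $\bar B_{r_{x_\epsilon}}(x_\epsilon)$ with $r_{x_\epsilon}<r-|x-x_\epsilon|$, the function $\tilde\phi_\epsilon$ coincides with $\phi+\eta_\epsilon\in C^2$ whose gradient $\nabla\phi(x_\epsilon)$ is nonzero; on $\R^N\setminus\bar B_r(x)$ it equals $u+\rho_\epsilon$, and $u$ is uniformly continuous on $\R^N$ (it coincides with the uniformly continuous $F$ outside $\D$ and is continuous on the compact $\bar\D$). Applying Theorem \ref{thm4.7} to $\tilde\phi_\epsilon$ at $x_\epsilon$, dividing by $\frac{s}{C(N,s)|A_\p|}\epsilon^{2s}$, and letting $\epsilon\to 0$, gives $\mathcal{L}_{s,\p}[\tilde\phi_\epsilon](x_\epsilon)\geq o(1)$; dominated convergence, using $\tilde\phi_\epsilon\to\tilde\phi$ pointwise a.e.\ combined with the $C^2$ cancellation of the integrand near $0$ and the $|z|^{-N-2s}$ decay at infinity, then yields $\mathcal{L}_{s,\p}[\tilde\phi](x)\geq 0$.

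\textbf{Main obstacle.} The delicate step is applying Theorem \ref{thm4.7} to $\tilde\phi_\epsilon$. By case (i), $\phi-u>0$ on the compact sphere $\partial B_r(x)$, hence $\tilde\phi_\epsilon$ has a jump across it that is bounded below by a positive constant. Consequently, its modulus $\omega_{\tilde\phi_\epsilon}$ on $\R^N\setminus\bar B_{r_{x_\epsilon}}(x_\epsilon)$ does not vanish at $0$, and the error term $\omega_{\tilde\phi_\epsilon}(m_\epsilon)$ featuring in (\ref{sec3_m}) is not automatically $o(1)$ off the shelf. I would resolve this by revisiting the proof of Lemma \ref{prop3} and noting that the modulus enters only through bounds on integrals of the shape $\int_{T_\p^{r_x,\infty}(\cdot)}|\tilde\phi_\epsilon(x+z)-\tilde\phi_\epsilon(x+Rz)|\dmN$ for rotations with $|Id_N-R|\doteq m\to 0$. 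Splitting each such integral according to whether the pair $(x_\epsilon+z, x_\epsilon+Rz)$ lies on the same side or opposite sides of $\partial B_r(x)$: the same-side contribution is controlled by $(\omega_u(m)+\mathrm{Lip}(\phi;B_r(x))\cdot m)\cdot\int(1+|z|)\dmN$ which is $o(1)$; the cross-side contribution is supported on an $O(m)$-thick tubular neighbourhood of $\partial B_r(x)-x_\epsilon$, whose $\mu_s^N$-measure on any bounded annulus is $O(m)$, hence it contributes $O(m\|\tilde\phi_\epsilon\|_{L^\infty})$. Both tend to zero with $m$ (and thus with $\epsilon$, since $m_\epsilon\to 0$), restoring the $o(\epsilon^{2s})$ remainder in the asymptotic expansion and completing the proof.
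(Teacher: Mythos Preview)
Your argument is correct and follows essentially the same route as the paper's proof: locate a sequence $x_\epsilon\to x$ extremizing $u_\epsilon-\phi$, feed the DPP at $x_\epsilon$ into the asymptotic expansion, and---crucially---acknowledge that the jump of the test function across $\partial B_r(x)$ forces one to reopen the proof of Lemma~\ref{prop3} rather than quote Theorem~\ref{thm4.7} off the shelf. The paper carries out (ii) and you carry out (i), which is immaterial.

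There are two organizational differences worth flagging. First, you package the global comparison by building $\tilde\phi_\epsilon=\one_{\bar B_r(x)}(\phi+\eta_\epsilon)+\one_{\R^N\setminus\bar B_r(x)}(u+\rho_\epsilon)$ and invoking monotonicity of $\A_\epsilon$ to get $\A_\epsilon\tilde\phi_\epsilon(x_\epsilon)\geq\tilde\phi_\epsilon(x_\epsilon)$; the paper instead works with $\tilde\phi$ itself and obtains $\A_\epsilon\tilde\phi(x_\epsilon)-\tilde\phi(x_\epsilon)\leq o(\epsilon^{2s})$ by a direct comparison (its Step~2), absorbing $\|u_\epsilon-u\|_{L^\infty}$ and $|u(x_\epsilon)-u(x)|$ into the error. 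Your route is a bit cleaner conceptually. Second, in handling the jump across $\partial B_r(x)$, you split the far integral into same-side and cross-side contributions, bounding the latter by the $\mu_s^N$-measure of an $O(m)$-tubular neighbourhood of the sphere; the paper instead splits radially into $T_\p^{\epsilon,\,r-|x_\epsilon-x|}$, $T_\p^{r+|x_\epsilon-x|,\,\infty}$, and the thin annulus in between, so that its ``bad'' shell has thickness governed by $|x_\epsilon-x|$ rather than by $m$. Both splittings work; yours trades the parameter $|x_\epsilon-x|\to 0$ for $m_\epsilon\to 0$. Finally, you pass from $\mathcal{L}_{s,\p}[\tilde\phi_\epsilon](x_\epsilon)$ to $\mathcal{L}_{s,\p}[\tilde\phi](x)$ by dominated convergence, while the paper separates this into showing $|\mathcal{L}_{s,\p}^\epsilon[\tilde\phi](x_\epsilon)-\mathcal{L}_{s,\p}[\tilde\phi](x_\epsilon)|=o(1)$ and then $\mathcal{L}_{s,\p}[\tilde\phi](x_\epsilon)\to\mathcal{L}_{s,\p}[\tilde\phi](x)$ explicitly. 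Your reduction to smaller $r$ at the outset is also fine, since $\tilde\phi_r\geq\tilde\phi_{r'}$ with equality at $x$ whenever $r'<r$, so the conclusion for the smaller radius implies it for the original one.
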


\begin{remark}\label{remi_visc}
Recall by Remark \ref{rem2.5} that $\mathcal{L}_{s,\p}$
differs from $\Delta_\p^s$ only by a multiplicative constant,
depending on $N, s, \p$.  It is also clear that $u$ as in Theorem \ref{th_viscosity} satisfies $u=F$
in $\R^N\setminus \D$. A function $u$ satisfying only
the one-sided comparison with test functions (i) (rather than both
conditions (i) and (ii)) is  called a viscosity subsolution to the non-local Dirichlet problem:
\begin{equation}\label{nD}
\Delta_\p^s u = 0 \quad \mbox{in } \D, \qquad u=F \quad \mbox{in } \R^N\setminus \D.
\end{equation}
When (i) is replaced by (ii), then $u$ is called a viscosity
supersolution. Satisfaction of both conditions is referred to $u$
being a viscosity solution of (\ref{nD}). See also \cite[Definition 4.4]{BCF2}.
\end{remark}

\medskip

\noindent {\bf Proof of Theorem \ref{th_viscosity}}

{\bf 1.} Let $\phi, r$ and $x\in\D$ be as indicated. We will show that
(ii) holds, while the property (i) can be deduced by a symmetric argument. 

For all $j\in\N$ such that $\bar B_{1/j}(x)\subset\D$, define
$\epsilon_j>0$ by requesting that:
$$\|u_\epsilon - u\|_{L^\infty}\leq \frac{1}{2} \min_{\bar
  B_r(x)\setminus B_{1/j}(x)} (u-\phi) \qquad\mbox{for all }\;
\epsilon\leq \epsilon_j,~ \epsilon\in J.$$
Without loss of generality, the sequence $\{\epsilon_j\}_{j\to\infty}$
is decreasing to $0$. Let $\{x_\epsilon\in\D\}_{\epsilon\in J}$ be a
sequence with the property that:
$$(u_\epsilon - \phi)(x_\epsilon) = \min_{\bar B_{1/j}(x)} (u_\epsilon-\phi)
\qquad \mbox{and} \qquad x_\epsilon\in\bar B_{1/j}(x) ~~ \mbox{ for
  all  }\; \epsilon\in (\epsilon_{j+1}, \epsilon_j]\cap J.$$
Then, for all $\bar x\in \bar B_r(x)\setminus B_{1/j}(x)$ we have:
\begin{equation*}
\begin{split}
(u_\epsilon - \phi)(\bar x) & \geq (u-\phi)(\bar x) - \frac{1}{2}\min_{\bar
  B_r(x)\setminus B_{1/j}(x)} (u-\phi)\geq  \frac{1}{2}\min_{\bar
  B_r(x)\setminus B_{1/j}(x)} (u-\phi) \\ & \geq (u_\epsilon - u)(x) =
(u_\epsilon-\phi)(x) \geq (u_\epsilon-\phi)(x_\epsilon).
\end{split}
\end{equation*}
This implies:
\begin{equation}\label{tredici}
(u_\epsilon-\phi)(x_\epsilon) = \min_{\bar B_r(x)} (u_\epsilon-\phi) ~~
\mbox{ for all }\; \epsilon\in J \qquad\mbox{and} \qquad x_\epsilon\to x
~~ \mbox{ as }\; \epsilon\to 0 ,~\epsilon\in J.
\end{equation}

\smallskip

{\bf 2.} Since $u_\epsilon$ satisfies \ref{dppe}, it follows that:
\begin{equation*}
\begin{split}
\A_\epsilon\tilde\phi (x_\epsilon) - \tilde\phi(x_\epsilon) & =
\big(\A_\epsilon\tilde\phi (x_\epsilon) - \tilde\phi(x_\epsilon) \big)
- \big(\A_\epsilon u_\epsilon (x_\epsilon) - u_\epsilon (x_\epsilon)\big) 
\\ & \leq \sup_{|y|=1} \fint_{T_\p^{\epsilon, \infty}(y)}\tilde
\phi(x_\epsilon +z) - u_\epsilon(x_\epsilon+z) - \phi(x_\epsilon) + u_\epsilon(x_\epsilon)\dmN.
\end{split}
\end{equation*}
We fix $|y|=1$ and estimate the integral above. By (\ref{tredici}) it follows that $\tilde
\phi(x_\epsilon+z)-u_\epsilon(x_\epsilon+z) \leq
\phi(x_\epsilon)-u_\epsilon(x_\epsilon)$ whenever $x_\epsilon+z\in\bar
B_r(x)$, so the said integral is bounded by:
\begin{equation*}
\begin{split}
 \frac{1}{\mu_s^N(T_\p^{\epsilon, \infty})}&\int_{T_\p^{\epsilon,
    \infty}(y)\setminus \bar B_r(x-x_\epsilon)} u(x_\epsilon+z) -
u_\epsilon(x_\epsilon+z) - \phi(x_\epsilon) +
u_\epsilon(x_\epsilon)\dmN \\ & \leq \frac{\mu_s^N(T_\p^{\epsilon,
    \infty}(y)\setminus \bar B_r(x-x_\epsilon))}{\mu_s^N(T_\p^{\epsilon, \infty})}
\cdot \big(2\|u-u_\epsilon\|_{L^\infty} + |u(x_\epsilon) - \phi(x_\epsilon)|\big).
\end{split}
\end{equation*}
Hence we get:
\begin{equation}\label{quattordici}
\begin{split}
\A_\epsilon\tilde\phi (x_\epsilon) - \tilde\phi(x_\epsilon) & \leq 
\frac{\mu_s^N(T_\p^{r/2, \infty})}{\mu_s^N(T_\p^{\epsilon, \infty})}
\cdot \big(2\|u-u_\epsilon\|_{L^\infty} + |u(x_\epsilon) - u(x)|+
|\phi(x_\epsilon) -\phi(x)|\big) \\ & = o(\epsilon^{2s})\quad \mbox{ as }\; \epsilon\to 0.
\end{split}
\end{equation}

\smallskip

{\bf 3.} In this step, we show that:
\begin{equation}\label{diciasette}
\big|\mathcal{L}_{s,\p}^\epsilon[\tilde \phi](x_\epsilon) -
\mathcal{L}_{s,\p}[\tilde \phi](x_\epsilon)\big| = o(1)\quad \mbox{ as }\; \epsilon\to 0.
\end{equation}
The argument relies on verifying the proof of Lemma \ref{prop3}. With
the parallel notation $m=\big|\frac{p_x^\epsilon}{|p_x^\epsilon|} - y_\delta^\epsilon\big|$,
where $p_x^\epsilon=\nabla \phi(x_\epsilon)\neq 0$, and where the unit
vector $y_\delta^\epsilon$ is an almost
maximizer of the function $y\mapsto \int_{T_\p^{\epsilon, \infty}(y)}\tilde\phi(x_\epsilon+z)\dmN$, we get the following
  replacement of (\ref{tre.6}):
\begin{equation*}
\begin{split}
\delta & \geq \int_{T_\p^{\epsilon,r-|x_\epsilon
    -x|}(\frac{p_x^\epsilon}{|p_x^\epsilon|})}\phi(x_\epsilon +z) -
\phi\big (x_\epsilon+R_{y_\delta^\epsilon,
  \frac{p_x^\epsilon}{|p_x^\epsilon|}}z\big)\dmN \\ & \quad - 
\omega_u(m)\cdot \int_{T_\p^{r+|x_\epsilon-x|, \infty}}(1+|z|)\dmN -
2\|\tilde \phi\|_{L^\infty}\cdot \mu_s^N\big(T_\p^{r-|x_\epsilon-x|, r+|x_\epsilon-x|}\big)
\\ & \geq \Big\langle p_x^\epsilon, \big(Id_N - R_{y_\delta^\epsilon,
  \frac{p_x^\epsilon}{|p_x^\epsilon|}}\big) \int_{T_\p^{\epsilon,r-|x_\epsilon
    -x|}(\frac{p_x^\epsilon}{|p_x^\epsilon|})} z \dmN \Big\rangle -
O(1) m -  O(1) \omega_u(m) - O(1) |x_\epsilon - x| \\ & \geq
\frac{m^2|p_x^\epsilon|}{2}\int_{T_\p^{\epsilon, r/2}}\langle z,e_1\rangle \dmN - O(1),
\end{split}
\end{equation*}
where $O(1)$ depends on $N,s,\p,r$ and $\|\tilde \phi\|_{L^\infty}$,
$\|\nabla^2\phi\|_{L^\infty(\bar B_r(x))}$. Consequently:
\begin{equation}\label{ds1}
m^2\leq \frac{O(1)}{|p_x^\epsilon|\int_{T_\p^{\epsilon, r/2}}\langle
  z,e_1\rangle \dmN}\leq \frac{O(1)}{|p_x^\epsilon|}\epsilon^{2s-1}.
\end{equation}

Further, as in (\ref{sette}) we get:
$$\Big|\mathcal{L}_{s,\p}^\epsilon[\tilde\phi](x_\epsilon) -
\int_{T_\p^{\epsilon, \infty}(\frac{p_x^\epsilon}{|p_x^\epsilon|})}L_{\tilde\phi}(x_\epsilon,
z,z)\dmN\Big|\leq \delta +J_1 +J_2+J_3,$$
where:
\begin{equation*}
\begin{split}
J_1 & \leq \Big|\int_{T_\p^{\epsilon, r-|x_\epsilon-x|}(\frac{p_x^\epsilon}{|p_x^\epsilon|})}
\phi(x_\epsilon+z) - \phi \big(x_\epsilon + R_{y_\delta^\epsilon, \frac{p_x^\epsilon}{|p_x^\epsilon|}}z\big)
+\phi(x_\epsilon -z) - \phi \big(x_\epsilon - R_{y_\delta^\epsilon, \frac{p_x^\epsilon}{|p_x^\epsilon|}}z\big)
\dmN\Big| \\ & \leq \int_{T_\p^{\epsilon, r-|x_\epsilon-x|}(\frac{p_x^\epsilon}{|p_x^\epsilon|})}
\Big|\big\langle \nabla\phi (x_\epsilon+z) - \nabla\phi(x_\epsilon
-x), \big(Id_N -  R_{y_\delta^\epsilon, \frac{p_x^\epsilon}{|p_x^\epsilon|}}\big)z\big\rangle\Big|\dmN
+ O(1) m^2 \\ & \leq O(1) m,
\end{split}
\end{equation*}
\begin{equation*}
\begin{split}
J_2 & \leq \Big|\int_{T_\p^{\epsilon, r+|x_\epsilon-x|}(\frac{p_x^\epsilon}{|p_x^\epsilon|})}
u (x_\epsilon+z) - u \big(x_\epsilon + R_{y_\delta^\epsilon, \frac{p_x^\epsilon}{|p_x^\epsilon|}}z\big)
+u(x_\epsilon -z) - u \big(x_\epsilon - R_{y_\delta^\epsilon, \frac{p_x^\epsilon}{|p_x^\epsilon|}}z\big)
\dmN\Big| \\ & \leq O(1) \omega_u(m), \\
J_3 & \leq \int_{T_\p^{ r-|x_\epsilon-x|,  r+|x_\epsilon-x|}}
4\|\tilde\phi\|_{L^\infty}\dmN \leq O(1) |x_\epsilon-x|.
\end{split}
\end{equation*}
After passing $\delta\to 0$ and recalling (\ref{ds1}), we conclude:
\begin{equation*}
\Big|\mathcal{L}_{s,\p}^\epsilon[\tilde \phi](x_\epsilon)
-\int_{T_\p^{\epsilon, \infty}(\frac{p_x^\epsilon}{|p_x^\epsilon|})} L_{\tilde\phi}
(x_\epsilon, z,z)\dmN \Big| \leq O(1)\big( m + \omega_u(m) +
|x_\epsilon-x|\big) = o(1) \quad \mbox{ as }\; \epsilon\to 0.
\end{equation*}
The above implies (\ref{diciasette}), in view of: $\big|
\int_{T_\p^{0, \epsilon}(\frac{p_x^\epsilon}{|p_x^\epsilon|})} L_{\tilde\phi}
(x_\epsilon, z,z)\dmN \big| = O(1)\epsilon^{2-2s}$.

\smallskip

{\bf 4.} Recall that by Theorem \ref{thm4.7} we also directly have:
$$\Big|\mathcal{L}_{s,\p}^\epsilon[\tilde \phi](x)
- \mathcal{L}_{s,\p}[\tilde\phi](x)\Big|\leq o(1) \quad \mbox{ as }\; \epsilon\to 0,$$
because $\tilde\phi = \phi$ on $\bar B_r(x)$ has regularity $C^2$,
while $\tilde\phi = u$ on $\R^N\setminus \bar B_r(x)$ is uniformly
continuous. Together with (\ref{diciasette}), this yields:
\begin{equation}\label{quindici}
\begin{split}
\Big|\big(\A_\epsilon\tilde\phi &(x_\epsilon) - \tilde\phi(x_\epsilon)\big) 
- \big(\A_\epsilon \tilde\phi(x) - \tilde\phi (x)\big)\Big|   =
\frac{1}{\mu_s^N(T_\p^{\epsilon, \infty})} \Big|\mathcal{L}_{s,\p}^\epsilon[\tilde \phi](x_\epsilon)
- \mathcal{L}_{s,\p}^\epsilon[\tilde\phi](x)\Big| \\ &=
\frac{1}{\mu_s^N(T_\p^{\epsilon, \infty})} \Big(\Big|\mathcal{L}_{s,\p}[\tilde \phi](x_\epsilon)
- \mathcal{L}_{s,\p}[\tilde\phi](x)\Big| + o(1)\Big) \quad \mbox{ as }\; \epsilon\to 0.
\end{split}
\end{equation}
We now denote $R\doteq R_{\frac{p_x^\epsilon}{|p_x^\epsilon|},\frac{p_x}{|p_x|} }$and  estimate:
\begin{equation*}
\begin{split}
& \Big|\mathcal{L}_{s,\p}[\tilde \phi](x_\epsilon) -
\mathcal{L}_{s,\p}[\tilde\phi](x)\Big| \\ & \leq
\int_{T_\p^{0,\infty}(\frac{p_x}{|p_x|})}\Big|\big(\tilde\phi(x_\epsilon + R z)
+ \tilde\phi (x_\epsilon - R z) - 2\tilde\phi(x_\epsilon)\big) - \big(\tilde \phi(x+z)
+\tilde\phi(x-z)-2\phi(x)\big)\Big|\dmN \\ & \leq \frac{1}{2}\int_{T_\p^{0,r}}|z|^2\dmN 
\cdot \sup_{z\in B_r(x)}\Big|\big( \nabla\phi(x_\epsilon + Rz) + \nabla\phi(x_\epsilon-Rz)\big)
 - \big( \nabla\phi(x + Rz) + \nabla\phi(x-Rz)\big)\Big|\\
&\quad + \int_{T_\p^{r+|x_\epsilon - x|,\infty}}
2\omega_u(|x_\epsilon - x|) + 2\omega_u(|Id_N-R|)\cdot (1+|z|)\dmN 
\\ &\quad + \int_{T_\p^{r-|x_\epsilon - x|, r+|x_\epsilon - x|}} 6\|\tilde\phi\|_{L^\infty}\dmN
\\ &\leq O(1) \cdot\Big(|x_\epsilon - x| + \omega_u(|x_\epsilon-x|) +
\omega_u\big(\big|\frac{p_x^\epsilon}{|p_x^\epsilon|} -
\frac{p_x}{|p_x|}\big|\big)\Big) \leq o(1) \quad \mbox{ as }\; \epsilon\to 0.
\end{split}
\end{equation*}

At this point, combining (\ref{quindici}) with (\ref{quattordici}) results in:
$$\mathcal{A}_\epsilon\tilde \phi(x) - \tilde\phi(x) \leq o(\epsilon^{2s}) \quad \mbox{ as }\; \epsilon\to 0.$$
On the other hand, Theorem \ref{thm4.7} implies that:
$$\mathcal{A}_\epsilon\tilde \phi(x) - \tilde\phi(x)  =
\frac{s}{C(N,s) |A_\p|} \epsilon^{2s}\cdot
\mathcal{L}_{s,\p}[\tilde\phi](x) + o(\epsilon^{2s}).$$
The above two asymptotic statements directly yield
$\mathcal{L}_{s,\p}[\tilde\phi](x)\leq 0$, as claimed.
\endproof

\section{The non-local Tug-of-War game with noise}\label{sec_game}

In this section, we develop the basic probability setting related to the equation \ref{dppe}.

\medskip

{\bf 1.} Consider the probability space $(T_\p^{1,\infty}, \mathcal{B},
\frac{1}{\mu_s^N(T_\p^{1,\infty})}\mu_s^N)$ equipped with the standard Borel
$\sigma$-algebra and the normalised $\mu_s^N$ measure, and define
$(\Omega_1, \mathcal{F}_1, \mathbb{P}_1)$ as the product space with 
the counting measure on the discrete set $\{1,2\}$. In particular,
for every $B\in\mathcal{B}$, we have:
$$\mathbb{P}_1\big(B\times \{1,2\}\big) = \frac{2s}{|A_\p|}\int_{B}\frac{1}{|z|^{N+2s}}\;\mbox{d}z.$$
Further, the countable product of $(\Omega_1, \mathcal{F}_1,
\mathbb{P}_1)$ is denoted by $(\Omega, \mathcal{F}, \mathbb{P})$, where:
\begin{equation*}
\begin{split}
\Omega= (\Omega_1)^{\mathbb{N}} = \big\{\omega=\{(z_i,
s_i)\}_{i=1}^\infty; ~ z_i\in T_\p^{1,\infty},~ s_i\in \{1,2\} ~~\mbox{ for all } i\in\mathbb{N}\big\}.
\end{split}
\end{equation*}
For each $n\in\mathbb{N}$, the probability space $(\Omega_n, \mathcal{F}_n, \mathbb{P}_n)$ is the 
product of $n$ copies of $(\Omega_1, \mathcal{F}_1,
\mathbb{P}_1)$ and the $\sigma$-algebra $\mathcal{F}_n$ is 
identified with the sub-$\sigma$-algebra of $\mathcal{F}$, 
consisting of sets $A\times \prod_{i=n+1}^\infty\Omega_1$
for all $A\in\mathcal{F}_n$. Then $\{\mathcal{F}_n\}_{n=0}^\infty$ where 
$\mathcal{F}_0= \{\emptyset, \Omega\}$, is a filtration of $\mathcal{F}$. 

\medskip

{\bf 2.} Given are two families of functions $\sigma_I=\{\sigma_I^n\}_{n=0}^\infty$ and
$\sigma_{II}=\{\sigma_{II}^n\}_{n=0}^\infty$, defined on the
corresponding spaces of ``finite histories'' $H_n=\R^N\times (\R^N\times\Omega_1)^n$:
$$\sigma_I^n, \sigma_{II}^n:H_n\to \{y\in\R^N;~ |y|=1\},$$
assumed to be measurable with respect to the (target) Borel
$\sigma$-algebra and the (domain) product $\sigma$-algebra on $H_n$.
For every $x\in\R^N$ and $\epsilon\in (0,1)$ we recursively define:
$$\big\{X_n^{\epsilon, x, \sigma_I, \sigma_{II}}:\Omega\to\R^N\big\}_{n=0}^\infty.$$
For simplicity of notation, we often suppress some of the superscripts $\epsilon, x, \sigma_I, \sigma_{II}$
and write $X_n$ 
instead of $ X_n^{\epsilon, x, \sigma_I, \sigma_{II}}$, if no ambiguity arises.
Recall that $R_{\tilde y, y}\in SO(N)$ is as in Definition \ref{def_cone}. We put: 
\begin{equation}\label{processMp}
\begin{split}
& \, X_0\equiv x, \\ & \, X_n\big((z_1,s_1), \ldots,
(z_n,s_n)\big) \doteq X_{n-1} + \left\{\begin{array}{ll} 
\epsilon R_{\sigma_I^{n-1}, e_1}z_n & \mbox{for } s_n=1 \vspace{1mm} \\ 
\epsilon R_{\sigma_{II}^{n-1}, e_1}z_n & \mbox{for } s_n=2. \end{array} \right.\\
\end{split}
\end{equation}
In this ``game'', each of the two players chooses (deterministically) a
direction $y$, according to their ``strategy'' $\sigma_I$ and
$\sigma_{II}$. These choices are activated  by the value of the equally probable outcomes:
$s_n=1$ activates $\sigma_I$ and $s_n=2$ activates $\sigma_{II}$. 
The position $X_{n-1}$ is then advanced by a shift 
$\epsilon R z\in T_\p^{\epsilon, \infty}(y)$, randomly in $z$ according to the
normalised measure $\mu_s^N$ on $T_\p^{1,\infty}$. 

\medskip

{\bf 3.}  Given an open, bounded domain $\D\subset\R^N$, 
define further the $\mathcal{F}$-measurable random variable:
$\tau^{\epsilon, x, \sigma_I, \sigma_{II}}:\Omega\to \mathbb{N}\cup\{+\infty\}$ by:
$$\tau(\omega) \doteq \min\big\{n\geq 1;~ X_n\not\in \D\big\}.$$
We observe that $\tau$ is finite $\mathbb{P}$-a.e., making it a
stopping time. Indeed, since $\PP_1(T_\p^{\mathrm{diam}\D,
  \infty}\times \{1,2\})>0$ it follows that $\PP\big(\omega;~ \exists
i ~~ z_i\in T_\p^{\mathrm{diam}\D, \infty}\big)=1$, and on this event $\tau<\infty$.

Let $F:\R^N\to\mathbb{R}$ be a given bounded, uniformly continuous function.  In our
``game'', the first ``player'' collects from his 
opponent the payoff, given by the data $F$ at the stopping position. The incentive
of the collecting ``player'' to maximize the outcome and of the
disbursing ``player'' to minimize it, leads to the definition of the two game values in:
\begin{equation}\label{15}
\begin{split}
& u_I^\epsilon(x) =
\sup_{\sigma_I}\inf_{\sigma_{II}}\mathbb{E}\Big[F\circ \big(X^{\epsilon, x,
\sigma_I, \sigma_{II}}\big)_{\tau^{\epsilon, x, \sigma_I,  \sigma_{II}}}\Big], \\
& u_{II}^\epsilon(x) =
\inf_{\sigma_{II}}\sup_{\sigma_{I}}\mathbb{E}\Big[F\circ \big(X^{\epsilon, x,
\sigma_I, \sigma_{II}}\big)_{\tau^{\epsilon, x, \sigma_I, \sigma_{II}}}\Big]. 
\end{split}
\end{equation}
It is  clear that $u_{I}^\epsilon$, $u_{II}^\epsilon$ depend only on the values of
$F$ on $\R^N\setminus\D$. We now show that both game values coincide with the unique solution to the
dynamic programming principle \ref{dppe} modeled on the non-local
asymptotic expansion in Theorem \ref{thm4.7}.

\medskip

\begin{lemma}\label{lem_ue}
For each $\epsilon\ll 1$ we have $u_I^\epsilon =
u_{II}^\epsilon=u_\epsilon$, where $u_\epsilon$ is the unique bounded, Borel solution to \ref{dppe}.
\end{lemma}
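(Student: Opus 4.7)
My plan is to establish the three-way sandwich $u_\epsilon\le u_I^\epsilon\le u_{II}^\epsilon\le u_\epsilon$. The middle inequality is immediate from the definitions (minimax always dominates maximin), so the task reduces to proving $u_{II}^\epsilon\le u_\epsilon$ and $u_\epsilon\le u_I^\epsilon$. By symmetry I focus on the first; the second follows by swapping the roles of the players and reversing all inequalities.

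Fix $x\in\D$ and $\eta>0$. I would construct an $\eta$-almost-optimal infimizing strategy $\sigma_{II}^*$ for Player II as follows: at step $n$, given history $h\in H_n$ with current position $X_{n-1}\in\D$, select a unit vector $\sigma_{II}^{*,n-1}(h)$ such that
$$\fint_{T_\p^{\epsilon,\infty}(\sigma_{II}^{*,n-1})} u_\epsilon(X_{n-1}+z)\dmN \le \inf_{|y|=1}\fint_{T_\p^{\epsilon,\infty}(y)} u_\epsilon(X_{n-1}+z)\dmN + \eta 2^{-(n+1)}.$$
For histories with $X_{n-1}\notin\D$ the strategy is irrelevant (the process is already stopped) and can be fixed arbitrarily. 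The key analytic input here is Lemma \ref{conti}, which guarantees that the map $(x,y)\mapsto \fint_{T_\p^{\epsilon,\infty}(y)} u_\epsilon(x+z)\dmN$ is continuous jointly in both variables; combined with compactness of the unit sphere, this permits a Borel-measurable selection of almost-minimizers depending measurably on $h$ (hence on $X_{n-1}$).

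The central calculation then uses the scale invariance of the normalized measure $\mu_s^N/\mu_s^N(T_\p^{1,\infty})$ under $z\mapsto\epsilon z$ to rewrite the one-step conditional expectation generated by (\ref{processMp}): for any Player I strategy $\sigma_I$ and on $\{n\le\tau\}$,
\begin{equation*}
\mathbb{E}[u_\epsilon(X_n)\mid \mathcal{F}_{n-1}] = \tfrac12\fint_{T_\p^{\epsilon,\infty}(\sigma_I^{n-1})} u_\epsilon(X_{n-1}+z)\dmN + \tfrac12\fint_{T_\p^{\epsilon,\infty}(\sigma_{II}^{*,n-1})} u_\epsilon(X_{n-1}+z)\dmN.
\end{equation*}
Bounding the first term by $\sup_{|y|=1}$ and the second by $\inf_{|y|=1}+\eta 2^{-n}$, and invoking \ref{dppe} at $X_{n-1}\in\D$, I conclude that the stopped process
$$Z_n\doteq u_\epsilon(X_{n\wedge\tau}) + \eta 2^{-n}$$
is a bounded supermartingale with $Z_0 = u_\epsilon(x)+\eta$. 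Since $\tau<\infty$ $\mathbb{P}$-a.s.\ (already remarked in the text, using that $\mathbb{P}_1(T_\p^{\mathrm{diam}\,\D,\infty}\times\{1,2\})>0$) and $Z_n$ is uniformly bounded by $\|F\|_{L^\infty}+\eta$, the optional stopping theorem (with $n\to\infty$ in $\mathbb{E}[Z_{n\wedge\tau}]\le Z_0$ and bounded convergence) gives $\mathbb{E}[F(X_\tau)]\le\mathbb{E}[Z_\tau]\le u_\epsilon(x)+\eta$. Taking $\sup_{\sigma_I}$ and then $\eta\to 0$ yields $u_{II}^\epsilon(x)\le u_\epsilon(x)$. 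The dual inequality $u_\epsilon\le u_I^\epsilon$ follows identically via an $\eta$-almost-supremizing $\sigma_I^*$ and the submartingale $u_\epsilon(X_{n\wedge\tau})-\eta 2^{-n}$.

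The main obstacle is the measurable-selection step for the almost-optimal strategies: one must verify that Borel selections $\sigma_{II}^{*,n}\colon H_n\to S^{N-1}$ of $\eta 2^{-(n+1)}$-almost-minimizers genuinely exist as functions of the full history. This reduces, via Lemma \ref{conti}, to selecting a Borel function from a continuous parametrized family of continuous functions on a compact set, which can be handled by a standard measurable-selection theorem (e.g.\ Kuratowski--Ryll-Nardzewski) or by an elementary grid-approximation argument exploiting uniform continuity on compacts. Once this is established, the remainder—the supermartingale identity and the optional stopping—is routine thanks to the boundedness of $u_\epsilon$ (Theorem \ref{th_exists}) and the almost-sure finiteness of $\tau$.
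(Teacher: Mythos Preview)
Your proposal is correct and follows essentially the same route as the paper: the same sandwich $u_\epsilon\le u_I^\epsilon\le u_{II}^\epsilon\le u_\epsilon$, the same $\eta 2^{-(n+1)}$-almost-optimal Markov strategy for Player~II, the same supermartingale $u_\epsilon(X_{n\wedge\tau})+\eta 2^{-n}$, and the same appeal to optional stopping. The only difference is in the measurable-selection step: the paper gives an explicit elementary construction (a locally finite covering of $\R^N$ by balls on which the infimum oscillates by at most $\eta 2^{-(n+2)}$, using only the uniform continuity in $x$ from Lemma~\ref{conti}, and then a piecewise-constant choice of direction), whereas you invoke a general selection theorem or a grid argument---both approaches work, and the paper's is slightly more self-contained.
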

\begin{proof}
{\bf 1.} We will show that $u_{II}^\epsilon\leq u_\epsilon$, while the
inequality $u_\epsilon\leq u_{I}^\epsilon$ can be proved by a
symmetric argument and  $u_I^\epsilon \leq u_{II}^\epsilon$ is always valid.
Fix $x\in\R^N$ and $\epsilon, \delta>0$. We choose a strategy
$\sigma_{II,0}=\{\sigma_{II,0}^n(X_n)\}_{n=0}^\infty$ satisfying:
\begin{equation}\label{markov}
\inf_{|y|=1} \fint_{T_\p^{\epsilon, \infty}(y)} u_\epsilon(x+z)\dmN + \frac{\delta}{2^{n+1}}
\geq \fint_{T_\p^{\epsilon, \infty}(\sigma_{II,0}^n)} u_\epsilon(x+z)\dmN.
\end{equation}
The fact that such Borel-regular strategy exists follows from Lemma \ref{conti}.
Indeed, let $\{B(x_i, \xi)\}_{i=1}^\infty$ be a locally finite
covering of $\R^N$, where:
$$\Big|\inf_{|y|=1}\fint_{T_\p^{\epsilon, \infty}(y)} u_\epsilon(x+z)\dmN
- \inf_{|y|=1}\fint_{T_\p^{\epsilon, \infty}(y)}  u_\epsilon(\bar
x+z)\dmN\Big|\leq \frac{\delta}{2^{n+2}} \qquad \mbox{for all }\; |x-\bar x|<\xi. $$
For each $i\in\N$ there exists then $|y_i|=1$ with the property:
$\big|\inf_{|y|=1}\fint_{T_\p^{\epsilon, \infty}(y)} u_\epsilon(x_i+z)\dmN - \fint_{T_\p^{\epsilon, \infty}(y_i)}
u_\epsilon(x_i+z)\dmN\big|<\frac{\delta}{2^{n+2}}$. We hence define:
$$\sigma_{II,0}^n(x) = y_i\qquad\mbox{for all }\; x\in B(x_i,
\xi)\setminus \bigcup_{j=1}^{i-1}B(x_j,\xi).$$

\smallskip

{\bf 2.} Fix $x\in\D$ and a strategy $\sigma_I$. Consider the sequence of random variables:
$$M_n\doteq u_\epsilon\circ X_{n\wedge \tau}^{\epsilon, x, \sigma_I,\sigma_{II,0}} + \frac{\delta}{2^n}.$$
We now check that $\{M_n\}_{n=1}^\infty$ is a supermartingale with
respect to the filtration $\{\mathcal{F}_n\}_{n=0}^\infty$. On the
event $n>\tau$ there clearly holds $\mathbb{E}(M_n|\mathcal{F}_{n-1})
=M_{n-1}$. On the other hand, on the event $n\leq\tau$ we have
$X_{n-1}\in\D$, so the property \ref{dppe} and (\ref{markov}) imply that:
\begin{equation*}
\begin{split}
& \mathbb{E}(M_n\mid \mathcal{F}_{n-1}) - M_{n-1} \\ &  =\frac{1}{2}\Big(
\fint_{T_\p^{\epsilon, \infty}(\sigma_{I}^n)} u_\epsilon(X_{n-1}+z)\dmN +
\fint_{T_\p^{\epsilon, \infty}(\sigma_{II,0}^n)} u_\epsilon(X_{n-1}+z)\dmN \Big)
-\frac{\delta}{2^{n+1}} -u_\epsilon(X_{n-1}) \\ & \leq
\mathcal{A}_\epsilon u_\epsilon(X_{n-1}) -u_\epsilon(X_{n-1}) =0.
\end{split}
\end{equation*}
Using Doob's optional stopping theorem, we arrive at:
$$u_\epsilon(x)+\delta =\mathbb{E}[M_0]\geq \mathbb{E}[M_\tau] = \frac{\delta}{2^{\tau}} 
+\mathbb{E}\big[F\circ X_\tau^{\epsilon, x, \sigma_I,
  \sigma_{II,0}}\big]\geq \frac{\delta}{2^{\tau}} 
+ \inf_{\sigma_{II}}\mathbb{E}\big[F\circ X_\tau^{\epsilon, x, \sigma_I, \sigma_{II}}\big].$$
This yields: $u_\epsilon(x)\geq\delta\geq u^\epsilon_{II}(x)$,
concluding the proof in view of $\delta$ being arbitrary.
\end{proof}

\section{Auxiliary estimates for the barrier function}\label{sec_ft}

The purpose of this section is to show the first boundary regularity
estimate for the game process $\{X_n\}_{n=0}^\infty$, towards establishing
our main asymptotic equicontinuity result. 

\begin{theorem}\label{cone_thengood}
Let $\D\subset\R^N$ be an open, bounded domain satisfying the external cone condition.
Namely, assume that there exists a finite cone $C$ such that for
each $x\in\partial \D$ there holds: $x+ S_xC\subset\R^N\setminus \D$, for
some rotation $S_x\in SO(N)$. Then:
\begin{equation}\label{gr1}
\begin{split}
\forall \delta>0\quad\exists \hat\delta<\delta,~
\hat\epsilon>0\quad & \forall \epsilon<\hat\epsilon,~ x\in\partial\D,~ x_0\in B_{\hat\delta}(x)\cap\D \quad 
\\ & \exists \sigma_{I,0}\quad\forall\sigma_{II}\quad
\PP\big(\exists n<\tau \quad  X_n^{\epsilon, x_0,\sigma_{I,0}, \sigma_{II}}\not\in B_\delta(x)\big)\leq\bar\theta,
\end{split}
\end{equation}
with a constant $\bar\theta<1$ depending only on $N,\p,s$ and the cone $C$.
\end{theorem}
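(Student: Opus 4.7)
The plan is to prove the theorem by a classical barrier argument adapted to the non-local Tug-of-War. After translating $x$ to the origin and rotating $S_xC$ into a reference position, we import from \cite{BCF2} a barrier function $w:\R^N\to[0,1]$, built from the exterior cone, with the following properties: (a) $w$ is continuous, $w\equiv 0$ on $S_xC$ (in particular $w(0)=0$), with $\lim_{y\to 0}w(y)=0$; (b) $w\geq \theta_0$ on $\partial B_\delta(0)$ for some $\theta_0=\theta_0(N,\p,s,C)>0$; (c) $\mathcal{L}_{s,\p}[w](y)\leq 0$ pointwise on $(\D\cap B_\delta(0))\setminus\{0\}$, with locally uniform bounds on $\nabla w$ and $\nabla^2 w$ away from $0$.

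The first step is to combine (c) with the asymptotic expansion Theorem \ref{thm4.7} and the quantitative bounds of Remark \ref{rem4} to upgrade this continuous supersolution inequality to the discrete form
\begin{equation*}
\A_\epsilon w(y) \leq w(y) \qquad \mbox{for every } y\in \D\cap B_\delta(0),~\epsilon<\hat\epsilon,
\end{equation*}
with $\hat\epsilon$ depending only on $N,\p,s,C$ and $\delta$. The second step is to have Player I use a strategy $\sigma_{I,0}^n$ that $(\eta 2^{-n-1})$-almost-minimizes the map $|y|=1\mapsto\fint_{T_\p^{\epsilon,\infty}(y)}w(X_n+z)\dmN$, obtained as a Borel selection via the same covering construction as in the proof of Lemma \ref{lem_ue}. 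For any opposing $\sigma_{II}$, setting $\tau_\delta=\min\{n;\,X_n\not\in B_\delta(0)\}$, the process $M_n = w(X_{n\wedge\tau\wedge\tau_\delta}) + \eta 2^{-n}$ is then a supermartingale with respect to $\{\mathcal{F}_n\}$: on $\{n\leq\tau\wedge\tau_\delta\}$ this follows from $\A_\epsilon w\leq w$ combined with the almost-minimization property of $\sigma_{I,0}$, exactly as in the proof of Lemma \ref{lem_ue}. Doob's optional stopping theorem then yields
\begin{equation*}
w(x_0) + \eta \geq \mathbb{E}[M_{\tau\wedge\tau_\delta}] \geq \theta_0\cdot \PP(\tau_\delta<\tau),
\end{equation*}
so that passing $\eta\to 0$ and using the continuity $w(x_0)\to 0$ as $x_0\to 0$, we choose $\hat\delta\in(0,\delta)$ so that $w(x_0)/\theta_0\leq \bar\theta$ for every $x_0\in B_{\hat\delta}(0)\cap\D$, with $\bar\theta<1$ depending only on $N,\p,s,C$.

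The main obstacle will be step 1, the passage from $\mathcal{L}_{s,\p}[w]\leq 0$ to the uniform discrete inequality $\A_\epsilon w\leq w$ on $\D\cap B_\delta(0)$. Theorem \ref{thm4.7} requires $\nabla w(y)\neq 0$ and local $C^2$-regularity, whereas the barrier from \cite{BCF2} may be degenerate along rays through $0$ in the cone direction, and its Lipschitz (or H\"older) norms on far-field sets must be controlled quantitatively enough so that the error quantities $m_\epsilon$ and $\omega_w(m_\epsilon)$ from \eqref{meke} stay small uniformly in the base point $y\in \D\cap B_\delta(0)$. I would handle this by combining the explicit structure of the barrier in \cite{BCF2} with direct, truncation-based bounds on $\A_\epsilon w$ in the small region where $\nabla w$ degenerates; in that region $w$ is close to its minimum value, so the preservation of $\A_\epsilon w\leq w$ can be verified by hand without appealing to the expansion in Theorem \ref{thm4.7}.
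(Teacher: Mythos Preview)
Your overall scheme---barrier, supermartingale, Doob---is the right template, but the implementation diverges from the paper's and has real gaps.

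\medskip
\noindent\textbf{What the paper does.} The barrier taken from \cite{BCF2} is not a cone-vanishing function; it is the explicit radial function $f_t(x)=\min\{2^t,|x|^{-t}\}$. The paper shows (Proposition~\ref{Lspft}) that for large $t$ one has $\mathcal{L}_{s,\p}[f_t](x)\geq C|x|^{-2s-t}>0$ on $\{|x|\geq 1\}$---a strict \emph{sub}solution. This barrier is centered not at the boundary point $x$ but at a point $\bar x$ chosen so that $B_r(\bar x)\subset\R^N\setminus\D$ lies inside the exterior cone. Player~I then \emph{maximizes} $f_t$, pulling the process toward $\bar x$ and hence out of~$\D$. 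Because the relevant region is the annulus $\{r\leq|\cdot-\bar x|\leq rR^2\}$, on which $f_t$ is $C^\infty$ with $\nabla f_t\neq 0$, Theorem~\ref{thm4.7} applies \emph{uniformly} and delivers the strict discrete inequality $\mathcal{A}_\epsilon f_t\geq f_t+C\epsilon^{2s}R^{-2s-t}$ (Corollary~\ref{ft_ring}). The strict gap absorbs all expansion errors, and a short comparison (Proposition~\ref{fe_prob}) gives the exit-probability bound.

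\medskip
\noindent\textbf{Where your proposal breaks.} Three points:

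\emph{(i) The non-local overshoot.} Your property~(b) is stated on $\partial B_\delta(0)$, but at time $\tau_\delta$ the token does not land on $\partial B_\delta$; it jumps to some point in $\R^N\setminus B_\delta(0)$. Your optional-stopping inequality needs $w\geq\theta_0$ on all of $\D\setminus B_\delta(0)$ (and $w\geq 0$ elsewhere), which is substantially stronger than what you wrote.

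\emph{(ii) The degeneracy near the tip.} Your proposed fix in the region where $\nabla w$ degenerates is that ``$w$ is close to its minimum value, so $\mathcal{A}_\epsilon w\leq w$ can be verified by hand.'' This intuition points the wrong way. For $y\in\D$ close to $0$ you have $w(y)\to 0$, while $\mathcal{A}_\epsilon w(y)$ is a non-local average: the $\sup$ half lets the opponent orient the sampling cone $T_\p^{\epsilon,\infty}$ away from $S_xC$, collecting values of $w$ of order $\theta_0$ from far away. Hence $\mathcal{A}_\epsilon w(y)$ stays bounded below by a positive constant while $w(y)\to 0$, and the inequality $\mathcal{A}_\epsilon w\leq w$ fails precisely in this region. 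There is no elementary truncation that repairs this; the paper avoids the issue entirely by working on an annulus that excludes the singular point and by exploiting the strict sign in the subsolution inequality.

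\emph{(iii) The sign of the barrier inequality.} Even away from degeneracies, you ask only for $\mathcal{L}_{s,\p}[w]\leq 0$, with no strict gap. Since Theorem~\ref{thm4.7} comes with an $o(\epsilon^{2s})$ error, a non-strict continuous inequality does not upgrade to the discrete one $\mathcal{A}_\epsilon w\leq w$ without a quantitative margin. The paper's strict bound $\mathcal{L}_{s,\p}[f_t]\geq C>0$ is exactly what makes that passage work.

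\medskip
In short: the paper's radial, strictly-subsolution barrier centered inside the exterior cone is not an arbitrary choice---it is engineered so that the region where the discrete inequality must hold is a compact annulus on which all the hypotheses of Theorem~\ref{thm4.7} are uniform, and so that the strictness eats the expansion error. Your cone-vanishing supersolution barrier would have to be built from scratch (it is not the object in \cite{BCF2}), and even then the argument would fail near the vertex.
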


The proof will rely on a suitable barrier functions, introduced in \cite{BCF2}. Namely, consider the 
uniformly continuous and bounded $f_t:\R^N\to\R$, where for each $t>0$ we define:
$$f_t(x) = \min\big\{2^t,  |x|^{-t}\big\}.$$
We start by observing a refinement of \cite[Lemma 3.10]{BCF2}:

\begin{proposition}\label{Lspft}
There exists $t_0\gg 1$ depending on $N,\p, s$, such that for all
$t\geq t_0$ and all $|x|\geq 1$ there holds:
$$\mathcal{L}_{s,\p}[f_t](x)\geq C |x|^{-2s-t},$$
with a constant $C$ depending on $N,\p, s$ but not on $t$ or $x$.
\end{proposition}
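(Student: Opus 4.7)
The plan is to combine scaling with a near/far decomposition, handling the singular behavior near $|w|=0$ by Taylor expansion and extracting the dominant positive contribution from the plateau of $f_t$ near the origin of $\mathbb{R}^N$.

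First I would perform the scaling reduction. Given $x$ with $R:=|x|\geq 1$, the gradient $\nabla f_t(x)=-t|x|^{-t-2}x$ lies in the direction $-\hat x=-x/|x|$, so the integration in $\mathcal{L}_{s,\p}[f_t](x)$ takes place on $T_\p^{0,\infty}(-\hat x)$. Rotating so that $\hat x=e_1$ and substituting $z=Rw$ (so that $d\mu_s^N(z)=R^{-2s}\,d\mu_s^N(w)$), and setting $g_{t,R}(y):=R^t f_t(Ry)=\min\{(2R)^t,|y|^{-t}\}$, one rewrites
\begin{equation*}
\mathcal{L}_{s,\p}[f_t](x)=|x|^{-2s-t}\,I(R,t),\quad I(R,t):=\int_{T_\p^{0,\infty}(-e_1)}\!\!\big[g_{t,R}(e_1+w)+g_{t,R}(e_1-w)-2\big]\,d\mu_s^N(w),
\end{equation*}
using $g_{t,R}(e_1)=1$; the problem reduces to showing $I(R,t)\geq C(N,\p,s)>0$ uniformly in $R\geq 1$ and $t\geq t_0$.

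Next I would split at $\delta_0:=1/t$ into near and far pieces $I_\mathrm{near}$ and $I_\mathrm{far}$. On the near cone $T_\p^{0,\delta_0}(-e_1)$ one has $|e_1\pm w|\in[1-1/t,1+1/t]$, so $g_{t,R}(e_1\pm w)=|e_1\pm w|^{-t}$. Taylor expansion of $y\mapsto|y|^{-t}$ at $e_1$, together with the odd-term cancellation $w\mapsto-w$ and the uniform estimate $(1-1/t)^{-t-2}=O(1)$ on the Hessian norm, yields $|g_{t,R}(e_1+w)+g_{t,R}(e_1-w)-2|\leq Ct^2|w|^2$. Integrating against $d\mu_s^N$ and invoking the second-moment identity in Lemma \ref{lem_Tp}(i) produces $|I_\mathrm{near}|\leq C_1\,t^{2s}$ with $C_1=C_1(N,\p,s)$.

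For $I_\mathrm{far}$ the decisive observation is that near $w=-e_1$ the argument $\eta:=e_1+w$ approaches the origin of $\mathbb{R}^N$, where $g_{t,R}(\eta)$ attains values up to $(2R)^t$. Define $V:=\{w\in T_\p^{0,\infty}(-e_1):|w+e_1|\leq r_*\}$ with $r_*:=\sin(\alpha_\p)/4$; the choice guarantees $\sin\angle(w,-e_1)\leq r_*/(1-r_*)<\sin\alpha_\p$ and $|w|\geq 1-r_*>\delta_0$, so $V\subset T_\p^{\delta_0,\infty}(-e_1)$. Changing variables $\eta=e_1+w$ gives $d\mu_s^N(w)\geq C(N,s)(5/4)^{-N-2s}\,d\eta$ on $V$, reducing the task to bounding $\int_{|\eta|\leq r_*}g_{t,R}(\eta)\,d\eta$ from below. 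A case split on whether $1/(2R)$ is smaller or larger than $r_*$ shows that when $R\leq 1/(2r_*)$ the integral equals $(2R)^t\omega_N r_*^N\geq \omega_N(2r_*)^N\cdot 2^{t-N}$, and when $R>1/(2r_*)$ the plateau ball plus the polynomial tail yield at least $\omega_N(2R)^{t-N}\geq\omega_N\cdot 2^{t-N}$; in both regimes one gets a lower bound of the form $C_2\cdot 2^{t-N}$ with $C_2=C_2(N,\p,s)$. Combined with $g_{t,R}\geq 0$ on the complement and $2\mu_s^N(T_\p^{\delta_0,\infty})=C_3\,t^{2s}$, this produces $I_\mathrm{far}\geq C_2\cdot 2^{t-N}-C_3\,t^{2s}$.

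Putting the two halves together gives $I(R,t)\geq C_2\cdot 2^{t-N}-(C_1+C_3)\,t^{2s}$; since the exponential $2^{t-N}$ eventually dominates the polynomial $t^{2s}$, there exists $t_0=t_0(N,\p,s)$ such that $I(R,t)\geq\tfrac{C_2}{2}\cdot 2^{t_0-N}=:C>0$ for every $t\geq t_0$ and every $R\geq 1$, which yields the claim after scaling back. The main technical obstacle will be verifying that both the small-$R$ case (where the plateau $g_{t,R}=(2R)^t$ covers the whole ball $|\eta|\leq r_*$ and the bound relies on a geometric volume factor $r_*^N$) and the large-$R$ case (where the plateau occupies only a tiny sub-ball $|\eta|\leq 1/(2R)$ and the bound relies on the precise value of the polynomial tail integral of $|\eta|^{-t}$) yield the same exponential rate $2^{t-N}$ with a constant independent of $R\geq 1$.
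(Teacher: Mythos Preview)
Your proposal is correct and takes a genuinely different route from the paper's own proof.

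\medskip

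\noindent\textbf{Comparison.} After the same scaling reduction, the paper extracts positivity from the \emph{second-order Taylor term} near $w=0$: writing $|e_1\pm w|^{-t}-1$ to second order and integrating over $T_\p^{0,r}$, the dominant contribution is $\frac{t(t+2)}{2}\int_{T_\p^{0,r}}\langle e_1,w\rangle^2\,d\mu_s^N(w)$, and the anisotropic moment identity in Lemma~\ref{lem_Tp}(i) (namely $\fint_{A_\p}\langle z,e_1\rangle^2\,d\sigma=\frac{\p-1}{N+\p-2}$) makes this outweigh the correction $-t\int|w|^2\,d\mu_s^N$ for $\p\geq 2$. The cutoff radius $r$ is then tuned in terms of $t$ so that this $\sim t^2 r^{2-2s}$ gain also beats the far-field loss $\sim r^{-2s}$ coming from the crude bound $L_{f_t}\geq -2$; the resulting lower bound grows only like $t^s$. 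By contrast, you discard the sign information in $I_{\mathrm{near}}$ entirely and instead draw positivity from the \emph{plateau value} $2^t$ of $f_t$: because the cone axis points toward the origin, a fixed ball around $w=-e_1$ maps under $\eta=e_1+w$ into a neighborhood of $0$ where $g_{t,R}$ is at least of order $2^{t}$, yielding an exponentially large positive piece that overwhelms all $t^{2s}$ losses.

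Your argument is in some sense more robust --- it needs only that the cone has positive aperture and contains its axis, not the specific moment structure of $T_\p$ --- and delivers a much stronger (exponential in $t$) lower bound than required. The paper's argument is more delicate but shows exactly how the choice of aperture $\alpha_\p$ enters, which is thematically closer to the role $T_\p$ plays elsewhere in the paper. Both yield a constant $C=C(N,\p,s)$ independent of $t\geq t_0$ and of $|x|\geq 1$, so either suffices for Proposition~\ref{Lspft}.
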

\begin{proof}
Observe that $\mathcal{L}_{s,\p}[f_t](x)=\mathcal{L}_{s,\p}[f_t](|x| e_1)$  by
rotational invariance. It hence suffices to estimate, after changing variables:
\begin{equation}\label{muno}
\begin{split}
\int_{T_\p^{0,\infty}}\frac{L_{f_t}(|x| e_1, z, z)}{|z|^{N+2s}}\;\mbox{d}z & =
|x|^{-2s}\int_{T_\p^{0,\infty}}\frac{f_t(|x| (e_1+z)) + f_t(|x|( 
  e_1-z))-2f_t(|x| e_1)}{|z|^{N+2s}}\;\mbox{d}z \\ & \geq 
|x|^{-2s-t}\int_{T_\p^{0,\infty}}\frac{f_t(e_1+z) + f_t(e_1-z)-2f_t(e_1)}{|z|^{N+2s}}\;\mbox{d}z, 
\end{split}
\end{equation}
where in the last step above we used that $f_t(|x| z)\geq
|x|^{-t}f_t(z)$ which can be easily checked directly. Further, $L_{f_t}(e_1, z,
z)\geq -2$ for all $z$, while for $|z|\leq \frac{1}{2}$ we have:
\begin{equation*}
\begin{split}
L_{f_t}(e_1, z, z)  & =  |1+|z|^2 +
2\langle e_1, z\rangle|^{-t/2} +  |1+|z|^2 - 2\langle e_1, z\rangle|^{-t/2} -2
\\ & \geq 2(1+|z|^2)^{-t/2} + \frac{t}{2}\big(\frac{t}{2}+1\big)(2\langle
e_1, z\rangle)^2 \big(1+|z|^2\big)^{-t/2-2} - 2 \\ & \geq 2\big(1-\frac{t}{2}|z|^2\big)
+ \frac{t}{2} (t+2) \langle e_1, z\rangle^2 \big(1 - \big(\frac{t}{2}+2\big)|z|^2\big)^{-t/2-2} -2\\ & 
= \frac{t}{2}(t+2) \langle e_1, z\rangle^2 - t|z|^2 - \frac{t}{4}(t+2)(t+4) \langle e_1, z\rangle^2|z|^2,
\end{split}
\end{equation*}
by Taylor's expansion and since $(1+|z|^2)^{-\alpha}\geq 1- \alpha|z|^2$ whenever $\alpha>0$.
Thus (\ref{muno}) becomes:
\begin{equation*}
\begin{split}
& \int_{T_\p^{0,\infty}}  \frac{L_{f_t}(|x| e_1, z, z)}{|z|^{N+2s}}\;\mbox{d}z  \geq
|x|^{-2s-t}\Big(\int_{T_\p^{0,r}} \frac{L_{f_t}(e_1, z, z)
}{|z|^{N+2s}}\;\mbox{d}z - \int_{T_\p^{r, \infty}} \frac{2}{|z|^{N+2s}}\;\mbox{d}z \Big)\\ &
\quad = |x|^{-2s-t} |A_\p|\cdot\Big(
\frac{t}{2}(t+2)\frac{\p-1}{N+\p-2}\cdot\frac{r^{2-2s}}{2-2s} - t \frac{r^{2-2s}}{2-2s} \\ & 
\quad \qquad\quad\qquad\qquad \quad - 
\frac{t}{4}(t+2)(t+4) \frac{\p-1}{N+\p-2}\cdot\frac{r^{4-2s}}{4-2s} - \frac{r^{-2s}}{s} \Big)
\doteq  |x|^{-2s-t} |A_\p|\cdot I_{N,\p, s,t,r}
\end{split}
\end{equation*}
by recalling Lemma \ref{lem_Tp} (i) and where we fixed some
appropriate $r<\frac{1}{2}$. We now estimate the quantity $I_{N,\p, s,t,r}$. 
When $\frac{t+2}{2}\cdot\frac{\p-1}{N+\p-2}\geq 2$, then the first two
terms in $I_{N,\p, s,t,r}$ are bounded from below by:
$\frac{t}{4}(t+2)\frac{\p-1}{N+\p-2}\cdot
\frac{r^{2-2s}}{2-2s}$. Further, when
$(t+4)\frac{r^2}{4-2s}\leq\frac{1}{2(2-2s)}$, then the first three
terms in $I_{N,\p, s,t,r}$ are bounded from below by:
$\frac{t}{8}(t+2)\frac{\p-1}{N+\p-2}\cdot
\frac{r^{2-2s}}{2-2s}$. Finally, when
$\frac{t}{8}(t+2)\frac{\p-1}{N+\p-2}\cdot\frac{r^2}{2-2s}\geq
\frac{2}{s}$, then we have:
$$I_{N,\p, s,t,r}\geq \frac{r^{-2s}}{s}\geq \frac{1}{s}\Big(\frac{2-2s}{2-s} (t+4)\Big)^s.$$
It is clear that the above listed conditions, namely:
$$\frac{t+2}{2}\cdot\frac{\p-1}{N+\p-2}\geq 2\quad\mbox{ and }\quad 
\exists r< \frac{1}{2}:\quad \frac{16
  (2-2s)(N+\p-2)}{s(\p-1)}\cdot\frac{1}{t(t+2)}\leq r^2\leq \frac{2-s}{2-2s}\cdot\frac{1}{t+4}$$
are compatible for sufficiently large $t\geq t_0(N,\p,s)$. The proof is done.
\end{proof}

\begin{corollary}\label{ft_ring}
Let $t_0\gg 1$ be as Proposition \ref{Lspft}. For every $t\geq t_0$
and $R>1$ there exists $\epsilon_0>0$, depending on $N,\p,s,t,R$ such that:
$$\mathcal{A}_{\epsilon}f_t(x)\geq f_t(x) + C \epsilon^{2s} R^{-2s-t}
\qquad\mbox{for all }\; |x|\in [1, R], ~~\epsilon<\epsilon_0,$$
with a constant $C$ depending only on $N,\p, s$.
\end{corollary}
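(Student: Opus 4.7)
The strategy is to combine the asymptotic expansion of Theorem \ref{thm4.7} (together with the Lipschitz-case error estimate from Remark \ref{rem4}(iii)) with the pointwise lower bound on $\mathcal{L}_{s,\p}[f_t](x)$ supplied by Proposition \ref{Lspft}. The $\epsilon^{2s}$-order term in the expansion carries the positive contribution $\gtrsim \epsilon^{2s} R^{-2s-t}$ on $\{|x|\in[1,R]\}$; then we shall absorb the higher-order remainders into half of this quantity by shrinking $\epsilon$.

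The first step is to verify that all quantities entering the error bound of Theorem \ref{thm4.7} are controlled uniformly for $|x|\in[1,R]$, with bounds depending only on $N,\p,s,t,R$. Since $f_t(y)=|y|^{-t}$ on $\{|y|\geq 1/2\}$ and $f_t\equiv 2^t$ on $\{|y|\leq 1/2\}$, the function $f_t$ is $C^\infty$ on $\bar B_{1/4}(x)$ for every $|x|\geq 1$; hence we fix $r_x=1/4$. On that ball, $|\nabla^2 f_t|\leq C(t)$ and the gradient satisfies $|p_x|=|\nabla f_t(x)|=t|x|^{-t-1}\geq tR^{-t-1}$. Globally, $f_t$ is bounded by $2^t$ and Lipschitz with constant $\leq t\cdot 2^{t+1}$, since $|\nabla f_t(y)|\leq t|y|^{-t-1}$ for $|y|\geq 1/2$ and vanishes otherwise. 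Thus $C_x,\ 1/|p_x|,\ \mathrm{Lip}_{f_t},\ \|f_t\|_{L^\infty}$ are all bounded by constants $C(N,\p,s,t,R)$, uniformly in $|x|\in[1,R]$.

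The second step invokes Theorem \ref{thm4.7} and Remark \ref{rem4}(iii): since $f_t$ is globally Lipschitz, the error term in (\ref{sec3_m}) is controlled by $C_{N,\p,s}\, C(r_x)\, C(C_x,1/|p_x|,\mathrm{Lip}_{f_t})\,\epsilon^{4s-1} + C_x\,\tfrac{s}{1-s}\epsilon^2$, and hence by $C_1(N,\p,s,t,R)(\epsilon^{4s-1}+\epsilon^2)$ uniformly for $|x|\in[1,R]$. Combining with Proposition \ref{Lspft}, which gives $\mathcal{L}_{s,\p}[f_t](x)\geq C_0(N,\p,s)\,|x|^{-2s-t}\geq C_0 R^{-2s-t}$, we obtain
\begin{equation*}
\mathcal{A}_\epsilon f_t(x) - f_t(x) \;\geq\; \frac{sC_0}{C(N,s)|A_\p|}\,\epsilon^{2s}R^{-2s-t} \;-\; C_1(N,\p,s,t,R)\bigl(\epsilon^{4s-1}+\epsilon^2\bigr).
\end{equation*}

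Since $s\in(\tfrac12,1)$ we have both $4s-1>2s$ and $2>2s$, so for $\epsilon$ sufficiently small (depending on $N,\p,s,t,R$) the remainder $C_1(\epsilon^{4s-1}+\epsilon^2)$ is dominated by half of the main term $\frac{sC_0}{C(N,s)|A_\p|}\epsilon^{2s}R^{-2s-t}$. Defining $\epsilon_0$ accordingly, this yields the desired lower bound with constant $C=\frac{sC_0}{2C(N,s)|A_\p|}$, which depends only on $N,\p,s$, as required. The only delicate point in executing this plan is bookkeeping of the constants to ensure that the $R$-dependence and $t$-dependence are relegated to the threshold $\epsilon_0$ and do not contaminate the prefactor $C$; but this is automatic once one tracks carefully that the $C_0$ in Proposition \ref{Lspft} is $(N,\p,s)$-dependent only.
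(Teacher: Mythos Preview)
Your proof is correct and follows essentially the same approach as the paper's: apply Theorem \ref{thm4.7} on the annulus with $r_x$ fixed small (the paper takes $r_x\in(\tfrac14,\tfrac12)$), feed in the lower bound from Proposition \ref{Lspft}, and absorb the error terms by shrinking $\epsilon$. The only cosmetic difference is that you invoke the Lipschitz case of Remark \ref{rem4}(iii) to get an error of order $\epsilon^{4s-1}+\epsilon^2$, whereas the paper uses the cruder Remark \ref{rem4}(i) (together with $\omega_{f_t}(m)\leq C_t m$) to get $\epsilon^{3s-1/2}+\epsilon^2$; both are $o(\epsilon^{2s})$ for $s>\tfrac12$, so the conclusion is the same.
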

\begin{proof}
We apply Theorem \ref{thm4.7} with $r_x\in \big(\frac{1}{4},
\frac{1}{2}\big)$ and note that $C_x\leq C(N,t)$ and $|p_x|\geq t
R^{-t-1}$ whenever $|x|\in [1,R]$. In view of Proposition \ref{Lspft} it follows that:
$$\mathcal{A}_\epsilon f_t(x) \geq f_t(x) + C_{N,\p,s}\epsilon^{2s}
R^{-2s-t} - C_{N,s,t}\epsilon^2 - C_{N,s,t}\epsilon^{2s}\big(m_\epsilon +\omega_{f_t}(m_\epsilon)\big).$$
Recalling Remark \ref{rem4} (i) we see that: $m_\epsilon\leq
C_{N,\p,s,t}\epsilon^{s-\frac{1}{2}}R^{t+1}$ and $\omega_{f_t}(m_\epsilon)\leq C_tm_\epsilon$.
Hence:
$$\mathcal{A}_\epsilon f_t(x) \geq f_t(x) + C_{N,\p,s}\epsilon^{2s}
\big(R^{-2s-t} - C_{N,s,t}\epsilon^{2-2s} - C_{N,\p,s,t} \epsilon^{s-\frac{1}{2}}R^{t+1} \big),$$
and so the result follows for $C_{N,s,t}\epsilon^{2-2s}  + C_{N,\p,s,t} \epsilon^{s-\frac{1}{2}}R^{t+1} \leq \frac{1}{2}
R^{-2s-t}$.
\end{proof}

\medskip

Towards the proof of Theorem \ref{cone_thengood}, we note that:

\begin{proposition}\label{fe_prob}
Given $R>1$ and $t\geq t_0$, $\epsilon<\epsilon_0$  as in Proposition \ref{Lspft} and Corollary \ref{ft_ring}, let
$v_\epsilon:\R^N\to\R$ be the unique bounded, Borel solution to the problem:
$$ v_\epsilon (x) = \left\{\begin{array}{ll}\mathcal{A}_\epsilon
    v_\epsilon (x) & \mbox{ for } \; |x|\in (1, R)\vspace{1mm}\\ f_t(x) & \mbox{ for } \; |x|\not\in (1, R).
\end{array}\right.$$
Then we have:
\begin{enumerate}[leftmargin=7mm]
\item[(i)] $v_\epsilon \geq f_t$ in $\R^N$,
\item[(ii)] For every $\tilde R\in (1,R)$ exists $\theta_{\tilde R, R}<1$
depending only on $R,\tilde R, N, \p, s$ such that for all $|x|\in [1,\tilde R]$ and  $\epsilon<\epsilon_0$ there holds:
$$\exists \sigma_{I,0}\quad\forall\sigma_{II}\qquad
\mathbb{P}\big(|X^{\epsilon, x, \sigma_{I,0},  \sigma_{II}}_\tau|\geq R\big)\leq\theta_{\tilde R, R},$$
where $\tau$ denotes the first exit time from the annulus $B_{R}(0)\setminus \bar B_1(0)$. 
\item[(iii)] For a given $r>0$, let $\bar \tau = \min\{i\geq 0;~
  |X_i|\not\in (r, rR^2)\}$. Then, for every $|x|\in [r,rR]$ and
$\epsilon<r\epsilon_0$  there holds, with a constant $\theta_R<1$
depending only on $R, N,\p,s$:
$$\exists \sigma_{I,0}\quad\forall\sigma_{II}\qquad
\PP \big(|X^{\epsilon, x, \sigma_{I,0}, \sigma_{II}}_{\bar \tau}|\geq rR^2\big)\leq\theta_R.$$
\end{enumerate}
\end{proposition}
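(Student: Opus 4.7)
The plan is to prove the three items in order, using respectively the monotonicity of the DPP from Theorem \ref{th_exists}, the game-value representation from Lemma \ref{lem_ue}, and the intrinsic scale invariance of $\A_\epsilon$.

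\emph{Part (i).} I would start the monotone iteration of Theorem \ref{th_exists} from $v_0 = f_t$ rather than from the constant $\inf F$. By Corollary \ref{ft_ring} we have $\A_\epsilon f_t \geq f_t$ on the annulus $\{1\le|x|\le R\}$, while $f_t$ agrees with the prescribed boundary data on its complement, so the operator $S_\epsilon$ from (\ref{Se}) (adapted with domain $\D' = B_R(0)\setminus \bar B_1(0)$ and data $f_t$) satisfies $S_\epsilon f_t \geq f_t$ pointwise. Order preservation of $S_\epsilon$ makes $\{S_\epsilon^n f_t\}_{n\ge 0}$ non-decreasing and uniformly bounded by $2^t$; its pointwise limit is a bounded Borel fixed point of $S_\epsilon$ which, by uniqueness, must equal $v_\epsilon$. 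This forces $v_\epsilon \geq f_t$ everywhere.

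\emph{Part (ii).} Lemma \ref{lem_ue} applies verbatim with $\D'$ replacing $\D$, identifying $v_\epsilon$ with the game value for payoff $f_t$. Fix $\delta \in (0,\tilde R^{-t}-R^{-t})$ and, using the uniform continuity granted by Lemma \ref{conti}, pick an almost-optimal Borel strategy $\sigma_{I,0}$ such that
\begin{equation*}
v_\epsilon(x) - \delta \;\leq\; \mathbb{E}\bigl[f_t(X_\tau^{\epsilon,x,\sigma_{I,0},\sigma_{II}})\bigr]\qquad\text{for every }\sigma_{II}.
\end{equation*}
Using the global bound $f_t \leq 2^t$ and $f_t(y) \leq R^{-t}$ for $|y|\geq R$, splitting the expectation at the exit side and writing $p = \PP(|X_\tau|\geq R)$ yields $\mathbb{E}[f_t(X_\tau)] \leq 2^t - (2^t-R^{-t})p$. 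Combining with the lower bound $v_\epsilon(x) \geq f_t(x) = |x|^{-t} \geq \tilde R^{-t}$ supplied by (i) (valid since $|x|\in[1,\tilde R]$ lies in the regime $f_t(x)=|x|^{-t}$), I conclude
\begin{equation*}
p \;\leq\; \frac{2^t - \tilde R^{-t} + \delta}{2^t - R^{-t}} \;=:\; \theta_{\tilde R,R},
\end{equation*}
with $\theta_{\tilde R,R}<1$ by the choice of $\delta$, and depending only on $N,\p,s,R,\tilde R$.

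\emph{Part (iii) and the main obstacle.} This is the dilation-invariant version of (ii). Under $x \mapsto y = x/r$ together with $\epsilon \mapsto \epsilon/r$, the homogeneity of $|z|^{-N-2s}$ gives
\begin{equation*}
\fint_{T_\p^{\epsilon,\infty}(y)} u(x+z)\dmN \;=\; \fint_{T_\p^{\epsilon/r,\infty}(y)} u(x + rw)\,\mathrm{d}\mu_s^N(w),
\end{equation*}
so $\A_\epsilon$, the corresponding DPP, and the game process (\ref{processMp}) are all invariant under this rescaling. Applying (ii) with outer radius $R^2$, inner $\tilde R = R$, and scale $\epsilon/r$ to the rescaled process $Y_n = X_n/r$ on the annulus $\{1<|y|<R^2\}$ delivers the claim with $\theta_R := \theta_{R,R^2}$, precisely under the hypothesis $\epsilon/r < \epsilon_0$. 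The principal obstacle lies in part (ii), specifically in the game-value identification on the annulus: re-running the argument of Lemma \ref{lem_ue} with the annular domain and constructing Borel almost-optimal strategies via Lemma \ref{conti} is where the genuine work resides; the exit-probability estimate is then an elementary algebraic manipulation, and the reduction in (iii) is purely mechanical.
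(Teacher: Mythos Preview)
Your argument is correct and parts (ii) and (iii) match the paper's proof closely, including the choice $\theta_R=\theta_{R,R^2}$ via rescaling. The one genuine difference is in part (i): you run the monotone iteration of Theorem~\ref{th_exists} starting from $f_t$ and invoke uniqueness, whereas the paper argues directly by contradiction, writing
\[
v_\epsilon(x)-f_t(x)=\big(\A_\epsilon v_\epsilon(x)-\A_\epsilon f_t(x)\big)+\big(\A_\epsilon f_t(x)-f_t(x)\big)\geq \inf_{\R^N}(v_\epsilon-f_t)+C\epsilon^{2s}R^{-2s-t}
\]
and observing that a negative infimum would force $M_\epsilon>M_\epsilon$. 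Your route is arguably cleaner and more constructive; the paper's is shorter and does not need to re-verify that the monotone limit is a fixed point. One small point: in (ii) the paper explicitly fixes $t=t_0$ so that $\theta_{\tilde R,R}$ depends only on $N,\p,s,R,\tilde R$; you should do the same, since as written your $\theta_{\tilde R,R}$ carries an implicit dependence on $t$ through both the formula and $\epsilon_0$.
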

\begin{proof}
{\bf 1.} To show (i), observe that by Corollary \ref{ft_ring} we have, for all $|x|\in (1,R)$:
\begin{equation}\label{bda}
\begin{split}
v_\epsilon(x) - f_t(x) & = \big(\mathcal{A}_\epsilon v_\epsilon (x) -
\mathcal{A}_\epsilon f_t(x)\big) + \big( \mathcal{A}_\epsilon f_t(x) -
f_t(x)\big) \\ & \geq \inf_{|y|=1} \fint_{T_\p^{\epsilon,\infty}(y)}
(v_\epsilon -f_t)(x+z)\dmN + C_{N,\p,s}\epsilon^{2s} R^{-2s-t}\\ &
\geq \inf_{\R^N} (v_\epsilon-f_t) + C_{N,\p,s}\epsilon^{2s} R^{-2s-t}.
\end{split}
\end{equation}
Assume that $M_\epsilon \doteq \inf_{\D} (v_\epsilon-f_t)<0$, in which
case there also holds: $M_\epsilon =\inf_{\R^N} (v_\epsilon-f_t)$. Let
$\{x_n\}_{n=1}^\infty$ me a minimizing sequence in $\D$. Applying
(\ref{bda}) at each $x_n$ and passing to the limit $n\to\infty$, it
follows that: $M_\epsilon>M_\epsilon$, which is a contradiction.

\smallskip

{\bf 2.} To show (ii), fix $t=t_0$ and recall that (i) implies:
$$0\leq v_\epsilon(x) -f_t(x)
=\sup_{\sigma_I}\inf_{\sigma_{II}}\mathbb{E}\big[f_t\circ X^{\epsilon, x, \sigma_{I},  \sigma_{II}}_\tau
- f_t(x)\big].$$
Since $\frac{\tilde R^{-t} - R^{-t}}{2}>0$, it follows that
there exists $\sigma_{I,0}$ such that for all $\sigma_{II}$ there holds:
\begin{equation*}
\begin{split}
- \frac{\tilde R^{-t} - R^{-t}}{2}&\leq \mathbb{E}\big[f_t\circ X^{\epsilon, x, \sigma_{I,0},  \sigma_{II}}_\tau
- f_t(x)\big] \\ & = \int_{\{|X_\tau|\geq R\}} f_t(X_\tau) -
f_t(x)\;\mbox{d}\PP + \int_{\{|X_\tau|<\leq 1\}} f_t(X_\tau) - f_t(x)\;\mbox{d}\PP 
\\ & \leq \PP(|X_\tau|\geq R) \big(R^{-t} - \tilde R^{-t} \big) +
(1-\PP(|X_\tau|\geq R)) \big(2^{t} - \tilde R^{-t} \big)
\\ & = \PP(|X_\tau|\geq R) \big(R^{-t} - 2^t\big) + 2^t - \tilde R^{-t}.
\end{split}
\end{equation*}
Consequently, we obtain: $\PP(|X_\tau|\geq R)\leq \frac{\frac{1}{2} (\tilde R^{-t} - R^{-t}) 
+ 2^t -\tilde R^{-t}}{2^t- R^{-t}} = \frac{ 2^t - \frac{1}{2} (\tilde
R^{-t} + R^{-t})}{2^t- R^{-t}} \doteq\theta_{\tilde R, R}<1.$

The statement in (iii)  follows by scaling invariance after applying
(ii) to $R<R^2$ in place of $\tilde R<R$, so that $\theta_R\doteq\theta_{\tilde R, R}$. This ends the proof.
\end{proof}

\medskip

We finally are ready to give:

\smallskip

\noindent {\bf Proof of Theorem \ref{cone_thengood}}

The cone condition implies existence of  $d>1$ and $\bar r>0$ such
that for all $r<\bar r$ there is a ball $B_r(\bar x)\subset \R^N\setminus
\D$, centered at $\bar x$ with $|x-\bar x|=rd$. Define $R=2d-1$, so that $x\in B_{rR}(\bar x)$.

Given $\delta>0$, let $r<\bar r$ be such that: $\delta\geq rR^2 + rd =
r\big(R^2 + \frac{R+1}{2}\big)$. Letting $\hat\delta =rd$ we get:
\begin{equation}\label{g}
B_{\hat\delta}(x)\subset B_{rR}(\bar x)\setminus \bar
B_r(x)\quad\mbox{ and }\quad B_{rR^2}(\bar x) \subset B_\delta(x).
\end{equation}
Fix $x_0\in B_{\hat\delta}(x)\cap\D$ and $\epsilon< r\epsilon_0$,
where $\epsilon_0$ is as in Proposition \ref{fe_prob} (iii). Denote
by $\bar\tau$ the exit time from the annulus $B_{rR^2}(\bar x) \setminus \bar B_r(x)$.
Then, there exists $\sigma_{I,0}$ such that for all $\sigma_{II}$ there holds:
$$\PP\big(\exists n<\tau \quad  X_n^{\epsilon, x_0,\sigma_{I,0},
  \sigma_{II}}\not\in B_\delta(x)\big)\leq 
\PP\big(X_{\bar\tau} \not\in B_{rR^2}(\bar x)\big)\leq \theta_R\doteq\bar\theta. $$
The first inequality above follows from (\ref{g}), while the second
inequality is a direct consequence of Proposition \ref{fe_prob} (iii).
\endproof

\section{Approximate equicontinuity of solutions to \ref{dppe}}\label{sec_convergence}

In this section, we assume that the open, bounded domain
$\D\subset\R^N$ satisfies the external cone condition. Our goal is
to show that the family $\{u_\epsilon\}_{\epsilon\to 0}$ of solutions
to \ref{dppe} with a given bounded, uniformly continuous $F:\R^N\to \R$
is then approximately equicontinuous, namely:
\begin{equation}\label{ecpe}
\forall\xi>0\quad \exists \hat\epsilon, \delta>0 \quad 
\forall \epsilon\in (0,\hat\epsilon)\quad \forall x, \bar
x\in\R^N\qquad  |x-\bar
x|<\delta \implies |u_\epsilon(x) - u_\epsilon(\bar x)|\leq\xi.
\end{equation}
Together with the uniform boundedness of the family $\{u_\epsilon\}_{\epsilon\to
  0}$, the above condition yields, via the Ascoli-Arzel\`a theorem, that
every sequence in the said family has a further subsequence, converging
uniformly as $\epsilon\to 0$ to some continuous limit function.

\begin{lemma}\label{bdary_enough}
Condition (\ref{ecpe}) is implied by the following weaker
equicontinuity statement:
\begin{equation}\label{ecpe1}
\begin{split}
\forall\xi>0\quad \exists \hat\epsilon, \delta>0 \quad 
\forall \epsilon\in (0,\hat\epsilon)\quad & \forall x\in\D,~ \bar
x\in\partial\D\qquad \\ &  |x-\bar x|<\delta \implies |u_\epsilon(x) - u_\epsilon(\bar x)|\leq\xi.
\end{split}
\end{equation}
\end{lemma}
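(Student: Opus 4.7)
The approach is a shift-and-compare argument exploiting the monotonicity of the DPP solution operator from Theorem \ref{th_exists}. Since $u_\epsilon=F$ on $\R^N\setminus\D$ and $F$ is uniformly continuous, the conclusion (\ref{ecpe}) is automatic when both $x,\bar x\notin\D$, so only the cases when at least one of the two points lies in $\D$ require attention. The plan is to set $h\doteq \bar x-x$ and introduce the translate $\tilde u_\epsilon(y)\doteq u_\epsilon(y+h)$. Since $\A_\epsilon$ commutes with spatial translations, $\tilde u_\epsilon$ is bounded Borel and satisfies $\tilde u_\epsilon=\A_\epsilon \tilde u_\epsilon$ on $\D-h$. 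In particular, both $u_\epsilon$ and $\tilde u_\epsilon$ solve the DPP on the auxiliary bounded open set $\D'\doteq \D\cap(\D-h)$.

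The next step is to note that the monotonicity assertion of Theorem \ref{th_exists} (whose proof proceeds by iterating the operator $S_\epsilon$ from a constant baseline and passing to the monotone limit) uses no property of $\D$ beyond its being bounded and open, and hence applies verbatim with $\D$ replaced by $\D'$. Consequently, whenever a constant $M\geq 0$ satisfies $|u_\epsilon-\tilde u_\epsilon|\leq M$ on $\R^N\setminus\D'$, the same bound extends to all of $\R^N$. The bulk of the argument thus reduces to estimating $|u_\epsilon(y)-\tilde u_\epsilon(y)|$ at points $y$ with $y\notin\D$ or $y+h\notin\D$. Given $\xi>0$, I would take $\hat\epsilon_1,\delta_1>0$ from (\ref{ecpe1}) applied with $\xi/2$ in place of $\xi$, and then fix $\delta\in(0,\delta_1/2)$ so small that $\omega_F(2\delta)<\xi/2$, where $\omega_F$ denotes the modulus of continuity of $F$.

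For $|h|<\delta$ and $y\in\R^N\setminus\D'$, I distinguish three cases. (a) If $y,y+h\notin\D$, then $|u_\epsilon(y)-\tilde u_\epsilon(y)|=|F(y)-F(y+h)|\leq\omega_F(\delta)\leq \xi/2$. (b) If $y\notin\D$ but $y+h\in\D$, then $\mathrm{dist}(y+h,\partial\D)\leq|h|$, so I pick $\hat y\in\partial\D$ with $|\hat y-(y+h)|\leq|h|$, whence $|\hat y-y|\leq 2|h|$; since $\hat y\in\partial\D\subset\R^N\setminus\D$ we have $u_\epsilon(\hat y)=F(\hat y)$, while (\ref{ecpe1}) provides $|u_\epsilon(y+h)-u_\epsilon(\hat y)|\leq\xi/2$, so by the triangle inequality $|u_\epsilon(y)-u_\epsilon(y+h)|\leq \omega_F(2|h|)+\xi/2\leq\xi$. (c) The case $y\in\D$, $y+h\notin\D$ is handled symmetrically. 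Applying the extended monotonicity with $M=\xi$ then propagates $|u_\epsilon-\tilde u_\epsilon|\leq \xi$ to all of $\R^N$; evaluating at $y=x$ gives $|u_\epsilon(x)-u_\epsilon(\bar x)|\leq\xi$, yielding (\ref{ecpe}) with $\hat\epsilon\doteq \hat\epsilon_1$.

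The only mild obstacle is verifying that the uniqueness/monotonicity statement of Theorem \ref{th_exists} transfers from $\D$ to the auxiliary domain $\D'$, but this amounts to a direct inspection of the proof there, which needs only $\D'$ to be bounded and open. All remaining manipulations are elementary set inclusions and invocations of uniform continuity.
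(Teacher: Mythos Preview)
Your proof is correct and follows essentially the same translation-and-monotonicity argument as the paper: both compare $u_\epsilon$ with its spatial translate on a smaller auxiliary domain where each satisfies the DPP, verify the boundary comparison via (\ref{ecpe1}) together with the uniform continuity of $F$, and then invoke the monotonicity of the solution operator from Theorem \ref{th_exists}. The only cosmetic difference is that the paper works on the inner set $\D^{\hat\delta}=\{z\in\D:\mathrm{dist}(z,\R^N\setminus\D)>\hat\delta\}$ rather than your $\D'=\D\cap(\D-h)$, but the logic is identical.
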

\begin{proof}
Since $u_\epsilon = F$ on $\R^N\setminus\D$, we get (\ref{ecpe}) for
$x,\bar x\not\in\D$ in view of the uniform continuity of $F$. By
(\ref{ecpe1}), it suffices to consider the case $x,\bar x\in\D$. Fix
$\xi>0$ and choose $\hat\epsilon, \hat\delta>0$ such that:
$$\epsilon\in (0,\hat\epsilon),~~ z\in\R^N\setminus \D^{\hat\delta},~~
|w|\leq\hat\delta \implies |u_\epsilon(z) - u_\epsilon(z+w)|\leq \xi,$$
where we denoted the inner set:
$$\D^{\hat\delta} = \big\{x\in \D;~ \mathrm{dist}(x,\R^N\setminus\D)>\hat\delta\big\}.$$

Fix $x,\bar x\in \D$ such that $|x-\bar x|<\frac{\hat\delta}{2}$
and consider the function $\bar u_\epsilon:\R^N\to\R$ given by:
$$\bar u_\epsilon(z) \doteq u_\epsilon\big(z-(x-\bar x)\big)+\xi. $$
Observe that $\bar u_\epsilon$ solves \ref{dppe} on $\D^{\hat\delta}$, and
subject to its own external data $\bar u_\epsilon$ on
$\R^N\setminus\D^{\hat\delta}$, as:
\begin{equation*}
\begin{split}
\mathcal{A}_\epsilon \bar u_\epsilon(z) & = \frac{1}{2}
\big(\inf_{|y|=1} + \sup_{|y|=1}\big)\fint_{T_\p^{\epsilon,
    \infty}(y)} u_\epsilon\big(z-(x-\bar x)+\hat
z\big)+\xi\;\mathrm{d}\mu_s^N(\hat z) \\ & = u_\epsilon(z-(x-\bar x)) +\xi
= \bar u_\epsilon(z) \qquad \mbox{for all }\; z\in \D^{\hat\delta}.
\end{split}
\end{equation*}
On the other hand, $u_\epsilon$ solves the same problem (with its own external data). Since:
$$u_\epsilon(z)-\bar u_\epsilon(z) = u_\epsilon(z) -
u_\epsilon(z-(x-\bar x))-\xi\leq 0 \qquad\mbox{for all }\; z\not\in \D^{\hat\delta},$$ 
the monotonicity of the solution operator to \ref{dppe} 
(see Theorem \ref{th_exists}) implies that
$u_\epsilon\leq\bar u_\epsilon$ in $\R^N$, so in particular we get:
$u_\epsilon(x) \leq u_\epsilon(\bar x)+\xi$. 
The reverse inequality $u_\epsilon(x) \geq u_\epsilon(\bar x)-\xi$ can
be shown by a symmetric argument. The proof is done. 
\end{proof}

\medskip

We now replace condition (\ref{ecpe1}) by the boundary game regularity
condition in the spirit of condition (\ref{gr1}). More precisely, we say that $x\in\partial\D$ is
game-regular when:
\begin{equation}\label{gr}
\forall \xi,\delta>0\quad\exists \hat\delta,
\hat\epsilon>0\quad\forall \epsilon<\hat\epsilon,~ \bar x\in B_{\hat\delta}(x)\cap\D \quad 
\exists \sigma_{I,0}\quad\forall\sigma_{II}\quad
\PP\big(X_\tau^{\epsilon, x,\sigma_{I,0}, \sigma_{II}}\not\in B_\delta(x)\big)\leq\xi.
\end{equation}

\begin{lemma}\label{gr_then_good}
If every boundary point $x\in\partial\D$ satisfies (\ref{gr}), then
(\ref{ecpe1}) holds for every bounded, uniformly continuous data function $F:\R^N\to \R$.
\end{lemma}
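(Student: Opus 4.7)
The plan is to combine the game-theoretic representation from Lemma \ref{lem_ue}, namely $u_\epsilon(x') = \sup_{\sigma_I}\inf_{\sigma_{II}}\mathbb{E}[F(X_\tau^{\epsilon, x', \sigma_I, \sigma_{II}})] = \inf_{\sigma_{II}}\sup_{\sigma_I}\mathbb{E}[F(X_\tau^{\epsilon, x', \sigma_I, \sigma_{II}})]$, with game-regularity (\ref{gr}) and the uniform continuity of $F$, to show $|u_\epsilon(x') - F(x_0)| \leq \xi$ whenever $x' \in \D$, $x_0 \in \partial\D$ and $|x'-x_0|$ is sufficiently small; since $u_\epsilon(x_0) = F(x_0)$ on $\R^N\setminus\D$, this is exactly (\ref{ecpe1}). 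Given $\xi>0$, I would first fix $\delta_F>0$ with $|F(y)-F(z)|<\xi/4$ for $|y-z|<\delta_F$, together with the probability threshold $\eta = \xi/(8\|F\|_{L^\infty}+1)$.

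Because (\ref{gr}) produces constants $\hat\delta_{x_0}, \hat\epsilon_{x_0}$ that a priori depend on $x_0$, the next step is a compactness argument. Applying (\ref{gr}) at every $x_0\in\partial\D$ with the parameters $(\min\{\delta_F/4,1\},\eta)$, the balls $\{B_{\hat\delta_{x_0}/2}(x_0)\}_{x_0\in\partial\D}$ cover the compact set $\partial\D$; a finite subcover centered at $x_1,\ldots,x_k$ yields the uniform constants $\delta_*\doteq \min_i \hat\delta_{x_i}/2$ and $\hat\epsilon \doteq \min_i \hat\epsilon_{x_i}$. For $x'\in\D$, $x_0\in\partial\D$ with $|x'-x_0|<\delta_*$, I would pick a covering index $i$ with $x_0\in B_{\hat\delta_{x_i}/2}(x_i)$, so that both $x'$ and $x_0$ lie within $\hat\delta_{x_i}$ of $x_i$ and $B_{\hat\delta_{x_i}/2}(x_i)\subset B_{\delta_F}(x_0)$. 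Applying (\ref{gr}) at $x_i$ furnishes a player-I strategy $\sigma_{I,0}$ obeying $\PP(X_\tau^{\epsilon, x', \sigma_{I,0}, \sigma_{II}} \notin B_{\delta_F}(x_0)) \leq \eta$ for every $\sigma_{II}$. Splitting the expectation across this event, then delivers $\mathbb{E}[F(X_\tau^{\epsilon, x', \sigma_{I,0}, \sigma_{II}})] \geq F(x_0) - \xi/4 - 2\|F\|_{L^\infty}\eta \geq F(x_0) - \xi/2$, and taking the infimum over $\sigma_{II}$ together with Lemma \ref{lem_ue} gives $u_\epsilon(x') \geq F(x_0) - \xi/2$.

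For the matching upper bound I would exploit the symmetry of the recursion (\ref{processMp}): since $s_n$ is uniform over $\{1,2\}$ and the two branches of the update are structurally identical, the law of $X_n^{\epsilon, x', \sigma_I, \sigma_{II}}$ is invariant under swapping $(\sigma_I,\sigma_{II})$. Setting $\sigma_{II,0}\doteq \sigma_{I,0}$ therefore produces a player-II keeping-close strategy satisfying the same probability bound with the roles of $\sigma_I$ and $\sigma_{II}$ interchanged, and the $\inf_{\sigma_{II}}\sup_{\sigma_I}$ representation from Lemma \ref{lem_ue} yields $u_\epsilon(x') \leq F(x_0) + \xi/2$. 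Combining the two bounds proves (\ref{ecpe1}) with $\delta=\delta_*$ and the uniform $\hat\epsilon$ above. The main obstacle I anticipate is the covering step: one has to commit to a sufficiently small $\hat\delta_{x_0}$ (bounded by $\delta_F/4$) at the very beginning, so that transferring game-regularity from the nominal boundary point $x_0$ to a nearby covering point $x_i$ preserves the inclusion $B_{\hat\delta_{x_i}/2}(x_i)\subset B_{\delta_F}(x_0)$ uniformly, and thus the strategy $\sigma_{I,0}$ provided at $x_i$ still controls the exit distribution relative to the original $x_0$.
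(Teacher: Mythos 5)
Your proposal is correct and follows essentially the same route as the paper: represent $u_\epsilon$ via Lemma~\ref{lem_ue}, pick a player-I strategy furnished by (\ref{gr}), split the expectation across the event $\{X_\tau\notin B_\delta(x_0)\}$, and invoke a symmetric argument for the matching upper bound. Two points are worth flagging.

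First, you insert a compactness argument on $\partial\D$ to upgrade the pointwise statement (\ref{gr}) to a uniform one; the paper simply writes ``By (\ref{gr}) there exists $\hat\delta,\hat\epsilon>0$ so that for all \dots $x\in\partial\D$ \dots'' as if the constants were already independent of $x$. In the paper's context this is harmless, because the proof of Theorem~\ref{main_equi_thm} in fact produces $\hat\delta,\hat\epsilon$ depending only on $\xi,\delta, N,\p,s$ and the external cone $C$, not on $x$; but since Lemma~\ref{gr_then_good} is stated purely in terms of (\ref{gr}), your explicit covering step closes a small logical gap. The bookkeeping you sketch (shrinking $\hat\delta_{x_i}$ below $\delta_F/4$, then transferring the confinement ball $B_\delta(x_i)$ into $B_{\delta_F}(x_0)$ by the triangle inequality) is correct.

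Second, your upper bound relies on the claim that the law of $X^{\epsilon,x',\sigma_I,\sigma_{II}}$ is invariant under swapping $(\sigma_I,\sigma_{II})$. This is morally right but slightly imprecise as stated: the strategies $\sigma_I^n,\sigma_{II}^n$ are functions on the history space $H_n$ which records the coin values $s_i\in\{1,2\}$, so a literal swap of the two families does not preserve the sample-path map. What does work is the measure-preserving involution $\phi$ on $\Omega$ that replaces every $s_i$ by $3-s_i$: one then sets $\sigma_{II,0}^n:=\sigma_{I,0}^n\circ\phi$ (precomposing with the coin-flip on histories), and one checks by induction that $X_n^{\epsilon,x',\sigma_I,\sigma_{II,0}}(\omega)=X_n^{\epsilon,x',\sigma_{I,0},\,\sigma_I\circ\phi}(\phi(\omega))$. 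Since $\phi$ preserves $\PP$ and $\sigma_I\mapsto\sigma_I\circ\phi$ is a bijection on strategies, the bound $\PP(X_\tau\notin B_{\delta_F}(x_0))\leq\eta$ carries over for every $\sigma_I$, and the $\inf_{\sigma_{II}}\sup_{\sigma_I}$ representation then yields $u_\epsilon(x')\leq F(x_0)+\xi/2$ as you assert. With that small precision added, the argument coincides with what the paper means by its ``symmetric argument.''
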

\begin{proof}
Fix $\xi>0$ and choose $\delta>0$ such that:
$$|F(x) - F(\bar x)|\leq\frac{\xi}{3}\qquad\mbox{for all }\; |x-\bar x|<\delta.$$
By (\ref{gr}) there exists $\hat\delta, \hat\epsilon>0$ so that for
all $\epsilon<\hat\epsilon$, $x\in\partial\D$ and $\bar x\in
B_{\hat\delta}(x)$ there exists $\sigma_{I,0}$ with:
$$\PP\big(X_\tau^{\epsilon, x,\sigma_{I,0}, \sigma_{II}}\not\in
B_\delta(x)\big)\leq\frac{\xi}{1+6\|F\|_{L^\infty}}\qquad\mbox{for all }\; \sigma_{II}.$$
Taking $\epsilon, x, \bar x$ as indicated, we obtain:
\begin{equation*}
\begin{split}
u_\epsilon(\bar x) - u_\epsilon(x) & = u^\epsilon_I(\bar x) - F(x) \geq
\inf_{\sigma_{II}}\mathbb{E}\big[ F\circ X_\tau^{\epsilon, x, \sigma_{I,0}, \sigma_{II}}
-F(x)\big] \\ & \geq \mathbb{E}\big[ F\circ X_\tau^{\epsilon, x, \sigma_{I,0}, \sigma_{II,0}} -F(x)\big]-\frac{\xi}{3}
\\ & = \int_{\{X_\tau\in B_\delta(x)\}}F(X_\tau)-F(x)\;\mbox{d}\PP + \int_{\{X_\tau\not\in B_\delta(x)\}}F(X_\tau)-F(x)\;\mbox{d}\PP
-\frac{\xi}{3} \\ & \geq -2\|F\|_{L^\infty}\cdot \PP(X_\tau\not\in
B_\delta(x)) - \frac{\xi}{3} - \frac{\xi}{3} \geq -\xi, 
\end{split}
\end{equation*}
where we used an almost-infimizing strategy $\sigma_{II,0}$. The
inequality $u_\epsilon(\bar x) - u_\epsilon(x)\leq \xi$ follows by a
symmetric argument. This ends the proof.
\end{proof}

\begin{proposition}\label{pomoc_gr}
Fix $\delta>0$, $k\geq 2$, $\epsilon<\frac{\delta}{k}$ and $|x|<\frac{\delta}{k}$.
Then there holds:
$$\forall \sigma_{I},~~\sigma_{II}\qquad
\mathbb{P}\big(|X^{\epsilon, x, \sigma_{I},  \sigma_{II}}_{\bar{\bar \tau}}|\geq
\delta\big)\leq\big(\frac{2}{k-1}\big)^{2s}\doteq a_k,$$ 
where we defined the stopping time: $\bar{\bar\tau}\doteq \min\big\{i\geq 0;~ |X_i|\geq
  \frac{\delta}{k}\big\}$. 
\end{proposition}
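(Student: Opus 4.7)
The plan is to exploit a ``single big jump'' principle: the heavy-tailed jump law forces that any excursion from $B_{\delta/k}(0)$ all the way out past $B_\delta(0)$ must occur in one large step. By direct integration, under the normalized measure on $T_\p^{1,\infty}$ the tail satisfies $\PP(|z_n| > M) = M^{-2s}$ for every $M \geq 1$ (using Lemma \ref{lem_Tp}(i) together with the definition of $\mu_s^N$). I will introduce two thresholds $N = 2\delta/(k\epsilon)$ and $M = (k-1)\delta/(k\epsilon)$, both strictly greater than $1$ thanks to the hypothesis $\epsilon < \delta/k$ and $k \geq 2$, and observe the clean identity $N/M = 2/(k-1)$.

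Next I will establish two deterministic containments on the jump magnitudes. On the event $\{\bar{\bar\tau} \geq n\}$, the positions $X_0, X_1, \ldots, X_{n-1}$ all lie in $B_{\delta/k}(0)$, so the triangle inequality gives $\epsilon |z_i| = |X_i - X_{i-1}| < 2\delta/k$, equivalently $|z_i| < N$, for each $i = 1, \ldots, n-1$. On the event $\{\bar{\bar\tau} = n,\ |X_n| \geq \delta\}$, the last jump must satisfy $\epsilon |z_n| \geq |X_n| - |X_{n-1}| > \delta - \delta/k$, so $|z_n| > M$. Note that both containments hold regardless of the rotations $R_{\sigma_I^{n-1}, e_1}, R_{\sigma_{II}^{n-1}, e_1}$, since these preserve $|z_n|$; this is exactly where the uniformity in $\sigma_I, \sigma_{II}$ comes from.

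I will then combine these inclusions with the probabilistic structure of Section \ref{sec_game}. The event $\{\bar{\bar\tau} \geq n\}$ is $\mathcal{F}_{n-1}$-measurable (since Borel strategies depend only on past history), and $z_n$ is independent of $\mathcal{F}_{n-1}$ with tail $M^{-2s}$; moreover the magnitudes $\{|z_i|\}_{i \geq 1}$ are i.i.d.\ under $\PP$. The first containment yields $\PP(\bar{\bar\tau} \geq n) \leq (1 - N^{-2s})^{n-1}$, while the second, together with independence, gives $\PP(\bar{\bar\tau} = n,\ |X_n| \geq \delta) \leq \PP(\bar{\bar\tau} \geq n) \cdot M^{-2s}$. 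Summing the resulting geometric series,
$\PP\big(|X_{\bar{\bar\tau}}| \geq \delta\big) \leq M^{-2s} \sum_{n \geq 1} (1 - N^{-2s})^{n-1} = M^{-2s} \cdot N^{2s} = (N/M)^{2s} = \big(2/(k-1)\big)^{2s} = a_k,$
which is the desired bound.

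There is no real obstacle in this argument; the only subtle bookkeeping is making sure that the ``small jumps'' threshold $N$ and the ``big final jump'' threshold $M$ are coupled in just the right way so that the ratio $N/M$ produces the constant $2/(k-1)$ and so that both thresholds exceed $1$ (which is where the assumption $\epsilon < \delta/k$ is used in a sharp form).
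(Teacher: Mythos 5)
Your proof is correct, and it reaches the sharp constant $a_k=\big(\tfrac{2}{k-1}\big)^{2s}$ by a genuinely different route than the paper. The paper's argument is a one-step conditional-ratio estimate: writing $\PP(|X_{\bar{\bar\tau}}|\geq\delta)$ as a sum over $\{\bar{\bar\tau}=n\}$ and conditioning on $\mathcal{F}_{n-1}$, it bounds each term by the worst-case ratio of the two cone-measures
$\mu_s^N\big((x'+T_\p^{\epsilon,\infty}(y))\setminus B_\delta(0)\big)\,/\,\mu_s^N\big((x'+T_\p^{\epsilon,\infty}(y))\setminus B_{\delta/k}(0)\big)$
over $|x'|<\delta/k$ and $|y|=1$; the implicit sum over $n$ then collapses because $\sum_n\PP(\bar{\bar\tau}=n)\leq 1$, with no geometric series needed. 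Your ``single big jump'' argument instead bounds the survival probability $\PP(\bar{\bar\tau}\geq n)$ geometrically via the i.i.d.\ magnitudes $|z_i|$, conditions independently on the final jump being large, and resums explicitly. Both proofs hinge on exactly the same geometric facts: the triangle-inequality estimates producing $a=\delta-\delta/k$ and $b=2\delta/k$ (your $M=a/\epsilon$, $N=b/\epsilon$), and the scaling $\mu_s^N(T_\p^{r,\infty})\propto r^{-2s}$; they therefore yield identical constants. Your version makes the dependence on the i.i.d.\ structure and on the $\mathcal{F}_n$-filtration more explicit, which is pedagogically transparent; the paper's version is more compact and, since it only compares conditional one-step probabilities, would also apply verbatim to any Markov chain with the same transition kernel even without the product-space construction.
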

\begin{proof}
Let $\delta,k,\epsilon,x$ be as in the statement of the result. It follows that:
\begin{equation*}
\begin{split}
&\mathbb{P}\big(|X_{\bar{\bar \tau}}|\geq\delta\big) \\ & \leq \sup\Big\{
\frac{\mu_s^N\big((x+T_\p^{\epsilon,\infty}(y))\setminus
  B_\delta(0)\big) + \mu_s^N\big((x+T_\p^{\epsilon,\infty}(\bar y))\setminus B_\delta(0)\big)}
{\mu_s^N\big((x+T_\p^{\epsilon,\infty}(y))\setminus
  B_{\delta/k}(0)\big) + \mu_s^N\big((x+T_\p^{\epsilon,\infty}(\bar y))\setminus B_{\delta/k}(0)\big)};
~ |x|<\frac{\delta}{k},~ |y|=|\bar y|=1\Big\} \\ &\leq
\sup\Big\{ \frac{\mu_s^N\big((x+T_\p^{\epsilon,\infty}(y))\setminus
  B_\delta(0)\big)}{\mu_s^N\big((x+T_\p^{\epsilon,\infty}(y))\setminus
  B_{\delta/k}(0)\big)}; ~ |x|<\frac{\delta}{k},~ |y|=1\Big\},
\end{split}
\end{equation*}
where we used the fact that $\frac{\alpha_1+\beta_1}{\alpha_2+\beta_2}\leq\max\big\{
\frac{\alpha_1}{\alpha_2}, \frac{\beta_1}{\beta_2}\big\}$. Further, denoting:
\begin{equation*}
a= \inf_{|x|<\delta/k}\mathrm{dist}(x,\R^N\setminus B_\delta(0)) =
\delta - \frac{\delta}{k},\qquad 
b= \sup_{|x|<\delta/k}\mathrm{dist}(x,\R^N\setminus B_{\delta/k}(0)) = 2 \frac{\delta}{k},
\end{equation*}
leads to:
\begin{equation*}
\mathbb{P}\big(|X_{\bar{\bar \tau}}|\geq\delta\big) \leq
\frac{\sup\Big\{ \mu_s^N\big((x+T_\p^{\epsilon,\infty}(y))\setminus
  B_\delta(0)\big);~ |x|<\frac{\delta}{k},~ |y|=1\Big\} }
{\inf\Big\{ \mu_s^N\big((x+T_\p^{\epsilon,\infty}(y))\setminus
  B_\delta(0)\big);~ |x|<\frac{\delta}{k},~ |y|=1\Big\}  }\leq 
\frac{\mu_s^N(T_\p^{a,\infty})}{\mu_s^N(T_\p^{b,\infty})}.
\end{equation*}
Since $\mu_s^N(T_\p^{a,\infty})= \frac{C(N,s) |A_\p|}{2s
  a^{2s}}$, we obtain that $\mathbb{P}\big(|X_{\bar{\bar
    \tau}}|\geq\delta\big)\leq \big(\frac{b}{a}\big)^{2s}$, as claimed.
\end{proof}

\medskip

Here is the main result of this section:

\begin{theorem}\label{main_equi_thm}
Let $\D\subset\R^N$ be an open, bounded domain satisfying the external
cone condition. Then (\ref{gr}) holds for every $x\in\partial\D$.
\end{theorem}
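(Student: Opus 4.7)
The approach is to amplify the fixed bound $\bar\theta<1$ from Theorem \ref{cone_thengood} to an arbitrarily small $\xi>0$ by iterating across a nested hierarchy of balls centered at $x$, with each intermediate overjump controlled by Proposition \ref{pomoc_gr}. This follows the Doob-type boundary regularity pattern, adapted to the discrete non-local jump process.

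Given $\xi,\delta>0$ and $x\in\partial\D$ with cone $C$, I would first extract the constant inner-to-outer ratio $\beta=d/((2d-1)^2+d)\in(0,1)$ from the proof of Theorem \ref{cone_thengood} (depending only on $N,\p,s$ and $C$), then fix integers $K,M\geq 2$ so that $\bar\theta^M+(M+1)(2/(K-1))^{2s}<\xi$. Define the geometrically growing radii $r_j=(\delta/K)(K/\beta)^{j-M}$ for $j=0,\ldots,M$, so that $r_M=\delta/K$, $r_{j+1}/r_j=K/\beta$, and (crucially) $\hat r_j=\beta r_j=Kr_{j-1}$. Theorem \ref{cone_thengood} then supplies, for each level $j$, a Borel strategy $\sigma_I^{(j)}$ that keeps the trajectory inside $B_{r_j}(x)$ until $\tau$ with probability at least $1-\bar\theta$, starting from any point in $B_{\hat r_j}(x)\cap\D\supset B_{r_{j-1}}(x)\cap\D$. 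Set $\hat\delta=r_0$ and $\hat\epsilon=\min_j\hat\epsilon_j$.

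For $\bar x\in B_{\hat\delta}(x)\cap\D$, assemble the global $\sigma_{I,0}$ by applying $\sigma_I^{(j)}$ whenever the current position lies in the shell $B_{\hat r_j}(x)\setminus B_{\hat r_{j-1}}(x)$, and introduce the stopping times $T_0=0$ and $T_{j+1}=\min\bigl(\tau,\min\{n>T_j:\;X_n\notin B_{r_{j+1}}(x)\}\bigr)$. The strong Markov property at each $T_j$, combined with Theorem \ref{cone_thengood}, gives $\mathbb{P}(T_{j+1}<\tau\mid\mathcal{F}_{T_j},\,X_{T_j}\in B_{\hat r_{j+1}}(x)\cap\D)\leq\bar\theta$, while Proposition \ref{pomoc_gr} (with its ``$\delta/k$'' equal to $r_{j+1}$ and its ``$\delta$'' equal to $Kr_{j+1}=\hat r_{j+2}$, so $k=K$) gives $\mathbb{P}(X_{T_{j+1}}\notin B_{\hat r_{j+2}}(x),\,T_{j+1}<\tau\mid\mathcal{F}_{T_j})\leq(2/(K-1))^{2s}$. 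Iterating these $M$ times (the first multiplicatively, the second via union bound) yields $\mathbb{P}(\exists n<\tau:\;X_n\notin B_{\delta/K}(x))\leq\bar\theta^M+M(2/(K-1))^{2s}$. On the complementary event (the trajectory stays inside $B_{\delta/K}(x)$ until $\tau$), the condition $X_\tau\notin B_\delta(x)$ forces $X_\tau$ to coincide with the first exit of the unstopped process from $B_{\delta/K}(x)$, so a final use of Proposition \ref{pomoc_gr} (with $k=K$) contributes another $(2/(K-1))^{2s}$. Summing,
\[
\mathbb{P}(X_\tau\notin B_\delta(x))\;\leq\;\bar\theta^M+(M+1)(2/(K-1))^{2s}\;<\;\xi,
\]
which is precisely (\ref{gr}).

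The main obstacle is making the Markov iteration fully rigorous. The patched $\sigma_{I,0}$ must be a Borel-measurable function of the full history defined on \emph{all} possible trajectories (including rare events where overjumps push $X_n$ outside the expected shells), and the strong Markov property must be invoked at the random times $T_j$ so as to faithfully reproduce Theorem \ref{cone_thengood}'s trajectory-escape bound from each new starting position. An additional subtlety arises from the discrete non-local character of the game, where a single jump can cross several nested balls so that several $T_j$'s may collapse; overjump events must be tracked via first exits of the unstopped process (exactly as Proposition \ref{pomoc_gr} does) and shown to aggregate only as $O(M(2/(K-1))^{2s})$ rather than compounding across levels.
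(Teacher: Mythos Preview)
Your proposal takes essentially the same approach as the paper: iterate Theorem \ref{cone_thengood} across a nested family of balls centered at $x$, control overjumps at each scale via Proposition \ref{pomoc_gr}, and concatenate the level-wise strategies at the successive exit times. The only difference is cosmetic bookkeeping---the paper first picks $k$ with $\bar\theta+a_k<1$ and bounds the escape probability multiplicatively by $(\bar\theta+a_k)^m$, whereas you separate the overjump events additively to obtain $\bar\theta^M+(M{+}1)a_K$; both quantities can be driven below $\xi$, and your explicit extraction of the fixed ratio $\beta=\hat\delta/\delta$ from the proof of Theorem \ref{cone_thengood} is exactly what underlies the paper's recursion $\delta_i=\hat\delta(\delta_{i-1})/k$.
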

\begin{proof}
{\bf 1} Fix $\xi,\delta>0$ and $x\in\partial\D$. Without loss of
generality $x=0$. Fix $k_0\geq 4$ such that
$a_{k_0}=\big(\frac{2}{k_0-1} )^{2s}<\frac{\xi}{2}$. It follows by
Proposition \ref{pomoc_gr} that for all $\epsilon<\frac{\delta}{k_0}$ we have:
\begin{equation}\label{ajeden}
\begin{split}
\PP \big(X_{\tau} \not \in B_\delta(0)\big)  & = \PP \Big(\big\{|X_{
    \tau}|\geq\delta\big\}\cap \big\{ \exists n<\tau \quad
|X_n|\in\big [\frac{\delta}{k},\delta\big )\big\} \Big) \\ & \quad
\qquad\qquad + \PP \Big(\big\{|X_{\tau}|\geq\delta\big\}\cap \big\{ \not\exists n<\tau \quad
|X_n|\in\big [\frac{\delta}{k},\delta\big )\big\} \Big) 
\\ & \leq \PP \Big(\exists n<\tau \quad |X_n|\geq\frac{\delta}{k_0}\Big) + \frac{\xi}{2}.
\end{split}
\end{equation}
Denote: $\epsilon_0 = \delta_1=\frac{\delta}{k_0}$. We now show that:
\begin{equation}\label{gr2}
\begin{split}
\exists \hat\delta<\delta_1,~~\hat\epsilon<\epsilon_0\quad\forall \epsilon<\hat\epsilon,~~
|x_0|<\hat\delta\quad\exists\sigma_{I,0}\quad\forall\sigma_{II}\qquad \PP\big(
\exists n<\tau \quad |X_n|\geq \delta_1\big) \leq \frac{\xi}{2}.
\end{split}
\end{equation}
Together with (\ref{ajeden}), (\ref{gr2}) will establish the result.

\smallskip

{\bf 2.} By Proposition \ref{pomoc_gr}, there exists $k\geq k_0$ such
that $\bar\theta + a_k<1$. Let $m\geq 2$ satisfy:
\begin{equation}\label{adwa}
\big(\bar\theta + a_k\big)^m\leq\frac{\xi}{2},
\end{equation}
where $\bar\theta$ is as in (\ref{gr1}).
We now define $\{\epsilon_i\}_{i=1}^m$, $\{\delta_i\}_{i=2}^m$, 
by applying Theorem \ref{cone_thengood} to $\delta_i$ in place of
$\delta$, recursively in:
\begin{equation}\label{set}
\epsilon_i = \min\{\epsilon_{i-1},\hat\epsilon(\delta_i)\},\quad 
\delta_i = \frac{\hat\delta(\delta_{i-1})}{k}.
\end{equation}
We also set:
$$\hat\epsilon\doteq \min\{\epsilon_m, \frac{\delta_m}{k}\},\quad \hat\delta \doteq\hat\delta(\delta_m),
\quad \mbox{ and }\quad  \kappa_i\doteq\min\{j\geq 0;~ |X_j|\geq \delta_i\} \quad \mbox{for all }\;
i=1\ldots m.$$
Given $\epsilon<\hat\epsilon$ and $|x_0|<\hat\delta$, define the strategy $\sigma_{I,0}$ as follows: 
\begin{itemize}[leftmargin=7mm]
\item For $j<\kappa_m$, we utilize the strategy $\sigma_{I,0,m}$ from
  (\ref{gr1}), chosen for the starting point $x_0$:
$$\sigma_{I,0}^{j}\big(x_0, (x_1,z_1,s_1),\ldots, (x_j, z_j,s_j)\big) =
\sigma_{I,0,m}^{j} \big(x_0, (x_1,z_1,s_1),\ldots, (x_j, z_j,s_j)\big). $$
\item If $|X_{\kappa_m}|\geq k\delta_m$, we keep the definition above for all $j\geq \kappa_m$.
\item  If $|X_{\kappa_m}|\in [\delta_m, k\delta_m)$, then for 
  $j\in[\kappa_m, \kappa_{m-1})$ we utilize the strategy $\sigma_{I,0,m-1}$ from
  (\ref{gr1}), chosen for the starting point $X_{\kappa_m}$:
$$\sigma_{I,0}^{j}\big(x_0, (x_1,z_1,s_1),\ldots, (x_j, z_j,s_j)\big) =
\sigma_{I,0,m-1}^{j-\kappa_m} \big(x_{\kappa_m}, (x_{\kappa_m+1},z_{\kappa_m+1},s_{\kappa_m+1}),\ldots, (x_j, z_j,s_j)\big). $$
\item Continue in this fashion, concatenating the strategies
  $\sigma_{I,0,i}$ for the remaining indices $i=m-2,\ldots, 1$. Each
  $\sigma_{I,0,i}$  is chosen from (\ref{gr1}) for the starting point $X_{\kappa_{i+1}}$.
\end{itemize}

\smallskip

By several applications of (\ref{gr1}) and Proposition \ref{pomoc_gr}, it follows that:
\begin{equation*}
\begin{split}
\PP \big(\exists n&<\tau \quad |X_n^{\epsilon, x_0, \sigma_{I,0},
  \sigma_{II}}|\geq\delta_1\big)  =  \PP \big(\kappa_1<\tau\big)  
\\ & = \PP \big(\kappa_2<\kappa_1<\tau \mbox{ and } |X_{\kappa_2}|\in [\delta_2, 4\delta_2)\big) 
+ \PP \big(\kappa_2\leq \kappa_1<\tau \mbox{ and } |X_{\kappa_2}|\geq 4\delta_2\big) 
\\ & \leq \bar\theta \cdot \PP \big(\kappa_2<\tau \mbox{ and } |X_{\kappa_2}|\in [\delta_2, 4\delta_2)\big) 
+ \PP \big(|X_{\kappa_2}|\geq 4\delta_2\big) \cdot \PP \big(\kappa_2<\tau\big) \\ & \leq
\big(\bar\theta + a_k\big) \PP (\kappa_2<\tau).
\end{split}
\end{equation*}
An iteration of the above argument and one final application of (\ref{gr1}) together with (\ref{adwa}) yield:
\begin{equation*}
\begin{split}
\PP \big(\exists n&<\tau \quad |X_n|\geq\delta_1\big)  \leq
\big(\bar\theta + a_k\big)^{m-1} \PP (\kappa_m<\tau) \leq \big(\bar\theta + a_k\big)^{m} 
\leq\frac{\xi}{2}.
\end{split}
\end{equation*}
Thus (\ref{gr2}) has been verified.
\end{proof}

\medskip

By Lemma \ref{gr_then_good}, Lemma \ref{bdary_enough} and invoking the
Ascoli-Arzel\`a theorem, we immediately get, in view of
Theorem \ref{th_viscosity} and Remark \ref{remi_visc}:

\begin{corollary}
Let $\D\subset\R^N$ be an open, bounded domain satisfying the external
cone condition. For a given bounded, uniformly continuous data
function $F:\R^N\to\R$, consider the family $\{u_\epsilon\}_{\epsilon\to 0}$ of solutions to \ref{dppe}.
Then, every sequence $\{u_\epsilon\}_{\epsilon\in J, \epsilon\to 0}$
has a further subsequence that converges uniformly as $\epsilon\to 0$,
to a viscosity solution of (\ref{nD}).
\end{corollary}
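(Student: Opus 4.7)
The plan is to assemble the corollary from the boundary-regularity chain already in place, followed by a compactness extraction and an appeal to the viscosity-limit theorem.

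First, I would invoke Theorem \ref{main_equi_thm} to conclude that every $x\in\partial\D$ satisfies the game-regularity condition (\ref{gr}). Lemma \ref{gr_then_good} then upgrades this to the weak boundary equicontinuity statement (\ref{ecpe1}), and Lemma \ref{bdary_enough} promotes (\ref{ecpe1}) to the full approximate equicontinuity (\ref{ecpe}) of the family $\{u_\epsilon\}_{\epsilon\to 0}$. At this stage one has: for every $\xi>0$ there exist $\hat\epsilon(\xi),\delta(\xi)>0$ such that $|u_\epsilon(x)-u_\epsilon(\bar x)|\le\xi$ whenever $\epsilon<\hat\epsilon(\xi)$ and $|x-\bar x|<\delta(\xi)$, together with the uniform bound $\|u_\epsilon\|_{L^\infty}\le\|F\|_{L^\infty}$ provided by Theorem \ref{th_exists}.

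Second, given a sequence $\{u_{\epsilon_n}\}_{n\to\infty}$ with $\epsilon_n\to 0$, I would perform a standard diagonal extraction: fix $\xi_k=2^{-k}$ and discard finitely many indices so that $\epsilon_n<\hat\epsilon(\xi_k)$ for $n\ge n_k$, then on each compact cover of $\R^N$ by $\delta(\xi_k)$-balls apply the Ascoli--Arzel\`a theorem to obtain a subsubsequence converging uniformly on compacts up to error $\xi_k$, and diagonalize. Because $u_\epsilon\equiv F$ on $\R^N\setminus\D$ with $F$ uniformly continuous and bounded, and $\D$ is bounded, the compact-convergence statement upgrades to uniform convergence on all of $\R^N$ toward some $u\in C(\R^N)$ with $u=F$ outside $\D$.

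Finally, Theorem \ref{th_viscosity} applied to this uniform limit yields, for every $x\in\D$ and every admissible $C^2$ test function $\phi$ touching $u$ strictly from above (resp.\ below) on $\bar B_r(x)\setminus\{x\}$, the inequality $\mathcal{L}_{s,\p}[\tilde\phi](x)\le 0$ (resp.\ $\ge 0$). By Remark \ref{remi_visc}, which identifies $\mathcal{L}_{s,\p}$ with a strictly positive multiple of $\Delta_\p^s$, these are precisely the viscosity sub- and supersolution conditions for $\Delta_\p^s u=0$ in $\D$, and combined with the boundary identification $u=F$ in $\R^N\setminus\D$ they say that $u$ is a viscosity solution of (\ref{nD}). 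The only point demanding minor care is the diagonal extraction, since the scales $\hat\epsilon(\xi),\delta(\xi)$ from approximate equicontinuity depend on $\xi$ rather than being uniform; otherwise the argument is purely a bookkeeping combination of Theorems \ref{th_exists}, \ref{th_viscosity}, \ref{main_equi_thm} and Lemmas \ref{bdary_enough}, \ref{gr_then_good}.
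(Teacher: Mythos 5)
Your proposal follows the paper's proof exactly: game-regularity (Theorem \ref{main_equi_thm}) yields (\ref{ecpe1}) via Lemma \ref{gr_then_good}, then (\ref{ecpe}) via Lemma \ref{bdary_enough}; together with the $L^\infty$ bound from Theorem \ref{th_exists} the Ascoli--Arzel\`a theorem produces a uniformly convergent subsequence, and Theorem \ref{th_viscosity} with Remark \ref{remi_visc} identifies the limit as a viscosity solution of (\ref{nD}). One small slip: you have the sign conclusions of Theorem \ref{th_viscosity} reversed---the case $\phi>u$ on $\bar B_r(x)\setminus\{x\}$ gives $\mathcal{L}_{s,\p}[\tilde\phi](x)\geq 0$, not $\leq 0$---but as you invoke both inequalities this does not affect the final conclusion.
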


\appendix

\section{Case $p_x=0$ and uniqueness of viscosity solutions}\label{appendix}

In this section we extend Definition \ref{def_operators} to the case
of $u\in C^{1,1}(x)$ with $p_x=0$. We then prove uniqueness of
viscosity solutions to (\ref{nD}) under the more restrictive condition
that admits test functions $\phi$ satisfying $\nabla \phi(x)=0$.

\begin{definition}\label{def_operator2}
Given an exponent $\p\in [2,\infty)$, and a bounded, Borel function $u:\R^N\to\R$ such that $u\in
C^{1,1}(x)$, we define:
$$\tLs (x)  \doteq \sup_{|y|=1}\inf_{|\tilde
  y|=1}\int_{T_\p^{0,\infty}(y)}\Lp(x,z,R_{\tilde y, y}z)\dmN.$$
\end{definition}
Note that when $p_x=0$, the above expression is well posed by (\ref{C2}). We also
observe the following easy counterpart of the estimate (\ref{tre}):
\begin{equation*}
\begin{split}
\big|\Lse (x) - \tLs (x)\big|& \leq \sup_{|y|=|\tilde y|=1}\int_{T_\p^{0,\epsilon}(y)}\big|\Lp(x,z,R_{\tilde y, y}z)\big|\dmN
\\ & \leq 2C_x\int_{T_\p^{0,\epsilon}}|z|^2 \dmN = C(N,s) |A_\p|\cdot C_x \frac{\epsilon^{2-2s}}{1-s}.
\end{split}
\end{equation*}
On the other hand, for $p_x\neq 0$ the values of the two operators coincide:
\begin{proposition}\label{Ltilde}
We have $\Ls (x) = \tLs (x)$, whenever both sides are defined.
\end{proposition}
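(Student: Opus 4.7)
The plan is to first derive, for each $\epsilon>0$ and each pair of unit vectors $y,\tilde y\in\R^N$, the identity
\begin{equation*}
\int_{T_\p^{0,\infty}(y)} L_u(x,z,R_{\tilde y,y}z)\dmN = F_\epsilon(y) + F_\epsilon(-\tilde y) + E_\epsilon(y,\tilde y),
\end{equation*}
where $F_\epsilon(y)\doteq\int_{T_\p^{\epsilon,\infty}(y)}(u(x+z)-u(x))\dmN$, and $E_\epsilon(y,\tilde y)$ denotes the $|z|<\epsilon$ portion of the integral. To derive it, I split the integration at $|z|=\epsilon$ and, on the outer piece, perform the successive changes of variable $z\mapsto R_{\tilde y,y}z$ (mapping $T_\p^{\epsilon,\infty}(y)$ onto $T_\p^{\epsilon,\infty}(\tilde y)$) and $z\mapsto -z$ (mapping $T_\p^{\epsilon,\infty}(\tilde y)$ onto $T_\p^{\epsilon,\infty}(-\tilde y)$) inside the summand containing $u(x-R_{\tilde y,y}z)$, each preserving $\dmN$. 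For the inner piece, (\ref{C2}) combined with $\int_{T_\p^{0,\epsilon}(y)}z\dmN = c_0\,y$ and $c_0=O(\epsilon^{2-2s})$ yields $|E_\epsilon(y,\tilde y)|\le C\epsilon^{2-2s}$ uniformly, with $C$ depending only on $|p_x|$, $C_x$, $N$, $s$, $\p$.

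Since $F_\epsilon(y)$ and $F_\epsilon(-\tilde y)$ depend on independent variables and $|E_\epsilon|=O(\epsilon^{2-2s})$ uniformly, the sup-inf decouples, giving
\begin{equation*}
\tLs(x) = \sup_{|y|=1}F_\epsilon(y)+\inf_{|\tilde y|=1}F_\epsilon(-\tilde y)+O(\epsilon^{2-2s}) = \Lse(x)+O(\epsilon^{2-2s}).
\end{equation*}
Evaluating the identity at $y=\tilde y=p_x/|p_x|$ (where $R_{y,y}=Id_N$) yields
\begin{equation*}
\Ls(x) = F_\epsilon(\tfrac{p_x}{|p_x|})+F_\epsilon(-\tfrac{p_x}{|p_x|})+O(\epsilon^{2-2s}).
\end{equation*}
Subtracting these and using $\Lse(x)=\sup F_\epsilon+\inf F_\epsilon$, the $\epsilon$-independent quantity $\tLs(x)-\Ls(x)$ equals
\begin{equation*}
\bigl[\sup_{|y|=1}F_\epsilon(y)-F_\epsilon(\tfrac{p_x}{|p_x|})\bigr]+\bigl[\inf_{|y|=1}F_\epsilon(y)-F_\epsilon(-\tfrac{p_x}{|p_x|})\bigr]+O(\epsilon^{2-2s}),
\end{equation*}
so the proof reduces to showing that both non-negative/non-positive brackets vanish as $\epsilon\to 0$.

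For the sup bracket (the inf is symmetric), fix an auxiliary scale $r\in(0,r_x]$ and decompose
\begin{equation*}
F_\epsilon(y)=c_{\epsilon,r}\langle p_x,y\rangle+G^{(1)}_{\epsilon,r}(y)+G^{(2)}_r(y),
\end{equation*}
where $c_{\epsilon,r}=\int_{T_\p^{\epsilon,r}}\langle z,e_1\rangle\dmN$ diverges as $\epsilon\to 0$ (since $s>\tfrac12$), $G^{(1)}_{\epsilon,r}(y)=\int_{T_\p^{\epsilon,r}(y)}(u(x+z)-u(x)-\langle p_x,z\rangle)\dmN$ is bounded by $C_1C_xr^{2-2s}$ uniformly in $\epsilon,y$ via (\ref{C2}), and $G^{(2)}_r(y)=\int_{T_\p^{r,\infty}(y)}(u(x+z)-u(x))\dmN$ is $\epsilon$-independent and continuous in $y$ by the symmetric-difference estimate from the proof of Lemma \ref{conti} (applied with the fixed cutoff $r$ in place of $\epsilon$). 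Let $y_\epsilon^\ast\in S^{N-1}$ maximize $F_\epsilon$. Using $|p_x|-\langle p_x,y\rangle=\tfrac{|p_x|}{2}|y-p_x/|p_x||^2$ together with $F_\epsilon(y_\epsilon^\ast)\ge F_\epsilon(p_x/|p_x|)$ gives
\begin{equation*}
\tfrac{|p_x|}{2}c_{\epsilon,r}\bigl|y_\epsilon^\ast-\tfrac{p_x}{|p_x|}\bigr|^2 \le 2C_1C_xr^{2-2s}+2\|G^{(2)}_r\|_\infty,
\end{equation*}
so $y_\epsilon^\ast\to p_x/|p_x|$ as $\epsilon\to 0$, for each fixed $r$. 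Hence
\begin{equation*}
0\le\sup F_\epsilon-F_\epsilon(\tfrac{p_x}{|p_x|})\le\bigl[G^{(1)}_{\epsilon,r}(y_\epsilon^\ast)-G^{(1)}_{\epsilon,r}(\tfrac{p_x}{|p_x|})\bigr]+\bigl[G^{(2)}_r(y_\epsilon^\ast)-G^{(2)}_r(\tfrac{p_x}{|p_x|})\bigr];
\end{equation*}
the second bracket $\to 0$ by continuity of $G^{(2)}_r$, the first is bounded by $2C_1C_xr^{2-2s}$. Passing $\epsilon\to 0$ and then $r\to 0$ drives the limit to $0$.

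The main obstacle is precisely this last step: direct equicontinuity of $F_\epsilon$ in $y$ fails because the coefficient $c_{\epsilon,r}$ in front of the linear part blows up as $\epsilon\to 0$. The resolution is to turn this divergence into a restoring force that concentrates the extremizers at $\pm p_x/|p_x|$, after splitting the residue into a uniformly bounded $\epsilon$-independent continuous tail $G^{(2)}_r$ (where $y$-continuity takes over once concentration has occurred) and a small piece $G^{(1)}_{\epsilon,r}$ of size $O(r^{2-2s})$ which disappears in the final $r\to 0$ limit.
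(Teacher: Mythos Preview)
Your decomposition $\int_{T_\p^{0,\infty}(y)}L_u(x,z,R_{\tilde y,y}z)\dmN = F_\epsilon(y)+F_\epsilon(-\tilde y)+E_\epsilon(y,\tilde y)$ is correct as a formal identity, but the bound $|E_\epsilon(y,\tilde y)|\le C\epsilon^{2-2s}$ is false. You claim $\int_{T_\p^{0,\epsilon}(y)}z\dmN = c_0\,y$ with $c_0=O(\epsilon^{2-2s})$, but in fact
\[
c_0 \;=\; \int_{T_\p^{0,\epsilon}}\langle z,e_1\rangle\dmN
\;=\; C(N,s)\int_{A_\p}\langle\omega,e_1\rangle\,\mathrm{d}\sigma(\omega)\cdot\int_0^\epsilon t^{-2s}\,\mathrm{d}t \;=\; +\infty
\]
since $s>\tfrac12$. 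You recognise this yourself later, when you note that $c_{\epsilon,r}=\int_{T_\p^{\epsilon,r}}\langle z,e_1\rangle\dmN$ diverges as $\epsilon\to 0$; the integral from $0$ to $\epsilon$ diverges for the same reason. Consequently the linear contribution to $E_\epsilon(y,\tilde y)$, namely $c_0\langle p_x,\,y-\tilde y\rangle$, is $\pm\infty$ whenever $\langle p_x,y\rangle\ne\langle p_x,\tilde y\rangle$, and your uniform estimate on $E_\epsilon$ collapses. Without it, the decoupling $\sup_y\inf_{\tilde y}[\cdots]=\sup F_\epsilon+\inf F_\epsilon+O(\epsilon^{2-2s})$ in step~3 is unjustified, and the whole route from $\tLs$ to $\Lse$ is severed.

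What survives of your argument (steps 4--6) is a correct, self-contained proof that $\Lse(x)\to\Ls(x)$ as $\epsilon\to 0$; but that is essentially the content of Lemma~\ref{prop3}, not Proposition~\ref{Ltilde}. The paper's proof takes the opposite tack: rather than suppressing the divergence of the first moment, it \emph{exploits} it. Working directly with almost-extremizers $y_\delta,\tilde y_\delta$ of $\tLs$, the paper shows that the bound $|\tLs(x)|\le C$ forces the divergent quantity $\int_{T_\p^{0,r_x}(y_\delta)}\langle p_x,(Id_N-R_{p_x/|p_x|,\,y_\delta})z\rangle\dmN$ to be finite, which can only happen when $y_\delta=p_x/|p_x|$ (and likewise $\tilde y_\delta=p_x/|p_x|$). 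This is exactly the ``restoring force'' idea you articulate in your final paragraph, but applied at the level of the full integral defining $\tLs$ rather than to the truncated functionals $F_\epsilon$.
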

\begin{proof}
For $\delta>0$, let $|y_\delta|=1$ satisfy: $\tLs(x) \leq
\inf_{|\tilde y|=1}\int_{T_\p^{0,\infty}(y_\delta)}\Lp(x,z, R_{\tilde
y, y_\delta}z)\dmN + \delta$. Denote $C=C(C_x, r_x, N, \p,s,\|u\|_{L^\infty}) =
2C_x\int_{T_\p^{0,r_x}}|z|^2\dmN + 4\int_{T_\p^{r_x,\infty}}\|u\|_{L^\infty}\dmN$. Then:
\begin{equation*}
\begin{split}
\tLs (x) & \leq \int_{T_\p^{0,\infty}(y_\delta)}\Lp(x,z,R_{\frac{p_x}{|p_x|},  y_\delta}z)\dmN +\delta
\\ & \leq \int_{T_\p^{0,r_x}(y_\delta)}\big\langle p_x,\big(Id_N - R_{\frac{p_x}{|p_x|},  y_\delta}\big)z\big\rangle
\dmN + C +\delta.
\end{split}
\end{equation*}
Similarly, by (\ref{C2}) it follows that:
\begin{equation}\label{grap}
\tLs (x)\geq \sup_{|y|=1}\inf_{|\tilde y|=1}\int_{T_\p^{0,r_x}(y)}\big\langle p_x,\big(Id_N -
R_{\tilde y, y}\big)z\big\rangle \dmN - C=-C,
\end{equation}
since the first term in the above right hand side equals $0$. We hence arrive at: 
$$ -2 C-\delta\leq \int_{T_\p^{0,r_x}(y_\delta)}\big\langle p_x,\big(Id_N -
R_{\frac{p_x}{|p_x|}, y_\delta}\big)z\big\rangle \dmN
= \left\{\begin{array}{ll} 0 & \mbox{for }  \; y_\delta\neq\frac{p_x}{|p_x|}\\
-\infty & \mbox{otherwise,}  \end{array}\right. $$
so there must be $y_\delta = \frac{p_x}{|p_x|}$. Thus $\tLs(x) \leq
\inf_{|\tilde y|=1}\int_{T_\p^{0,\infty}(\frac{p_x}{|p_x|})}\Lp\big(x,z, R_{\tilde y, \frac{p_x}{|p_x|}}z\big)\dmN$,
and this last inequality is in fact equality in view of the definition of $\tilde{\mathcal{L}}_{s,\p}$.

Let now $|\tilde y_\delta|=1$ be such that: $ \tLs(x) \geq
\int_{T_\p^{0,\infty}(\frac{p_x}{|p_x|})}\Lp\big(x,z, R_{
\tilde y_\delta, \frac{p_x}{|p_x|}}z\big)\dmN - \delta$. As in
(\ref{grap}), there holds: $\tLs(x)\leq C$ so we conclude that:
$$ 2C + \delta\geq \int_{T_\p^{0,r_x}(\frac{p_x}{|p_x|})}\big\langle p_x,\big(Id_N -
R_{\tilde y_\delta, \frac{p_x}{|p_x|}}\big)z\big\rangle \dmN
= \left\{\begin{array}{ll} 0 & \mbox{for }  \; \tilde y_\delta\neq\frac{p_x}{|p_x|}\\
+\infty & \mbox{otherwise.}  \end{array}\right. $$
Thus $\tilde y_\delta=\frac{p_x}{|p_x|}$ for
all $\delta>0$, and $\tLs(x) = \int_{T_\p^{0,\infty}(\frac{p_x}{|p_x|})}\Lp(x,z, z)\dmN$ as claimed.
\end{proof}

Recall that viscosity solutions to $\tilde{\mathcal{L}}_{s,\p}[u]=0 $ are defined as in Remark \ref{remi_visc},
but without requesting that $\nabla \phi(x)\neq 0$. More precisely, we say that a bounded and uniformly
continuous function $u:\R^N\to \R$ is a viscosity
subsolution, provided that for every $x\in \D$, $r>0$ and every
$\phi\in C^2(\R^N)$, conditions $\phi>u$ on $\bar B_r(x)\setminus \{0\}$
and $\phi(x) = u(x)$, imply that:
$$\tilde{\mathcal{L}}_{s,\p}\big[\one_{\bar B_r(x)}\cdot \phi +
\one_{\R^N\setminus\bar B_r(x)}\cdot u \big](x)\geq 0.$$
When $-u$ is a viscosity subsolution, then $u$ is said to be a viscosity supersolution.
The main result of this section is the following comparison principle:
\begin{theorem}\label{th_compar}
Let $v,w:\R^N\to \R$ be two uniformly continuous, bounded
functions, and let $\D\subset \R^N$ be an open, bounded domain. Assume
that $v$ is a viscosity subsolution while $w$ is a viscosity
supersolution to $\tilde{\mathcal{L}}_{s,\p}[u]=0$ in $\D$. Then 
$v\leq w$ in $\R^N\setminus\D$ implies $v\leq w$ in $\D$.
\end{theorem}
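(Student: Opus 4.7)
The strategy is to argue by contradiction via the classical doubling-of-variables method, adapted to the non-local $\sup$-$\inf$ structure of the operator. Assume $M \doteq \sup_{\R^N}(v-w) > 0$; the hypothesis $v \leq w$ on $\R^N \setminus \D$ together with uniform continuity and boundedness ensures the supremum is attained at some $\hat x \in \D$. For small parameters $\varepsilon, \xi > 0$, I would introduce
\begin{equation*}
\Phi_{\varepsilon, \xi}(x, y) = v(x) - w(y) - \frac{|x-y|^2}{2\varepsilon} - \xi\bar\psi(x, y),
\end{equation*}
where $\bar\psi \geq 0$ is a smooth, bounded penalty vanishing only at $(\hat x, \hat x)$ (e.g.\ $\bar\psi(x,y) = \tanh(|x-\hat x|^2) + \tanh(|y-\hat x|^2)$), ensuring existence of a strict maximizer $(x_\varepsilon, y_\varepsilon)$. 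Standard estimates give $x_\varepsilon, y_\varepsilon \in \D$ for $\varepsilon$ small, $x_\varepsilon, y_\varepsilon \to \hat x$, and $|x_\varepsilon - y_\varepsilon|^2/\varepsilon \to 0$ as $\varepsilon \to 0$. The test functions
\begin{equation*}
\phi(x) = v(x_\varepsilon) + \frac{|x-y_\varepsilon|^2 - |x_\varepsilon - y_\varepsilon|^2}{2\varepsilon} + \xi\big[\bar\psi(x,y_\varepsilon) - \bar\psi(x_\varepsilon,y_\varepsilon)\big],
\end{equation*}
and analogously $\psi$ at $y_\varepsilon$, touch $v$ from above and $w$ from below at the respective points (a small quartic perturbation $\zeta|\cdot - x_\varepsilon|^4$, $\zeta \to 0^+$ taken last, ensures strict touching). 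Their gradients at the touching points agree up to $O(\xi)$ with $p_\varepsilon \doteq (x_\varepsilon - y_\varepsilon)/\varepsilon$, which may vanish; this is exactly why the extended operator $\tilde{\mathcal{L}}_{s,\p}$ of Definition \ref{def_operator2}, admitting $\nabla\phi(x) = 0$, is required here.

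The viscosity sub/supersolution conditions, applied with $\tilde\phi = \one_{\bar B_r(x_\varepsilon)}\phi + \one_{\R^N \setminus \bar B_r(x_\varepsilon)} v$ and the analogous $\tilde\psi$, yield $\tilde{\mathcal{L}}_{s,\p}[\tilde\phi](x_\varepsilon) \geq 0$ and $\tilde{\mathcal{L}}_{s,\p}[\tilde\psi](y_\varepsilon) \leq 0$. For any $\delta > 0$, I would perform a $\sup$-$\inf$ matching: first select $y_*$ almost-realizing the outer supremum in the subsolution inequality for $\tilde\phi$ at $x_\varepsilon$, so that $\int_{T_\p^{0,\infty}(y_*)} L_{\tilde\phi}(x_\varepsilon, z, R_{\tilde y, y_*}z)\dmN \geq -\delta$ for every $\tilde y$; then for this $y_*$ select $\tilde y_*$ almost-realizing the inner infimum in the supersolution inequality for $\tilde\psi$ at $y_\varepsilon$, so that $\int_{T_\p^{0,\infty}(y_*)} L_{\tilde\psi}(y_\varepsilon, z, R_{\tilde y_*, y_*}z)\dmN \leq \delta$. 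Subtracting gives
\begin{equation*}
\int_{T_\p^{0,\infty}(y_*)} \big[L_{\tilde\phi}(x_\varepsilon, z, R_{\tilde y_*, y_*}z) - L_{\tilde\psi}(y_\varepsilon, z, R_{\tilde y_*, y_*}z)\big]\dmN \geq -2\delta.
\end{equation*}

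The crux, and main obstacle, is showing this integral is simultaneously bounded above by a strictly negative constant independent of $\delta$, for $\varepsilon, \xi$ small. Splitting the integration into near ($|z| < r$) and far ($|z| \geq r$) regions, the near part is controlled via Taylor expansion of the smooth $\phi, \psi$ and is of order $r^{2-2s}/\varepsilon$; choosing $r = \varepsilon^\alpha$ with $\alpha > 1/(2-2s)$ (feasible since $s > 1/2$) makes this vanish as $\varepsilon \to 0$. The far integrand reduces to $[v(x_\varepsilon+z) - w(y_\varepsilon+z)] + [v(x_\varepsilon - Rz) - w(y_\varepsilon - Rz)] - 2[v(x_\varepsilon) - w(y_\varepsilon)]$ with $R = R_{\tilde y_*, y_*}$, and by the maximality of $\Phi_{\varepsilon, \xi}$ each bracket is bounded above by the $\xi$-penalty excess, hence by a uniform $O(\xi)$ since $\bar\psi$ is bounded. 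The quantitative improvement comes from exploiting $v(\hat x) - w(\hat x) = M > 0$ together with uniform continuity: choose $\eta > 0$ so that $(v-w)(z') < M - \gamma$ whenever $|z' - \hat x| \geq \eta$, for some $\gamma > 0$; on the fixed annulus $\{\eta \leq |z| \leq 2\eta\} \cap T_\p^{0,\infty}(y_*)$ each bracket is then bounded above by $-\gamma/2 + O(\xi) + o(1)_{\varepsilon\to 0}$. The far-region contribution is therefore a fixed strictly negative constant dominating the vanishing near-region contribution. Sending first $\delta \to 0$, then $\varepsilon \to 0$, and finally $\xi \to 0$, the desired contradiction ensues.
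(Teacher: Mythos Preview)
Your doubling-of-variables outline is natural, but the penalty term $\xi\bar\psi$ introduces a fatal gradient mismatch that the non-local operator cannot tolerate. With the penalty present, $\nabla\phi(x_\varepsilon)$ and $\nabla\psi(y_\varepsilon)$ differ by an $O(\xi)$ vector. In the near region $T_\p^{0,r}(y_*)$, the second-order difference $L_{\tilde\phi}(x_\varepsilon,z,Rz)-L_{\tilde\psi}(y_\varepsilon,z,Rz)$ therefore carries a linear term $\langle\nabla\phi(x_\varepsilon)-\nabla\psi(y_\varepsilon),\,z-Rz\rangle$; since $\int_{T_\p^{0,r}}|z|\,\mathrm{d}\mu_s^N = +\infty$ for every $s>\tfrac12$, your claimed bound ``of order $r^{2-2s}/\varepsilon$'' fails unless $R=Id$ \emph{and} the gradients coincide exactly. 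The same mismatch spoils the $\sup$--$\inf$ selection: when $y_*$ is aligned with $\nabla\phi(x_\varepsilon)/|\nabla\phi(x_\varepsilon)|$ but not with $\nabla\psi(y_\varepsilon)/|\nabla\psi(y_\varepsilon)|$, Proposition~\ref{Ltilde}'s computation shows $\inf_{\tilde y}\int_{T_\p(y_*)}L_{\tilde\psi}(y_\varepsilon,z,R_{\tilde y,y_*}z)\,\mathrm{d}\mu_s^N=-\infty$, so the two individual integrals you subtract are $+\infty$ and $-\infty$ and the ``subtracting gives $\geq -2\delta$'' step is ill-posed. Finally, even the far-region estimate breaks: your $O(\xi)$ bound on each bracket is multiplied by $\mu_s^N(T_\p^{r,\infty})\sim r^{-2s}=\varepsilon^{-2s\alpha}\to\infty$, so it cannot be dominated by the fixed annulus contribution when you send $\varepsilon\to 0$ before $\xi\to 0$.

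The paper's proof sidesteps all of this by working with sup- and inf-convolutions $v^\epsilon$, $w_\epsilon$ and looking at the maximum of $v^\epsilon-w_\epsilon$ at a \emph{single} point $\bar x$; the touching paraboloids there share the same gradient $p_{\bar x}$, so no mismatch ever appears. The strict negativity is then extracted not from any local strict-maximum structure of $v-w$ (which need not hold), but from the region $\{|z|>\mathrm{diam}\,\D\}$, where $\bar x+z\notin\D$ forces $w_\epsilon-v^\epsilon+\delta\geq \tfrac{c}{2}$ for small $\epsilon$, yielding a fixed positive lower bound $c\cdot\mu_s^N(T_\p^{\mathrm{diam}\D,\infty})$ that contradicts the $o(1)$ upper bound. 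Your scheme can be repaired along these lines: drop the penalty (attainment of the max of $\Phi_\varepsilon$ already follows from $v\leq w$ on $\R^N\setminus\D$ and boundedness of $\D$), so that the gradients of $\phi$ and $\psi$ match \emph{exactly} at $p_\varepsilon=(x_\varepsilon-y_\varepsilon)/\varepsilon$; then obtain the quantitative gap from the exterior of $\D$ rather than from a presumed isolated maximiser $\hat x$.
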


From Proposition \ref{Ltilde} there immediately follows:
\begin{corollary}\label{unique}
Given a bounded and uniformly continuous datum $F:\R^N\to\R$, 
there exists at most one viscosity solution $u$ to the Dirichlet problem:
$$\tilde{\mathcal{L}}_{s,\p}[u]=0 \; \mbox{ in }\;\D, \; \qquad u=F \; \mbox{ in }\;\R^N\setminus\D.$$
\end{corollary}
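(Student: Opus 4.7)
The plan is to argue by contradiction via Ishii's doubling of variables, adapted to the non-local operator $\tilde{\mathcal{L}}_{s,\p}$. Assume $M:=\sup_{\R^N}(v-w)>0$; since $v\leq w$ outside $\D$ and both functions are bounded and uniformly continuous, this supremum is attained at some $\bar x\in\D$. I double the variables by introducing $\Phi_\epsilon(x,y)=v(x)-w(y)-|x-y|^2/\epsilon^2$ on $\R^N\times\R^N$. Boundedness of $v,w$ together with $v\leq w$ outside $\D$ provides a maximizer $(x_\epsilon,y_\epsilon)$ (located, for $\epsilon$ small, in a compact region around $\bar x$), and the standard reasoning yields $M_\epsilon:=\Phi_\epsilon(x_\epsilon,y_\epsilon)\searrow M$, $|x_\epsilon-y_\epsilon|^2/\epsilon^2\to 0$, and, along a subsequence, $x_\epsilon,y_\epsilon\to\bar x$; in particular $x_\epsilon,y_\epsilon\in\D$ for all small $\epsilon$.

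For such $\epsilon$ I build the quadratic test functions
\begin{equation*}
\phi(x)=v(x_\epsilon)+\frac{|x-y_\epsilon|^2-|x_\epsilon-y_\epsilon|^2}{\epsilon^2},\qquad \psi(y)=w(y_\epsilon)-\frac{|x_\epsilon-y|^2-|x_\epsilon-y_\epsilon|^2}{\epsilon^2},
\end{equation*}
touching $v$ from above at $x_\epsilon$ and $w$ from below at $y_\epsilon$, respectively. Strict contact on each punctured ball $\bar B_r(\cdot)\setminus\{\cdot\}$ is ensured by setting $\phi_\eta=\phi+\eta|x-x_\epsilon|^4$ and $\psi_\eta=\psi-\eta|y-y_\epsilon|^4$ with $\eta>0$, where $r>0$ is for now arbitrary and will be tuned. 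The viscosity conditions of this appendix---whose key feature is that they apply even when $\nabla\phi(x)=0$ at the contact point---give $\tilde{\mathcal{L}}_{s,\p}[\tilde\phi_{\eta,r}](x_\epsilon)\geq 0\geq\tilde{\mathcal{L}}_{s,\p}[\tilde\psi_{\eta,r}](y_\epsilon)$. Choosing an almost-maximizer $y^*\in S^{N-1}$ of the outer $\sup$ in the first operator and an almost-minimizer $\tilde y^{**}\in S^{N-1}$ of the inner $\inf$ in the second operator at the same direction $y^*$, up to an error $\delta>0$, I reduce the subtraction to the single integral estimate
\begin{equation*}
0\leq\int_{T_\p^{0,\infty}(y^*)}\Bigl(L_{\tilde\phi_{\eta,r}}(x_\epsilon,z,Rz)-L_{\tilde\psi_{\eta,r}}(y_\epsilon,z,Rz)\Bigr)\dmN+2\delta,\qquad R=R_{\tilde y^{**},y^*}.
\end{equation*}

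I then estimate the integrand region by region. On $|z|<r$, a direct expansion using the quadratic form of $\phi,\psi$ together with the isometry $|Rz|=|z|$ cancels the linear cross-terms $\langle x_\epsilon-y_\epsilon,z-Rz\rangle/\epsilon^2$ between $\phi_\eta$ and $\psi_\eta$; the remaining integrand equals $4|z|^2/\epsilon^2+4\eta|z|^4$, which by Lemma \ref{lem_Tp}(i) integrates to $\tfrac{4C(N,s)|A_\p|}{2-2s}r^{2-2s}/\epsilon^2+O(\eta)$. On $|z|>r$ both glued functions revert to $v,w$, and the max property of $\Phi_\epsilon$ applied at the translation-invariant shifts $(x_\epsilon+z,y_\epsilon+z)$ and $(x_\epsilon-Rz,y_\epsilon-Rz)$ (whose penalties both equal $|x_\epsilon-y_\epsilon|^2/\epsilon^2$) delivers the pointwise bound $L_v(x_\epsilon,z,Rz)\leq L_w(y_\epsilon,z,Rz)$, so the intermediate region is harmless. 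Finally, choosing $R>0$ so large that $x_\epsilon+z$ and $x_\epsilon-Rz$ lie outside $\bar\D$ for all $|z|\geq R$ (uniformly in small $\epsilon$, since $\D$ is bounded), the hypothesis $v\leq w$ on $\R^N\setminus\D$ combined with uniform continuity of $w$ yields $L_v(x_\epsilon,z,Rz)-L_w(y_\epsilon,z,Rz)\leq 2\omega_w(|x_\epsilon-y_\epsilon|)-2M_\epsilon\leq -M$ for $\epsilon$ small; integrated, this contributes $-MC(N,s)|A_\p|/(2sR^{2s})$.

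Sending $\delta\to 0$ and then $\eta\to 0$, the three contributions combine into
\begin{equation*}
0\leq \frac{4C(N,s)|A_\p|}{2-2s}\cdot\frac{r^{2-2s}}{\epsilon^2}-\frac{MC(N,s)|A_\p|}{2sR^{2s}}.
\end{equation*}
Selecting $r=\epsilon^a$ with any $a>1/(1-s)$ forces $r^{2-2s}/\epsilon^2\to 0$ as $\epsilon\to 0$, yielding the desired contradiction. The main obstacle this plan must resolve is exactly the singular local ``Hessian'' contribution of order $r^{2-2s}/\epsilon^2$ generated by the quadratic penalty, which at first glance prevents a direct comparison of the two viscosity inequalities; the mechanism that rescues the argument is the arbitrariness of the test radius $r$ in the stronger viscosity formulation of this appendix, which permits the scaling $r\ll\epsilon^{1/(1-s)}$ that absorbs the local singularity into the strictly negative non-local contribution arising from $v\leq w$ outside $\D$.
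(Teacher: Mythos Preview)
Your argument is correct and is essentially a self-contained proof of the comparison principle (Theorem~\ref{th_compar}), from which the corollary follows at once. The paper itself gives no separate argument for Corollary~\ref{unique}; it is stated as an immediate consequence of Theorem~\ref{th_compar}.

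The route you take to the comparison principle, however, is genuinely different from the paper's. The paper regularises via sup/inf convolutions $v^\epsilon$, $w_\epsilon$, exploits that both are $C^{1,1}$ at the maximum point $\bar x$ of their difference with a \emph{common} gradient $p_{\bar x}$ (coming from the touching paraboloids), and then compares $\tilde{\mathcal L}_{s,\p}[v^\epsilon](\bar x)$ and $\tilde{\mathcal L}_{s,\p}[w_\epsilon](\bar x)$ directly. The strictly negative contribution is extracted from the region $|z|>\mathrm{diam}\,\D$, where $w_\epsilon-v^\epsilon+\delta\geq c/2$, and the local error is controlled by the specific choice $r_1=\epsilon^{1+1/(2-2s)}$ in the construction of the test function $\phi$. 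Your approach instead uses Ishii doubling: the translation-invariance of the penalty $|x-y|^2/\epsilon^2$ gives both the cancellation of the cross-terms $\langle x_\epsilon-y_\epsilon,z-Rz\rangle/\epsilon^2$ in the near region and the pointwise inequality $L_v(x_\epsilon,z,Rz)\leq L_w(y_\epsilon,z,Rz)$ in the intermediate region, while the freedom to pick the test radius $r=\epsilon^a$ with $a>1/(1-s)$ absorbs the local $r^{2-2s}/\epsilon^2$ term. Both methods ultimately rely on the same non-local mechanism---a fixed strictly negative mass coming from $v\leq w$ outside $\D$---but your doubling argument is somewhat more streamlined in that it avoids the two-step passage through regularised functions and the delicate construction of $\phi$ in (\ref{grro1}). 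A small notational remark: you use $R$ both for the rotation $R_{\tilde y^{**},y^*}$ and for the outer truncation radius; this should be disambiguated.
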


\medskip

\noindent {\bf Proof of Theorem \ref{th_compar}}

{\bf 1.} For every $\epsilon>0$, we define the sup- and
inf-convolution functions $v^\epsilon, w_\epsilon:\R^N\to \R$ by:
$$v^\epsilon(x) \doteq \sup_{y\in\R^N}\Big\{v(y) + \epsilon -
\frac{|y-x|^2}{\epsilon}\Big\}, \quad w_\epsilon(x) \doteq
-(-w)^\epsilon(x) = \inf_{y\in\R^N}\Big\{w(y) - \epsilon + \frac{|y-x|^2}{\epsilon}\Big\}.$$
Clearly: $v^\epsilon(x) \geq v(x) + \epsilon$. As shown in
\cite{CC}, the continuous functions $\{v^\epsilon\}_{\epsilon\to 0}$ form a nonincreasing sequence
converging uniformly to $v$. We recall that for every $x\in\R^N$, $\epsilon>0$ there exists
a point $x_\epsilon^*\in \R^N$ such that:
\begin{equation} \label{grro} 
v^\epsilon(x) = v(x_\epsilon^*) + \epsilon - \frac{|x-x_\epsilon^*|^2}{\epsilon}.
\end{equation}
Thus indeed $v^\epsilon\searrow v$ uniformly, because $0<v^\epsilon(x) - v(x)\leq v(x_\epsilon^*) -
v(x) + \epsilon$ and:
\begin{equation}\label{grro0}
|x-x_\epsilon^*|^2 = \epsilon\big(v(x_\epsilon^*)+\epsilon -
v^\epsilon(x)\big) \leq \epsilon\big(v(x_\epsilon^*)-v(x)\big) \leq
2\epsilon \|v\|_{L^\infty},
\end{equation}
and because $v$ is uniformly continuous. Similar assertions are valid
for the sequence $w_\epsilon\nearrow w$.

\smallskip

{\bf 2.} Suppose that, contrary to the statement of the theorem, $v(x_0) - w(x_0)=c>0$ for
some $x_0\in\D$. For $\epsilon\ll 1$ each continuous function $v^\epsilon-w_\epsilon$
attains then its maximum value $\delta_\epsilon\geq c$ at some
$\bar x(\epsilon)\in\D$. We will show that for sufficiently small
$\epsilon$ this brings a contradiction.

We fix $\epsilon\ll 1$ and write $\bar x =\bar x(\epsilon)$, $\delta=\delta_\epsilon$. 
Then $v^\epsilon(\bar x) = w_\epsilon(\bar x)+\delta$ and
$v^\epsilon\leq w_\epsilon+\delta$ in $\R^N$. Consider the paraboloids
$P_v, P_w:\R^N\to\R$ given by:
$$P_v(x) \doteq v(\bar x_\epsilon^*)+\epsilon - \frac{|x-\bar
  x_\epsilon^*|^2}{\epsilon}, \qquad P_w(x) \doteq w(\bar{\bar
  x}_\epsilon^*)+\epsilon - \frac{|x-\bar{\bar x}_\epsilon^*|^2}{\epsilon} + \delta,$$ 
where ${\bar x}_\epsilon^*,\bar{\bar x}_\epsilon^*$ are as in
(\ref{grro}) in correspondence to $\bar x, v$ (for $\bar
x_\epsilon^*$), and to $\bar x, w$ (for $\bar{\bar
x}_\epsilon^*$), respectively. These paraboloids have the property that $P_v\leq v^\epsilon$ and
$P_w\geq w_\epsilon+\delta$ in $\R^N$, whereas $P_v(\bar x) =
v^\epsilon(\bar x)$ and $P_w(\bar x) = w_\epsilon(\bar x)+\delta$. It
follows that $v^\epsilon$ and $w_\epsilon$ are both
$C^{1,1}(\bar x)$ with the same gradient:
$$p_{\bar x} \doteq \nabla P_v(\bar x) = \nabla P_w(\bar x).$$

Define now a $C^2$ function $\phi:\bar B_1(\bar x)\to \R$ with the properties:
\begin{equation}\label{grro1}
\begin{split}
& \phi(x) = P_w(x)+\epsilon|x-\bar x|^2\qquad\qquad \qquad\quad \qquad \mbox{ for }\; |x-\bar
x|\leq r_1\doteq \epsilon^{1+\frac{1}{2-2s}}\\
& \phi(x) \in\big(v^\epsilon(x), v^\epsilon(x)+2\max_{\bar B_{r_1}(\bar
  x)}(P_w-P_v)\big) \qquad \,\mbox{ for }\; |x-\bar x|\in (r_1,1).
\end{split}
\end{equation}
The translated function $\displaystyle \psi(x) \doteq \phi(x+\bar x-\bar
x_\epsilon^*)-\epsilon+\frac{|\bar x-\bar x_\epsilon^*|^2}{\epsilon}$ then satisfies:
\begin{equation*}
\begin{split}
& \psi(\bar x_\epsilon^*)  = P_w(\bar x) - \epsilon +
\frac{|\bar x-\bar x^*_\epsilon|^2}{\epsilon} = v^\epsilon(\bar
x) -\epsilon+ \frac{|\bar x-\bar x^*_\epsilon|^2}{\epsilon} = v(\bar x^*_\epsilon),\\
& \psi(x) = P_w(x+\bar x-\bar x_\epsilon^*) +\epsilon|x-\bar x_\epsilon^*|^2-\epsilon +
\frac{|\bar x-\bar x^*_\epsilon|^2}{\epsilon} > w_\epsilon(x+\bar
x-\bar x_\epsilon^*) +\delta -\epsilon+ \frac{|\bar x-\bar
  x^*_\epsilon|^2}{\epsilon} \\ & \qquad \geq
v^\epsilon(x+\bar x-\bar x_\epsilon^*) -\epsilon+ \frac{|\bar x-\bar x^*_\epsilon|^2}{\epsilon}
\geq v(x)\qquad\mbox{ for all } \; x\in \bar B_1(\bar x_\epsilon^*).
\end{split}
\end{equation*}
Hence, $\psi$ can be used in the definition of the viscosity subsolution,
as supporting $v$ from above at $\bar x_\epsilon^*\in\D$ and relative to the radius $r=1$.
By assumption, this implies:
$$\tilde{\mathcal{L}}_{s,\p}\Big[\one_{\bar
  B_1(\bar x_\epsilon^*)}\cdot\phi + \one_{\R^N\setminus \bar B_1(\bar
  x_\epsilon^*)}\cdot v\Big](\bar x_\epsilon^*)\geq 0.$$

\smallskip

{\bf 3.} Through a straightforward change of variables, the above inequality becomes:
\begin{equation}\label{grro2}
\begin{split}
\sup_{|y|=1}\inf_{|\tilde y|=1}\Big(&\int_{T_\p^{0,1}(y)}\phi(\bar
x+z) + \phi(\bar x- R_{y,\tilde y}z) - 2v^\epsilon(\bar x)\dmN \\ &
+ \int_{T_\p^{1,\infty}(y)} v(\bar x_\epsilon^*+z) + v(\bar x_\epsilon^* -
R_{y,\tilde y}z) - 2v^\epsilon(\bar x)\dmN \Big) \geq 0.
\end{split}
\end{equation}
From (\ref{grro1}) and (\ref{grro0}) we observe that:
\begin{equation*}
\begin{split}
& \phi(x) - v^\epsilon(x) \leq P_w(x)-P_v(x) + \epsilon|x-\bar x|^2
\leq \big(\frac{2}{\epsilon} + \epsilon\big)|x-\bar x|^2\leq
\frac{3}{\epsilon}|x-\bar x|^2\quad \mbox{for }\; |x-\bar x|\leq r_1,\\
& \phi(x) - v^\epsilon(x) \leq 2\max_{\bar B_{r_1}(\bar
  x)}(P_w-P_v)\leq \frac{4}{\epsilon} r_1^2\qquad
\quad\mbox{for }\; |x-\bar x|\in (r_1,1],\\
& v(x) - v^\epsilon(x+\bar x-\bar x_\epsilon^*)\leq v(x) - v(x-(\bar
x_\epsilon^* - \bar x))\leq \omega_v(2\epsilon\|v\|_{L^\infty}) \quad \mbox{for all }\; x\in\R^N,
\end{split}
\end{equation*}
where $\omega_v$ denotes the modulus of continuity of $v$.
Replacing $\phi$ and $v$ in (\ref{grro2}) by $v^\epsilon$ implies:
\begin{equation*}
\begin{split}
0 & \leq \tilde{\mathcal{L}}_{s,\p}\big[ v^\epsilon\big](\bar x) +
\int_{T_\p^{0,r_1}}\frac{6}{\epsilon}|z|^2\dmN
+\frac{8}{\epsilon} r_1^2 \cdot \mu_s^N(T_\p^{r_1,1}) + 2 \omega_v(2\epsilon\|v\|_{L^\infty}) 
\cdot \mu_s^N(T_\p^{1,\infty}) \\ & \leq \tilde{\mathcal{L}}_{s,\p}\big[ v^\epsilon\big](\bar x) +
C(N,s) \Big(\frac{3}{\epsilon}\frac{r_1^{2-2s}}{1-s} +
\frac{4}{\epsilon}r_1^2\cdot \frac{r_1^{-2s}-1}{2s} + \frac{\omega_v(2\epsilon\|v\|_{L^\infty})}{s}\Big)\\
& = \tilde{\mathcal{L}}_{s,\p}\big[ v^\epsilon\big](\bar x) +
C(N,s) \Big(4\big(\frac{1}{1-s}+\frac{1}{s}\big)\epsilon^{2-2s} +
\frac{4}{s}\epsilon^{1+\frac{1}{1-s}} + \frac{\omega_v(2\epsilon\|v\|_{L^\infty})}{s}\Big).
\end{split}
\end{equation*}
Applying the same reasoning above to the approximation $w_\epsilon$, we can conclude that:
\begin{equation*}
\begin{split}
0 & \geq \tilde{\mathcal{L}}_{s,\p}\big[ w_\epsilon+\delta\big](\bar x) -
C(N,s) \Big(4\big(\frac{1}{1-s}+\frac{1}{s}\big)\epsilon^{2-2s} +
\frac{4}{s}\epsilon^{1+\frac{1}{1-s}} + \frac{\omega_w(2\epsilon\|w\|_{L^\infty})}{s}\Big).
\end{split}
\end{equation*}
In particular, there further follows:
\begin{equation}\label{grro3}
\tilde{\mathcal{L}}_{s,\p}\big[ w_\epsilon+\delta\big](\bar x) 
- \tilde{\mathcal{L}}_{s,\p}\big[ v^\epsilon\big](\bar x)\to 0 \quad \mbox{ as }\; \epsilon\to 0.
\end{equation}

On the other hand, one directly sees that:
\begin{equation*}
\begin{split}
\tilde{\mathcal{L}}_{s,\p}&\big[w_\epsilon+\delta\big](\bar x) - \tilde{\mathcal{L}}_{s,\p}\big[v^\epsilon\big](\bar x) \geq
\inf_{|y|=|\tilde y|=1} \int_{T_\p^{0,\infty}(y)}
L_{w_\epsilon-v^\epsilon +\delta}\big(\bar x, z, R_{\tilde y,
  y}z\big)\dmN \\ & \geq 2 \inf_{|y|=1}\int_{T_\p^{0,\infty}(y)}
\big(w_\epsilon-v^\epsilon+\delta\big)(\bar x+z)\dmN \\ & \geq 
2 \big(c+\inf_{\R^N\setminus\D}(w_\epsilon - v^\epsilon)\big)\cdot\mu_s^N\big(T_\p^{\mathrm{diam}\D,\infty}\big)
\geq c \cdot \mu_s^N\big(T_\p^{\mathrm{diam}\D,\infty}\big),
\end{split}
\end{equation*}
in contradiction with (\ref{grro3}). The last inequality displayed above holds for $\epsilon>0$ sufficiently small to
guarantee that $\inf_{\R^N\setminus\D}(w_\epsilon -
v^\epsilon)\geq-\frac{c}{2}$, in view of the assumption $w-v\geq 0$ on $\R^N\setminus\D$.
The proof is complete.
\endproof

\end{document}